\newcommand{\A}{\mathcal{A}}
\newcommand{\C}{\mathbb{C}}
\newcommand{\E}{\mathbb{E}}
\newcommand{\R}{\mathbb{R}}
\newcommand{\N}{\mathbb{N}}
\newcommand{\Z}{\mathbb{Z}}
\let\epsilon\varepsilon
\providecommand{\norm}[1]{\lvert\lvert #1 \rvert\rvert}
\newtheorem{theorem}{Theorem}[section]
\newtheorem{lemma}[theorem]{Lemma}
\newtheorem{prop}[theorem]{Proposition}
\newtheorem{cor}[theorem]{Corollary}
\theoremstyle{definition}
\newtheorem{definition}[theorem]{Definition}
\newtheorem{example}[theorem]{Example}
\theoremstyle{remark}
\newtheorem{remark}[theorem]{Remark}
\numberwithin{equation}{section}
\newcommand{\abs}[1]{\lvert#1\rvert}
\newcommand{\floor}[1]{\lfloor#1\rfloor}
\newcommand{\ceil}[1]{\lceil#1\rceil}
\begin{document}

\title[]{Lyapunov exponents of the spectral cocycle for topological factors of bijective substitutions on two letters}

\author{Juan Marshall-Maldonado}
\address{Juan Marshall-Maldonado, Department of Mathematics,
	Bar-Ilan University, Ramat-Gan, Israel}
\email{jgmarshall21@gmail.com}




\keywords{Spectrum, substitutions, exponential sums, Lyapunov exponents, symbolic dynamics, twisted cocycle}

\begin{abstract}
The present paper explores the spectral cocycle defined in \cite{bufetov2020spectral} in the special case of bijective substitutions on two letters, the most prominent example being the Thue-Morse substitution. We derive an explicit subexponential behaviour of the deviations from the expected exponential behavior. Moreover, these sharp bounds will be exploited to prove that the top Lyapunov exponent is greater or equal to the top exponent of the subshift topological factors after a renormalization. In order to obtain such results for the substitutive subshift factors, we define a special kind of sum, which is a multiple version of the twisted Birkhoff sum. For the particular case of the Thue-Morse substitution, we derive that the exponent is zero, we give an explicit subexponential bound for the twisted Birkhoff sums and we do the same for subshift topological factors.
\end{abstract}
\maketitle

\section{Introduction}
This paper continues the work of \cite{bufetov2014modulus,bufetov2020spectral} around the spectral cocycle defined explicitly in \cite{bufetov2020spectral}. The study of this extension of the Rauzy-Veech cocycle has given fruitful results as effective rates of weakly mixing for several classes of systems such as interval exchange transformations \cite{avila2023quantitative}, linear flows on translations surfaces of genera higher or equal than 2 \cite{forni2022twisted} and special suspension flows over more general S-adic systems (including substitutions) \cite{bufetov2018holder,bufetov2020spectral,JEP_2021__8__279_0}. Other consequences are the nature of the spectral measures involved, specifically, sufficient and necessary conditions for purely singular spectrum \cite{berlinkov2019singular,bufetov2020spectral,bufetov2022substitution, baake2019renormalisation}.\\

The results of this work are first steps towards a general understanding of the behavior of the top Lyapunov exponent (and more generally, the whole Lyapunov spectrum) of the spectral cocycle under conjugacy and semiconjugacy, which is a completely open question: subshift topological factors may be associated to more complex substitutions, in particular the cocycle matrices may be of higher dimension. Therefore, it is not clear that the estimates on the matrices are transfered to the factors: consider the next example (from \cite{dekking2014structure}). The Thue-Morse substitution is given by $\zeta_{TM}: 0 \mapsto 01, 1\mapsto 10$. A conjugate substitution (see subsection \ref{conjugacy} for background) $\sigma:\A \longrightarrow \A^+$ on the alphabet $\A=\{w_1,w_2,\dots,w_{12}\}$ is given by
\begin{align*}
&\sigma(w_1) = w_4w_{10}& &\sigma(w_2) = w_4w_{10}& &\sigma(w_3) = w_5w_{11}& &\sigma(w_4) = w_5w_{11}\\
&\sigma(w_5) = w_6w_{12}& &\sigma(w_6) = w_6w_{12}& &\sigma(w_7) = w_7w_1& &\sigma(w_8) = w_7w_1\\
&\sigma(w_9) = w_8w_2& &\sigma(w_{10}) = w_8w_2& &\sigma(w_{11}) = w_9w_3& &\sigma(w_{12}) = w_9w_3.
\end{align*}
Consider the subshift $X_\sigma\subset\A^\Z$ associated to $\sigma$ (see subsection \ref{background}), and $\delta$ the Kronecker delta. Our results show in particular that all the sums of the form
\[
\sum_{k=0}^{2^{n}-1} \delta_{x_k,a}\, e^{2\pi i k\omega}, \quad a\in\A
\] 
have a subexponential behavior for any $\mathbf{x} = (x_n)_{n\in\Z}\in X_\sigma$ and almost every $\omega\in[0,1)$. This is just one example (of infinitely many) constructed with the classical method introduced by M. Queff\'elec (see \cite{queffelec2010substitution}).\\

We expect that the methods introduced in this work pave the way for obtaining similar results in a much larger class of substitutions, for example, substitutions of constant length. We emphasize the fact that there is no systematic method to calculate the Lyapunov exponents in the general case. Some particular cases where this is possible are the case of bijective abelian substitutions (see \cite{baake2019renormalisation}) and constant length substitutions on two letters (see \cite{manibo2017lyapunov,BaakeBinary}).\\

For the time being, we have restricted our attention to bijective subtitutions on two letters, the Thue-Morse substitution being the most prominent example. In this case, we obtain some rigidity on the behavior of the top Lyapunov exponent with respect to the subshift factors, stated in Theorem \ref{main}. A special property of bijective substitutions on two letters we take advantage of is that we may represent its twisted Birkhoff sums as a Riesz product. Namely, for the Thue-Morse substitution, the twisted Birkhof sum over a fixed point may be expressed as
\[
p_n(\omega) = \prod_{j=0}^{n-1} 2 \sin(\pi\{\omega2^j\}).
\]

A lot of attention has been given to the understanding of the asymptotics of $L^p$ norms of $p_n$: part of the proof of a prime number theorem for the sum-of-digits function, due to Maduit and Rivat in \cite{mauduit2010probleme}, uses the asymptotic
\[
\norm{p_n}_1 \sim 2^{n\delta}, \text{ with } \delta = 0.40325...
\]
A survey with the link between these products and the Thue-Morse sequence is found in \cite{queffelec2018questions}. We will be interested in the asymptotics of $p_n$ for fixed $\omega$ as $n$ goes to infinity. Apparently contradictory with the latter result, the growth is subexponential for generic $\omega$. The next result answers a question implicitly asked in \cite{BaakeBinary}.
\begin{theorem}\label{subexp}
	There exists a positive constant $B$ such that for almost all $\omega$, there is a positive integer $n_0(\omega)$ such that for all $n\geq n_0(\omega)$,
	\[
	\max (p_n(\omega),p_n^{-1}(\omega)) \leq e^{B\sqrt{n\log\log(n)}}.
	\]
\end{theorem}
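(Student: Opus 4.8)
The plan is to pass to logarithms and prove a one-sided law of the iterated logarithm. Let $T\colon x\mapsto 2x$ be the doubling map of $\T=\R/\Z$ and $f(x):=\log\abs{2\sin(\pi x)}$, so that $\log p_n(\omega)=\sum_{j=0}^{n-1}f(T^j\omega)=:S_n(\omega)$ and $\max(p_n(\omega),p_n^{-1}(\omega))=e^{\abs{S_n(\omega)}}$; it suffices to show that for a.e.\ $\omega$ one has $\abs{S_n(\omega)}\le B\sqrt{n\log\log n}$ for all large $n$. Three features of $f$ are used: (i) $\int_\T f=0$, from the classical expansion $f(x)=-\sum_{k\ge 1}k^{-1}\cos(2\pi kx)$ — this is why the growth is sub-exponential rather than exponential, the exponential size of $\norm{p_n}_1$ being carried by the set where $S_n\gtrsim n$, of exponentially small measure; (ii) $f\in L^p(\T)$ for every $p<\infty$, with $\abs{\{f<-t\}}\asymp e^{-t}$, i.e.\ exponentially light lower tails; (iii) the identity $f(x/2)+f((x+1)/2)=\log\abs{4\sin(\tfrac{\pi x}2)\cos(\tfrac{\pi x}2)}=f(x)$, i.e.\ $f$ is an eigenfunction with eigenvalue $\tfrac12$ of the Perron--Frobenius operator $\mathcal P g(x)=\tfrac12\big(g(x/2)+g((x+1)/2)\big)$ of $T$. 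By (iii), along the decreasing filtration $\mathcal G_j:=\sigma(T^j\omega)$ one has $\E[f\circ T^j\mid\mathcal G_{j+1}]=\tfrac12\,f\circ T^{j+1}$, so $d_j:=f\circ T^j-\tfrac12\,f\circ T^{j+1}$ is a reverse martingale difference sequence and telescoping gives the \emph{exact} decomposition
\[
S_n(\omega)=2\,M_n(\omega)+f(T^n\omega)-f(\omega),\qquad M_n:=\sum_{j=0}^{n-1}d_j,
\]
turning the dependent Birkhoff sum into a martingale plus a boundary term.

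Next I would prove a small-denominator lemma: for every $\epsilon>0$ and a.e.\ $\omega$ there is $N_0(\omega)$ with $\norm{T^j\omega}_\T\ge n^{-1-\epsilon}$ for all $0\le j\le n$ and all $n\ge N_0(\omega)$. Indeed $\{\omega:\norm{T^j\omega}_\T<n^{-1-\epsilon}\text{ for some }j\le n\}$ has measure $\le 2(n+1)n^{-1-\epsilon}$, which is summable along $n=2^\ell$, so the claim follows from Borel--Cantelli and monotonicity across the blocks $[2^\ell,2^{\ell+1})$. Using $\sin(\pi t)\ge 2\norm{t}_\T$ and the trivial bound $f\le\log 2$, the lemma gives $\abs{f(T^j\omega)}\le(1+\epsilon)\log n+C$ for all $j\le n$ and large $n$; hence $\abs{f(T^n\omega)-f(\omega)}=O(\log n)=o(\sqrt{n\log\log n})$ (the boundary term is negligible) and $\max_{j<n}\abs{d_j(\omega)}\le C'\log n$ (the martingale $M$ has only logarithmic increments on the relevant range), for a.e.\ $\omega$ and all large $n$.

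The heart is then a concentration estimate for $M_n$ on the dyadic blocks $n\in[2^\ell,2^{\ell+1})$. Two ingredients. First, the predictable quadratic variation is linear: $\langle M\rangle_n=\sum_{j=0}^{n-1}\E[d_j^2\mid\mathcal G_{j+1}]=\sum_{m=1}^{n}\Phi(T^m\omega)$ with $\Phi\ge 0$ and $\int_\T\Phi=\E[d_0^2]=:v<\infty$ (finite since $f\in L^2$), so Birkhoff's theorem gives $\langle M\rangle_n\le 2vn$ for a.e.\ $\omega$ eventually. Second, a Bernstein--Freedman inequality for reverse martingales with bounded increments: the probability that $\abs{M_n}\ge t$ for some $n\le N$ while $\langle M\rangle_N\le V$ and $\max_{j<N}\abs{d_j}\le D$ is $\le 2\exp\!\bigl(-\tfrac{t^2}{2V+2Dt}\bigr)$. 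On block $\ell$ take $N=2^{\ell+1}$, $t=B\sqrt{2^\ell\log\ell}$, $V\asymp 2^\ell$, $D\asymp\ell$; then $V+Dt=O(2^\ell)$ and the failure probability is $\lesssim\exp(-cB^2\log\ell)=\ell^{-cB^2}$, summable once $B$ is large. Borel--Cantelli, together with the two a.e.-eventual events above, yields $\max_{n<2^{\ell+1}}\abs{M_n}\le B\sqrt{2^\ell\log\ell}$ for all large $\ell$, whence $\abs{S_n(\omega)}\le 2B\sqrt{2^\ell\log\ell}+O(\log n)\le B'\sqrt{n\log\log n}$ for all large $n$ (using $2^\ell\le n$ and $\log\ell\asymp\log\log n$ on the block). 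The iterated logarithm is exactly the cost of summing the geometric series of block lengths.

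The main difficulty is the unboundedness of $f$: no off-the-shelf law of the iterated logarithm for H\"older or bounded-variation observables over an expanding map applies, and a naive estimate of $S_n$ by $\sum_{j<n}\abs{f(T^j\omega)}$ only gives $O(n)$. The scheme above trades this for a careful reverse-martingale Bernstein bound and the routine computation $v=\E[d_0^2]<\infty$: the exact decomposition reduces matters to the martingale $M_n$ (for which concentration tools are available) and a single boundary term, while the small-denominator lemma makes that term and the increments $d_j$ only logarithmically large. A variant, cleaner for the upper tail alone and yielding the optimal constant, is to treat $\{p_n\text{ large}\}$ directly via the positive-exponent transfer operator $\mathcal L_s g(x)=\tfrac12\big(\abs{2\sin\tfrac{\pi x}2}^s g(\tfrac x2)+\abs{2\cos\tfrac{\pi x}2}^s g(\tfrac{x+1}2)\big)$, whose leading eigenvalue $\lambda(s)$ satisfies $(\log\lambda)'(0)=\int_\T f=0$ and $(\log\lambda)''(0)=\sigma^2$, so that $\log\E[e^{sS_n}]=\tfrac12\sigma^2 n s^2+O(ns^3)$ reproduces the scale $\sqrt{n\log\log n}$ with optimal $B=\sqrt2\,\sigma$; the lower tail $\{p_n^{-1}\text{ large}\}$ would still need the truncation argument, since $\E[p_n^{-1}]=\infty$ and $\mathcal L_s$ is unbounded for $s<0$.
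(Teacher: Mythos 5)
Your proof is correct in its essentials but takes a genuinely different and more self-contained route than the paper. The paper's proof is a short black-box application: it writes $f(x)=\log(2\sin\pi x)$, checks $f$ lies in $\mathrm{Mon}_2(2,\mathrm{Leb})$ (mean zero, $L^2$, finite sum of monotone pieces), and then directly invokes the bounded law of the iterated logarithm of Dedecker--Gou\"ezel--Merlev\`ede for uniformly expanding maps (stated here as Theorem \ref{thmGouezel}), finishing by exponentiating. You instead prove a LIL from scratch by exploiting a structure that the paper does not use: the identity $\mathcal{P}f=\tfrac12 f$ for the Perron--Frobenius operator of the doubling map, which turns $S_n$ into an \emph{exact} reverse-martingale-plus-coboundary decomposition $S_n=2M_n+f(T^n\omega)-f(\omega)$ (no Gordin-type approximation needed). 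You then control the unboundedness of $f$ by a small-denominator Borel--Cantelli lemma giving $|f(T^j\omega)|=O(\log n)$ for $j\le n$ a.e.-eventually, get linear growth of $\langle M\rangle_n$ from Birkhoff applied to the a.e.-defined conditional-variance observable $\Phi\in L^1$, and conclude by a Freedman--Bernstein bound for reverse martingales along dyadic blocks. The one nontrivial tool you invoke — Freedman's inequality for a reverse MDS — does hold: over a finite horizon $N$, time reversal $e_i:=d_{N-1-i}$, $\mathcal{H}_i:=\mathcal{G}_{N-i}$ turns $(d_j)$ into a forward MDS with the same bracket and increment bounds, and the maximal inequality transfers via $|M_n|\le|M_N|+|M_N-M_n|$; you should say this explicitly, since the inequality is usually stated for forward filtrations. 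What each route buys: the paper's proof is two lines once the external theorem is granted, and it generalizes immediately to the polynomial $P$ of any binary constant-length substitution (Theorem \ref{deviation}) because $\mathrm{Mon}_2$ membership is easy to check there; your proof is longer but elementary and constructive, gives explicit constants in principle, and isolates exactly why the Thue--Morse case is special (the eigenfunction identity for $\mathcal{P}$) — though for that same reason it does not extend as cleanly to the general $P$, where no exact coboundary decomposition is available and one would fall back on a Gordin-type approximation, at which point the black-box theorem is the more economical choice.
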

It might be worth to compare this result with the growth of a cocycle in the context of discrete Schrödinger operators arising from the Thue-Morse substitution found in \cite{liu2017unbounded}, where the authors show a similar subexponential behavior.\\

Theorem \ref{subexp} will imply, in particular, that the top Lyapunov exponent of the spectral cocycle for the Thue-Morse substitution vanishes, which is a known fact (see \cite{BaakeBinary}). In fact, this will be the case for all topological factors of the Thue-Morse subshift coming from an aperiodic substitution. Moreover, we will prove a more general result for all primitive aperiodic bijective substitutions on a two letter alphabet, which is our main result. Let $\rho(A)$ be the spectral radius of a matrix $A$. 
\begin{theorem}\label{main}
	Let $\zeta$ be an aperiodic, primitive and bijective subtitution on two letters of constant length $q$. If $\sigma$ is an aperiodic substitution such that the associated dynamical system is a topological factor of the one associated to $\zeta$, then the corresponding top Lyapunov exponents of the spectral cocycle satisfy
	\[
	\dfrac{\chi_{\sigma}^+(\omega)}{\log(\rho(M_\sigma))} \leq \dfrac{\chi_{\zeta}^+}{\log(q)},
	\]
	for Lebesgue almost all $\omega$, where $M_\sigma$ is the substitution matrix of $\sigma$.
\end{theorem}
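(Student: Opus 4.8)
The plan is to transport twisted Birkhoff sums from $X_\sigma$ back to $X_\zeta$ along the factor map and then to exploit the Riesz-product structure available for bijective substitutions on two letters (which, having $\abs{\zeta(a)}=\abs{\zeta(b)}$, are automatically of constant length, say $q$). Let $\pi\colon X_\zeta\to X_\sigma$ be the factor map; by the Curtis--Hedlund--Lyndon theorem it is a sliding block code, so there is $r\ge 0$ with $(\pi x)_n$ depending only on $x_{n-r}\cdots x_{n+r}$, and hence every cylinder function $g$ on $X_\sigma$ pulls back to a cylinder function $g\circ\pi$ on $X_\zeta$ of bounded radius. Since $\pi\circ T=T\circ\pi$, one has $\sum_{n<N}g(T^n\pi x)e^{2\pi i n\omega}=\sum_{n<N}(g\circ\pi)(T^nx)e^{2\pi i n\omega}$ for all $x,\omega,N$, and, $X_\sigma$ being minimal, the entries of the spectral-cocycle products $\mathscr{A}^{(n)}_\sigma(\omega)$ are, up to bounded factors, such twisted Birkhoff sums at the scales $N=\abs{\sigma^n(a_i)}$. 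Consequently the theorem follows once we prove: \emph{for a.e.\ $\omega$ and every cylinder function $f$ on $X_\zeta$, uniformly along $X_\zeta$, $\bigl\lvert\sum_{n<N}f(T^nx)e^{2\pi i n\omega}\bigr\rvert\le e^{(\chi_\zeta^+ +o(1))\log N}$ as $N\to\infty$} --- feeding $N=\abs{\sigma^n(a)}$, dividing by $\log\abs{\sigma^n(a)}$ and letting $n\to\infty$ then gives $\chi_\sigma^+\le\chi_\zeta^+$.

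To prove this uniform bound one introduces multiple twisted Birkhoff sums. Writing $\chi$ for the non-trivial character of the two-letter alphabet, every indicator $\mathbf 1_{[w]}$ is a fixed linear combination of the functions $x\mapsto\prod_{i\in S}\chi(T^ix)$ over finite $S\subset\Z$, so a cylinder function is such a combination and its twisted Birkhoff sum is a combination of the multiple twisted Birkhoff sums $\sum_n\bigl(\prod_{i\in S}\chi(T^{n+i}x)\bigr)e^{2\pi i n\omega}$. Here bijectivity is essential: $\chi(\zeta(a)_j)=\epsilon_j\,\chi(a)$ with signs $\epsilon_j\in\{\pm1\}$ not depending on $a$. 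Substituting $x=\zeta(x')$ and cutting the range of $n$ along the $q$-blocks therefore expresses a multiple twisted Birkhoff sum at scale $\abs{\zeta^{m+1}(a)}$ as a bounded-size combination of multiple twisted Birkhoff sums at scale $\abs{\zeta^m(a)}$ for the renormalized frequency $q\omega$, with coefficients that are exactly partial products of the type $p_m$ attached to $\zeta$ (shifted along the renormalization orbit). Iterating, a multiple twisted Birkhoff sum at scale $\abs{\zeta^m(a)}$ is a sum of at most polynomially many (in $m$) terms, each a product of boundedly many such Riesz-type products.

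It remains to insert the sharp estimates. By the sub-exponential deviation bound for bijective substitutions on two letters established below --- of which the first theorem above is the Thue--Morse instance --- for a.e.\ $\omega$ one has $\log\lvert p_m(\omega)\rvert=m\,\chi_\zeta^+ +O\!\bigl(\sqrt{m\log\log m}\bigr)$, where $\chi_\zeta^+$ is the log-Mahler measure of the $\pm1$ trigonometric polynomial coding the sign pattern of $\zeta$ (and equals $0$ for Thue--Morse). Combined with the previous paragraph, for a.e.\ $\omega$ the twisted Birkhoff sum of any cylinder function on $X_\zeta$ at scale $\abs{\zeta^m(a)}$ is at most $e^{m\chi_\zeta^+ +O(\sqrt{m\log\log m})}$ times a polynomial in $m$; since $\abs{\zeta^m(a)}\asymp q^m$ this reads $e^{(\chi_\zeta^+ +o(1))\log N}$ at the renormalization scales, and the usual decomposition of a general $N$ into $\zeta$-adic blocks together with summation by parts propagates it to all $N$. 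Choosing $\omega$ in the intersection of the relevant full-measure sets and using the first paragraph finishes the proof. The crux --- and the step I expect to be the main obstacle --- is the control of the \emph{multiple} twisted Birkhoff sums: a priori several shifted copies of $\chi$ could generate growth strictly exceeding $\chi_\zeta^+$, and excluding this requires tracking the $p_m$-factors precisely and using the \emph{sharpness} of the deviation estimate (merely knowing $\tfrac1m\log\lvert p_m\rvert\to\chi_\zeta^+$ would not do, since the polynomially many terms and the mismatch between the $\sigma$- and $\zeta$-renormalization scales have to be absorbed into a sub-exponential error); it is precisely the bijectivity of $\zeta$ on two letters that makes the renormalization of the multiple sums close up into products of $p_m$'s rather than into a genuinely larger matrix cocycle.
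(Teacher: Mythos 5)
Your overall skeleton matches the paper's: pull the cocycle entries of $\sigma$ back along the sliding-block-code factor map, rewrite each pulled-back cylinder indicator in terms of the $\pm 1$ function $F=\mathds{1}_{[0]}-\mathds{1}_{[1]}$, thereby reducing to what the paper calls twisted correlations $C^\pm_N(a_1,\dots,a_t,\omega,\mathbf x)$ (your ``multiple twisted Birkhoff sums''), then feed in the sub-exponential deviation estimate (Theorem \ref{deviation}) and Lemma \ref{LemmaLowerBound}, and finally transport from the fixed point to arbitrary $\mathbf x$ and arbitrary $N$ via the prefix--suffix decomposition. Where you genuinely diverge is in how the twisted correlations are estimated. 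The paper does \emph{not} renormalize the sum directly; it instead uses the arithmetic description (Lemmas \ref{lemmaWords}, \ref{TreeBoundGeneral}, Corollaries \ref{IntersectionWords}, \ref{IntersectionBounds}, after Emme--Prikhod'ko) of the solution sets $\mathcal S_{a,d}=\{k:h(k+a)-h(k)=d\}$ as finite unions of base-$q$ cylinders of controlled number and length, converts $C^\pm_{q^n}$ into a sum over the bounded family of admissible $(d_1,\dots,d_t)$ of geometric sums (when $t$ is odd) or of products $\prod_j P(q^j\omega)$ (when $t$ is even), and only then invokes the deviation bound and the Borel--Cantelli lemma. Your proposal replaces that combinatorics with a one-step renormalization identity $x\mapsto\zeta(x)$, $k=qk'+j$, exploiting $\chi(\zeta(a)_j)=\epsilon_j\chi(a)$, and iterating; this is a legitimately different and more dynamical route to the same estimate, and a short calculation for $t=1$ shows it does close up. But note two points where your description needs tightening to actually deliver the theorem. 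First, the parity dichotomy between $t$ odd and $t$ even, which the paper treats as the crux (polynomial bound vs.\ $e^{nI}$ times sub-exponential), is not made explicit in your renormalization; under iteration the shift set $\{0,a_1,\dots,a_t\}$ branches, and the $\chi$'s either all cancel (even number of factors per site, yielding a geometric sum controlled by $\norm{q^l\omega}_{\R/\Z}^{-1}$) or leave a single $\chi$ (odd number, yielding a genuine $p$-product), so the two regimes must be tracked separately. Second, your claim that the coefficients emerging are ``exactly partial products of the type $p_m$'' is only literally true when $t$ is even and all shifts have collapsed; in the intermediate regime they are unit-modulus or bounded trigonometric polynomials in the $\epsilon_j$'s, distinct from $P(e(\omega))$, and the polynomially-many-in-$m$ count of terms arises from the geometric-sum contributions at each renormalization depth, not from branching (which is bounded in $a_t$). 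You flag the control of the multiple sums as the main obstacle and are right to do so; your route can be made to work, but as written it stops short of the estimate that the paper establishes in Theorem \ref{TwistCorrGen} and Corollary \ref{TwistedCorrUnifGen}.
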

As we shall see, the nonnegativity of the top Lyapunov exponent of the spectral cocycle is a general fact (Theorem \ref{Positivity}), so for the Thue-Morse substitution we have the next result.
\begin{cor}\label{main2}
	For every topological factor of the Thue-Morse subshift coming from an aperiodic substitution $\sigma$ we have $\chi_{\sigma}^+(\omega) = 0$, for Lebesgue almost all $\omega$.
\end{cor}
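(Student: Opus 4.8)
The plan is to obtain Corollary~\ref{main2} as a short concatenation of three facts: Theorem~\ref{main}, the general positivity statement Theorem~\ref{Positivity} (non-negativity of the top Lyapunov exponent), and the vanishing $\chi_{\zeta_{\mathrm{TM}}}^+=0$ for the Thue--Morse substitution $\zeta_{\mathrm{TM}}$ itself. Note first that $\zeta_{\mathrm{TM}}\colon 0\mapsto 01,\ 1\mapsto 10$ is primitive, aperiodic and bijective on a two-letter alphabet, so it is an admissible choice of $\zeta$ in Theorem~\ref{main}.

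Next I would verify $\chi_{\zeta_{\mathrm{TM}}}^+=0$ (this is the fact recorded, with reference \cite{BaakeBinary}, immediately after the first theorem of the introduction). In the fixed eigenbasis $\{(1,1),(1,-1)\}$ the iterated spectral cocycle $\mathcal{M}_{\zeta_{\mathrm{TM}}}^{(n)}(\omega)$ is diagonal, with diagonal entries of modulus $\prod_{j=0}^{n-1}2\abs{\cos(\pi 2^j\omega)}$ and $\prod_{j=0}^{n-1}2\abs{\sin(\pi 2^j\omega)}=p_n(\omega)$; the first product telescopes to $\abs{\sin(2^n\pi\omega)}/\abs{\sin(\pi\omega)}$, hence is bounded in $n$ for a.e.\ $\omega$, so $\chi_{\zeta_{\mathrm{TM}}}^+=\max(0,\Lambda)$ with $\Lambda$ the a.e.-constant value (by ergodicity of the doubling map on $\T$) of $\limsup_n \tfrac1n\log p_n(\omega)$. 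By the first theorem of the introduction, for a.e.\ $\omega$ one has $p_n(\omega)\le e^{B\sqrt{n\log\log n}}$ for all large $n$, so $\Lambda\le\lim_n B\sqrt{n\log\log n}/n=0$ and $\chi_{\zeta_{\mathrm{TM}}}^+\le 0$; combined with $\chi_{\zeta_{\mathrm{TM}}}^+\ge 0$ from Theorem~\ref{Positivity} this gives $\chi_{\zeta_{\mathrm{TM}}}^+=0$.

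It then remains to transfer this to the factors. Let $\sigma$ be an aperiodic substitution whose associated subshift is a topological factor of the Thue--Morse subshift. Applying Theorem~\ref{main} with $\zeta=\zeta_{\mathrm{TM}}$ yields $\chi_{\sigma}^+\le\chi_{\zeta_{\mathrm{TM}}}^+=0$, and Theorem~\ref{Positivity} applied to $\sigma$ yields $\chi_{\sigma}^+\ge 0$; the two inequalities force $\chi_{\sigma}^+=0$. I do not expect a real obstacle here: the corollary is formal once Theorems~\ref{main} and~\ref{Positivity} are in hand, and the only point deserving care is the computation $\chi_{\zeta_{\mathrm{TM}}}^+=\max(0,\Lambda)$ together with the passage from the a.e.\ pointwise bound of the first theorem to the (a.e.-constant, integrable) exponent --- this rests on the Riesz-product form of the cocycle, the telescoping identity above, ergodicity of $(\T,\,x\mapsto 2x)$, and the elementary estimate $\sqrt{n\log\log n}=o(n)$. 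In particular only the upper bound on $p_n$ is used; the companion bound on $p_n^{-1}$ is not needed, positivity of $\chi_\sigma^+$ being already supplied by Theorem~\ref{Positivity}.
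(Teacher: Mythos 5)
Your proof is correct and follows the same sandwich argument the paper intends: Theorem~\ref{main} applied with $\zeta=\zeta_{\mathrm{TM}}$ gives $\chi_\sigma^+\le\chi_{\zeta_{\mathrm{TM}}}^+$, the vanishing $\chi_{\zeta_{\mathrm{TM}}}^+=0$ closes the upper bound, and Theorem~\ref{Positivity} supplies $\chi_\sigma^+\ge0$. Your verification that $\chi_{\zeta_{\mathrm{TM}}}^+=0$ via diagonalization in the basis $\{(1,1),(1,-1)\}$, the telescoping identity for the cosine product, and the $\sqrt{n\log\log n}$ bound on $p_n$ is a correct (if slightly roundabout) alternative to the paper's direct computation in Section~3.2 using the Birkhoff ergodic theorem with $\int_0^1\log(2\sin\pi x)\,dx=0$, but it does not change the structure of the argument.
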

In fact, there is only one non-trivial subshift factor of the Thue-Morse subshift up to conjugacy (this may be deduced from \cite{coven2017topological} and \cite{coven2016computing}) which is given by the period doubling substitution. But there is no unicity of the substitution used to represent a subshift (in fact, there are infinitely many, see Theorem \ref{Dekk} below), and it is not necessarily true that two substitutions with conjugate systems have the same top Lyapunov exponent of the spectral cocycle.\\

We organize the paper as follows: in Section 2 we recall several notions around substitutions and dynamics, in particular, those related to the Thue-Morse sequence. In addition we recall the definition of the spectral cocycle (and results on its top Lyapunov exponent), the twisted Birkhoff sum and consider one generalization: the twisted correlation. To finish, we state a result about asymptotic laws of expanding maps of the unit interval which will be essential for our estimates.\\

Section 3 begins with three general results about the top Lyapunov exponent of the spectral cocycle. It continues with finer estimates for the twisted Birkhoff sums in the Thue-Morse case (in particular, showing its subexponential behavior) and for constant length substitutions on two letters. We finish this section with upper bounds for the twisted correlations of Thue-Morse. For simplicity and as a motivation, the bounds are proved in the simplest case, but presented in general.\\

In Section 4 we state the analogous results of last section on the Thue-Morse substitution to an arbitrary bijective substitution on two letters. In particular, we state Theorem \ref{TwistedCorrUnifGen} which contains upper bounds for the twisted correlations. In Section 5 we present the proof of Theorem \ref{main} based on the results of Section 4. Finally in the Appendix we present in full generality the proof of the upper bounds for the twisted correlations of bijective substitution on two letters.

\section*{Acknowledgements}
The author would like to thank J\"{o}el Rivat for the discussions around exponential sums of digital functions. The author would also like to thank Beno\^{i}t Saussol to direct us to the article \cite{dedecker2012almost} for the bounded law of iterated logarithm, and S\'ebastien Gou\"{e}zel for explaning to us the deduction of this law from an invariance principle and for reading the first manuscript. I would like to thank the two anonymous referees for their exhaustive lists of corrections, which have significantly improved the presentation of this article. Finally, I would like to thank my supervisors Alexander I. Bufetov and Pascal Hubert, for all the discussions around this problem and constant encouragement.

\section{Background}
\subsection{Substitutions}\label{background} For the basic notions on substitutions see for example \cite{queffelec2010substitution}. Let us start by fixing a positive integer $m\geq 2$ and a finite alphabet $\A = \left\{1,\dots,m\right\}$. A \textit{substitution} on the alphabet $\A$ is a map $\zeta : \A \longrightarrow \A^+$, where we use $\A^+$ to denote the set of finite (non-empty) words on $\A$. By concatenation, it is natural to extend a substitution to $\A^+$ to $\A^\N$ (one-sided sequences) or $\A^\Z$ (two-sided sequences). In particular, the iterates $\zeta^n(a) = \zeta(\zeta^{n-1}(a))$ for $a \in \A$, are well defined.\\

For a word $w\in\A^+$ let $\abs{w}$ be its length and let $\abs{w}_a$ be the number of symbols $a$ found in $w$. The \textit{substitution matrix} associated to a substitution $\zeta$ is the $m\times m$ matrix with integer entries defined by $M_{\zeta}(a,b) = \abs{\zeta(b)}_a$. A substitution is called \textit{primitive} if its substitution matrix is primitive (see \cite{queffelec2010substitution}, Chapter 5). If there exists $q\in\N, q\geq 2$ such that $\abs{\zeta(a)} = q$ for all $a\in\A$, we say that $\zeta$ is a \textit{constant-length} substitution.\\

The \textit{substitution subshift} associated to $\zeta$ is the set $X_\zeta$ of sequences $(x_n)_{n\in\Z} \in \A^\Z$ such that for every $i\in\Z$ and $k\in\N$ exist $a \in \A$ and $n\in\N$ such that $x_i\dots x_{i+k}$ is a subword of some $\zeta^n(a)$. Let $w=w_0\dots w_k$ be a word in $\A^+$ and $l\in\Z$. The \textit{cylinder} at $l$ associated to $w$ is the set
\[
[w]_l = \left\{ (x_n)_{n\in\Z} \in X_{\zeta} \,\middle| \, x_l = w_0,\dots, x_{l+k}=w_k \right\}.
\]
When $l=0$ we will omit the subindex $l$. Abusing notation, let $[w]$ be the elements of $\A^+$ that start with $w$, that is, having $w$ as a prefix. In particular, we will consider in the next sections the indicator function of a cylinder $[w]\subset X_\zeta$, which we will denote by $\mathds{1}_{[w]}$.\\

A classical result is that the $\Z$-action by the \textit{left-shift} $T((x_n)_{n\in\Z}) = (x_{n+1})_{n\in\Z}$ on $X_{\zeta}$ is minimal and uniquely ergodic when $\zeta$ is primitive (see \cite{queffelec2010substitution}, Chapter 5). From now on we only consider primitive and \textit{aperiodic} substitutions, i.e., such that there are no shift-periodic sequences in the subshift.\\

Finally, let us recall the definition of conjugacy and semiconjugacy. We say two subshifts $(X,T),(Y,T)$ are \textit{topologically conjugate} if there is a homeomorphism $\pi: X \longrightarrow Y$ such that $\pi \circ T = T\circ \pi$. If $\pi$ is only onto, then we say $X$ and $Y$ are \textit{topologically semiconjugate}, or that $Y$ is a \textit{factor} of $X$.

\subsection{Thue-Morse sequence and bijective subtitutions} Let 
\[
\zeta_{TM}: 0 \mapsto 01, 1\mapsto 10
\]
be the Thue-Morse substitution. This substitution has two fixed points, namely, $\mathbf{u}=\zeta_{TM}^{\infty}(0)$ and $\mathbf{v}=\zeta_{TM}^{\infty}(1)$. The former one is called the Thue-Morse sequence. Note that $\mathbf{v}$ is equal to $\mathbf{u}$ after changing the symbols $0$'s for $1$'s and $1$'s for $0$'s. Another characterization of this sequence which we will use along the whole text is the next arithmetic property valid for all integers $k\geq 0$:
\[
\mathbf{u}_k = s_2(k) \text{ (mod 2)},
\]
where $s_2(k)$ is equal to the sum of digits in the binary expansion of $k$, i.e., if $k = k_0 + \dots + k_{n-1}2^{n-1}$ with $k_j \in \{0,1\}$, then $s_2(k) = k_0 + \dots + k_{n-1}$.\\

The Thue-Morse substitution is a particular case of \textit{bijective} substitutution.
\begin{definition}
	A substitution $\zeta$ of constant length $q$ is bijective if for all $j \in \{1,\dots, q\}$ there exists a permutation $\pi_j$ of $\A$ such that for all $a\in\A$  we have $\zeta(a)_j = \pi_j(a)$. In particular, for a fixed $j \in \{1,\dots, q\}$ all letters $\zeta(a)_j$, for $a\in\A$, are different.
\end{definition}
As we will see in Section 4, an analogue of the arithmetic property of Thue-Morse is shared by all bijective substitution on two letters (see Corollary \ref{arithmeticBij}).
\begin{example}
	Let us consider the alphabets $\A = \{1,2,3,4,5\}$ and $\mathcal{B}=\{0,1\}$ and the following substitutions:
	\begin{align*}
	\zeta_1: \begin{cases} 
	1\mapsto 123325 \\
	2\mapsto 235414 \\
	3\mapsto 544231 \\
	4\mapsto 311142 \\
	5\mapsto 452553
	\end{cases} \,
	\zeta_2: \begin{cases} 
	1\mapsto 123323 \\
	2\mapsto 235414 \\
	3\mapsto 544231 \\
	4\mapsto 311142 \\
	5\mapsto 452553
	\end{cases} \,
	\zeta_3: \begin{cases} 
	0\mapsto 01101011 \\
	1\mapsto 10010100
	\end{cases} \,
	\zeta_3: \begin{cases} 
	0\mapsto 01010 \\
	1\mapsto 11100
	\end{cases}
	\end{align*}
	
The substitutions $\zeta_1,\zeta_2$ are of constant length ($q=6$) on $\A$. The substitution $\zeta_1$ is bijective and $\zeta_2$ is not bijective, since for $j=6$ we have $\zeta_2(1)_6 = \zeta_2(5)_6 = 3$. The substitutions $\zeta_3,\zeta_4$ are of constant length ($q=8$ and $q=5$ respectively) on $\mathcal{B}$. The substitution $\zeta_3$ is bijective and $\zeta_4$ is not bijective, since for $j=2$ we have $\zeta_4(0)_2 = \zeta_4(1)_2 = 1$.
\end{example}
\subsection{Conjugacy and factor list of a substitution subshift}\label{conjugacy}
Here we recall some results on factors of substitution subshifts. First, we recall the classical result of Curtis-Hedlund-Lyndon on topological factor maps on general subshifts. For a shift space $X$, let $\mathcal{L}_r(X)$ be the set of all possible words appearing in elements of $X$ of length $r\geq 0$.

\begin{definition}
A \textit{sliding block code} $\pi$ is a map between shift spaces $X\subseteq\A^\Z$, $Y\subseteq \mathcal{B}^\Z$ such that there exists an integer $r\geq0$ and a map $\widehat{\pi}:\mathcal{L}_{2r+1}(X)\longrightarrow \mathcal{B}$ such that the image $(y_n)_{n\in\Z}\in Y$ by $\pi$ of $(x_n)_{n\in\Z}\in X$ is given by the rule
\[
y_n = \widehat{\pi}(x_{n-r}\dots x_n \dots x_{n+r}) \quad \text{for all } n\in\Z.
\]
\end{definition}
\begin{theorem}[see \cite{lind_marcus_1995}, Theorem 6.2.9] \label{CHL}
A function $\pi: X \longrightarrow Y$ is a sliding block code if and only if it is continuous and commutes with the shift map $T$.
\end{theorem}
For substitution subshifts we have the next two general results.
\begin{theorem}[\cite{durand2000linearly}]\label{du00}
	Subshift topological factors of substitution systems are topologically conjugate to substitution subshifts.
\end{theorem}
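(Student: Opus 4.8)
The plan is to follow Durand's proof, whose skeleton is: reduce to a letter-to-letter factor map, show that linear recurrence is inherited by the factor, and then deduce from the return-word characterisation of primitive substitutive sequences that the factor is conjugate to a substitution subshift. Throughout we may restrict to primitive aperiodic $\zeta$, since the degenerate cases are trivial; and if the factor $(Y,T)$ is periodic it is itself a substitution subshift, so we assume $Y$ aperiodic.

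First I would reduce to a coordinatewise code. By the Curtis--Hedlund--Lyndon theorem a topological factor map $\pi\colon X_\zeta\to Y$ is a sliding block code of some radius $r$. Replacing $X_\zeta$ by its $(2r+1)$-block presentation --- which is topologically conjugate to $X_\zeta$, minimal and aperiodic, and is again a substitution subshift (the induced substitution being read off from the $(2r+1)$-blocks occurring inside $\zeta^N(a)$ for $N$ with $\abs{\zeta^N(a)}\geq 2r+1$) --- one may assume $\pi$ is induced by a map $\phi$ of alphabets applied letter by letter, so $Y=\{(\phi(x_n))_{n\in\Z}:x\in X_\zeta\}$.

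Next I would record two inputs. First, a primitive aperiodic substitution subshift is linearly recurrent. Second, linear recurrence passes to such factors: if $v$ is a factor of $Y$ of length $n$, then two consecutive occurrences of $v$ in a point $\phi(x)$ of $Y$ correspond to two consecutive occurrences in $x$ of the finite set $\phi^{-1}(v)$ of length-$n$ words, and the gap between them is $\leq Kn$, where $K$ is the linear recurrence constant of $X_\zeta$ (follow the word read by $x$ at the first occurrence until its next occurrence); hence every return word of $Y$ to $v$ has length $\leq Kn$, and $Y$ is linearly recurrent. The substantive step is then to apply Durand's theorem that a minimal subshift is conjugate to a primitive substitution subshift exactly when its points admit only finitely many distinct derived subshifts with respect to return words. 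Since $X_\zeta$ is substitutive it has this finiteness property, and the task is to push it to $Y$: fixing $y=\phi(x)\in Y$, the derived subshift of $y$ with respect to the return words of a factor $v$ is a $\phi$-recoding of the structure of $x$ relative to the occurrences of the finite set $\phi^{-1}(v)$, and the latter is controlled by the finitely many derived subshifts of $x$ together with the fixed finite map $\phi$. One concludes that $Y$ realises only finitely many derived subshifts and is therefore conjugate to a primitive substitution subshift.

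The hard part is exactly this last transfer. A priori $Y$, being merely linearly recurrent, could be a genuinely non-substitutive S-adic system, so the finiteness of its derived subshifts cannot be deduced from linear recurrence alone; it must be extracted from the fact that $Y$ is a factor of the \emph{substitutive}, hence self-similar, system $X_\zeta$, together with the letter-to-letter form of the factor map obtained after the block recoding, so that the hierarchy of return-word structures on $Y$ is subordinate to the stationary Kakutani--Rokhlin hierarchy of $X_\zeta$. Pinning down that ``return words to a finite set of equal-length factors'' of $X_\zeta$ generate only boundedly much new combinatorial data --- equivalently, transporting the Bratteli--Vershik presentation of $X_\zeta$ down to $Y$ and checking it stays within finitely many shapes --- is where the real work lies.
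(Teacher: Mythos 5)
The paper states this result as a citation from \cite{durand2000linearly} and does not prove it, so there is no internal proof to compare your argument against; I can only assess the sketch on its own terms. The opening moves are fine and are indeed the ones Durand uses: the reduction via Curtis--Hedlund--Lyndon and higher-block recoding to a letter-to-letter factor map is standard and correct (the $(2r+1)$-block system of a primitive aperiodic substitution subshift is again one, up to passing to a power of $\zeta$), and the observation that linear recurrence passes to such factors, with constant controlled by $K$, is also correct.

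Where the proposal stops being a proof is exactly where the theorem lives. You reduce to showing that $Y$ admits only finitely many derived subshifts and then write that the return structure of $y=\phi(x)$ relative to $v$ ``is controlled by the finitely many derived subshifts of $x$ together with the fixed finite map $\phi$. One concludes that $Y$ realises only finitely many derived subshifts.'' That is an assertion, not an argument, and your own final paragraph concedes it. The gap is not cosmetic: after the block recoding, $Y$ is by definition the subshift generated by a primitive substitutive sequence $\phi(x)$, and proving that such a sequence has finitely many derived sequences is precisely the nontrivial direction of Durand's return-word characterisation; it cannot follow from linear recurrence of $Y$ alone (you rightly note that linearly recurrent, non-substitutive subshifts exist), and it is also where one must genuinely invoke the self-similarity of $X_\zeta$. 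The difficulty is compounded by the fact that derived sequences of $x$ are taken with respect to return words to single prefixes, whereas what $\phi$ pulls $v$ back to is a \emph{set} of equal-length words, so the recoding does not obviously land you back among the finitely many known derived shapes of $x$. As written, the proposal is a faithful roadmap that names the right tools and correctly isolates the obstruction, but the decisive lemma is left entirely unproved.
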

\begin{theorem}[\cite{durand1999substitutional}]
	There exist finitely many subshift topological factors of a substitution subshift up to topological conjugacy.
\end{theorem}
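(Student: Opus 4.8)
The plan is to reduce the statement --- read, following \cite{durand2000linearly}, for \emph{aperiodic} subshift factors --- to a bounded-size count for substitutions, via Theorem~\ref{du00} together with linear recurrence. First I would recall that a primitive aperiodic substitution subshift $(X_\zeta,T)$ is \emph{linearly recurrent}: there is a constant $L=L(\zeta)$ such that every word $v$ occurring in $X_\zeta$ with $\abs v\ge L\abs w$ contains every word $w$ occurring in $X_\zeta$. By Theorem~\ref{du00}, every aperiodic subshift factor $(Y,S)$ of $(X_\zeta,T)$ is topologically conjugate to a primitive aperiodic substitution subshift, so it is in particular minimal, aperiodic, and --- the point I would establish next --- linearly recurrent with a constant $L'$ \emph{depending only on $L$}, not on $Y$ or on the chosen factor map. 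Once this uniform $L'$ is in hand, finiteness follows cheaply: I would invoke the return-word characterisation of substitutive systems (\cite{durand1999substitutional,durand2000linearly}) to conclude that a linearly recurrent subshift with constant $L'$ is conjugate to a substitution subshift whose combinatorial size --- alphabet cardinality and maximal length of an image word --- is bounded in terms of $L'$, hence of $\zeta$; since there are only finitely many substitutions of bounded size, only finitely many conjugacy classes of factors can occur.

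For that reduction I would use the standard rigidity of linearly recurrent systems. A linearly recurrent subshift with constant $L'$ has, after deleting unused letters, an alphabet of cardinality at most $L'$ and complexity $p(n)\le L'n$; more to the point, for every word $w$ its set of return words has cardinality bounded by a function of $L'$ and consists of words of length between $\abs w$ and $L'\abs w$, so the associated derived subshift is again linearly recurrent with a controlled constant on an alphabet of controlled size. Iterating, $(Y,S)$ admits, up to conjugacy, only boundedly many derived subshifts, and the return-word substitutions relating $(Y,S)$ to its successive derivatives range over a finite set once $L'$ is fixed; a pigeonhole in this finite package of data then exhibits $(Y,S)$ as an eventually substitutive system of bounded size, which is the substitution subshift promised above. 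Alternatively, and perhaps more transparently, one shows directly that every aperiodic factor is realised by a sliding block code $X_\zeta\to Y$ whose radius is bounded in terms of $L$; the alphabet of $Y$ is then bounded as well, and there are only finitely many such codes.

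The hard part will be the uniformity claimed in the first paragraph, namely that a factor of a linearly recurrent system with constant $L$ is linearly recurrent with a constant depending on $L$ alone. The obstruction is that a factor map $\pi:X_\zeta\to Y$ is merely a sliding block code of \emph{some} radius $r$, so a short word of $Y$ lifts only to a word of $X_\zeta$ of length comparable to $r$, and the naive estimate for the recurrence constant of $Y$ degrades with $r$: one cannot just push forward the recurrence property of $X_\zeta$. Overcoming this needs the finer structure of linearly recurrent systems --- that their Kakutani--Rokhlin tower presentations are uniformly proper, equivalently that they have bounded topological rank --- either to bound $L'$ directly or to pass to a conjugate coding of $Y$ in which $\pi$ has radius bounded in terms of $L$ alone. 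This is the step I expect to carry the whole argument, and it is the reason the count is uniform over \emph{all} substitution subshifts rather than merely finite factor by factor.
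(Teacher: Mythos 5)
The paper does not prove this statement; it records it as a cited background result (Durand), so there is nothing internal to compare your argument against, and it can only be judged on its own terms. Your second (``alternatively'') route is the right one and matches Durand's actual strategy: bound, in terms of the linear recurrence constant $L$ of $X_\zeta$ alone, the radius of a sliding block code realising any aperiodic subshift factor after composing with a power of the shift; then the factor alphabet has size at most $(\#\mathcal A)^{2r+1}$, there are finitely many block codes, hence finitely many concrete factor subshifts and in particular finitely many conjugacy classes. But that bounded-radius lemma \emph{is} the theorem in disguise, and you only name the mechanism (uniform Kakutani--Rokhlin tower rigidity) without carrying it out; as written, your proposal has a genuine gap exactly where you say the work should be expected.

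The first route, via derived systems and return words, does not close on its own terms. From a uniform $L'$ you can bound the alphabet, the complexity, and the finite set of return-word substitutions appearing in the directive sequence, but ``admits, up to conjugacy, only boundedly many derived subshifts'' does not follow from bounded $L'$: already among Sturmian subshifts of bounded partial-quotient type (including quadratic, hence substitutive, ones) there are infinitely many pairwise non-conjugate examples sharing a common recurrence constant. The pigeonhole you invoke would need two derived subshifts at different levels to \emph{coincide}, not merely to land in a bounded list of conjugacy classes, and bounded $L'$ alone supplies neither such a coincidence nor a bound on the preperiod or period. That $(Y,S)$ is substitutive you already have from Theorem~\ref{du00}; what is missing is quantitative control on where its directive sequence becomes periodic, which this route leaves unaddressed. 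Concentrate on the sliding-block-code route and prove the radius bound.
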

Deciding when two substitution systems are topologically conjugate or semiconjugate (one is a factor of the other) is a problem recently solved (the conjugacy problem follows from the second one by coalesence of substitution subshifts).
\begin{theorem}[\cite{durand2022decidability}]\label{DuLe} 
	For two uniformly recurrent substitution subshifts $(X,T)$ and $(Y,T)$, it is decidable wheter they are semiconjugate. Moreover, if $(Y,T)$ is aperiodic, then there exists a computable constant $r$ such that for any factor $\pi: X \longrightarrow Y$ there exist $k\in \Z$ and a factor $\pi':X \longrightarrow Y$ of radius less than $r$, such that $\pi = T^k\circ \pi'$.
\end{theorem}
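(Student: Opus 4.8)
This is the decidability theorem of Durand and Leroy \cite{durand2018decidability}, and the plan is to follow their two-step strategy: (a) an a priori, computable bound on the radius of factor maps $X\longrightarrow Y$ when $(Y,T)$ is aperiodic, and (b), using (a), a reduction of ``$(Y,T)$ is a factor of $(X,T)$'' to a finite search. Recall that, by the Curtis--Hedlund--Lyndon theorem, any factor map $\pi\colon X\longrightarrow Y$ between subshifts is a sliding block code, with a well-defined least \emph{radius} $r\geq 0$ for which $\pi(x)_0$ depends only on $x_{-r}\cdots x_{r}$; step (a) is then precisely the ``moreover'' assertion (the power $T^{k}$ absorbing a synchronization ambiguity), and step (b) delivers the first assertion. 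Write $X=X_\zeta$ and $Y=X_\sigma$ with $\zeta,\sigma$ primitive --- a minimal aperiodic substitution subshift can always be so presented --- and set aside the case of periodic $Y$, which is handled separately and easily (factoring onto a finite orbit).

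For step (a) one performs a descent through \emph{derived subshifts}. Since $X$ and $Y$ are primitive aperiodic substitution subshifts they are linearly recurrent, so by Durand's theory \cite{durand2000linearly} each has, up to topological conjugacy, only finitely many derived subshifts, and this finite family is computable; moreover $\zeta$ and $\sigma$ are recognizable (Moss\'{e}'s theorem), with computable recognizability constants. Given a factor $\pi\colon X\longrightarrow Y$ of radius $r$, choose a long word $v\in\mathcal L(Y)$, code the points of $Y$ by their return words to $v$, pull this coding back through $\pi$, and realign it with the recognizability structure of $\zeta$; this produces a word $u\in\mathcal L(X)$ whose occurrences in a point $x\in X$ track --- with an error controlled only by $r$ and the recognizability constants --- the occurrences of $v$ in $\pi(x)$, hence an induced factor map $\mathcal D_u(X)\longrightarrow\mathcal D_v(Y)$ between the corresponding derived subshifts, of radius at most roughly $(r+\abs{u})/L_v$, where $L_v$ is the length of the shortest return word of $Y$ to $v$. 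Because $Y$ is aperiodic and linearly recurrent, $L_v$ grows without bound as $v$ is lengthened, so for a suitable $v$ the induced map has radius $0$; and since only finitely many pairs of derived subshifts can occur, this procedure takes a bounded, computable number of steps, so unwinding it bounds the original $r$ by a constant $r=r(\zeta,\sigma)$. The synchronization offset introduced at each desubstitution accumulates into the integer $k\in\Z$ of the statement, after which the adjusted factor $\pi'=T^{-k}\circ\pi$ has radius below $r$.

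For step (b), with the bound $r$ in hand one enumerates the finitely many sliding block codes $X\longrightarrow Y$ of radius at most $r$ --- each is a function from the finite, computable set of length-$(2r+1)$ words of $\mathcal L(X)$ to the alphabet of $Y$ --- and for each such $\pi_\phi$ decides whether it is a factor map onto $Y$. By minimality of $(Y,T)$, $\pi_\phi$ is a factor map onto $Y$ if and only if $\pi_\phi(X)\subseteq Y$ (equivariance of a block code being automatic, and surjectivity then following because $\pi_\phi(X)$ is a non-empty closed $T$-invariant subset of the minimal $Y$); since $\pi_\phi(X)$ is again, by Theorem \ref{du00}, a substitution subshift with effectively computable presentation, this inclusion --- equivalently, equality with $Y$ --- is effectively decidable via the recognizability structure of $\zeta$ and $\sigma$. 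By step (a), every factor $X\longrightarrow Y$ is $T^{k}\circ\pi_\phi$ for some $k\in\Z$ and some $\phi$ on the list, so one can decide whether $(Y,T)$ is a factor of $(X,T)$. Finally, topological conjugacy is decidable too: substitution subshifts are coalescent --- every factor of such a system onto itself is a conjugacy --- so $(X,T)$ and $(Y,T)$ are conjugate exactly when each is a topological factor of the other.

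The delicate point, and the main obstacle, is the descent in step (a): making precise how the radius of a factor map transforms under the simultaneous return-word coding of source and target, and proving the iteration terminates with an effective bound. This is exactly where aperiodicity of $(Y,T)$ is needed --- over a periodic target the return words do not grow and the descent never starts --- and where the finiteness of the family of derived subshifts, a manifestation of linear recurrence of substitution subshifts, is what forces termination and makes the final constant computable.
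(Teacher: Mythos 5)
The paper does not prove Theorem \ref{DuLe}: it is stated as a citation from \cite{durand2018decidability}, so there is no in-paper argument to compare yours against. As a reconstruction of the Durand--Leroy proof, your outline hits the right landmarks: Curtis--Hedlund--Lyndon to reduce to sliding block codes, descent through return-word (derived) subshifts using linear recurrence and Moss\'e recognizability, termination via the finiteness of derived subshifts up to conjugacy, and a finite search once the radius is bounded, with coalescence upgrading semi-conjugacy decidability to conjugacy decidability.

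Where it is soft is precisely where you flag it yourself --- the quantitative descent --- but also in two places you treat as routine. First, deciding whether $\pi_\phi(X)=Y$: you appeal to Theorem \ref{du00} for an ``effectively computable presentation'' of $\pi_\phi(X)$, but \cite{durand2000linearly} gives only an abstract conjugacy to some substitution subshift; making that presentation effective is itself one of the substantive contributions of \cite{durand2018decidability} and cannot be invoked for free. Second, you need the finite family of derived subshifts, together with their substitutive presentations and the conjugacies between them, to be not merely finite but computably listable; again this is asserted rather than argued. These effectivity points are exactly what separates a decidability theorem from a finiteness theorem, and any proof would have to supply them explicitly rather than by citation. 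For a statement the paper itself leaves unproved, the expected response is to defer to the source; if you do want to reconstruct the argument, these are the gaps to close.
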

It is tempting to think we may list all factors from a given substitution system, but the radius $r$ in the last result depends strongly on the factor system. On the other hand, in the constant length case, this is partially solved: it is possible to give the list of all factors of constant length (see \cite{coven2017topological,durand2022decidability}). Examples of conjugate substitution systems to the Thue-Morse subshift which are not constant length are given in \cite{dekking2014structure}.
\begin{theorem}[\cite{dekking2014structure}]\label{Dekk}
	There exist infinitely many non-constant length, primitive, injective substitutions with Perron-Frobenius eigenvalue equal to 2, conjugate to the Thue-Morse substitution.
\end{theorem}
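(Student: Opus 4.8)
The plan is to first exhibit one such substitution explicitly, and then to manufacture infinitely many by recodings of $X_{TM}$ of growing complexity. For the first step, consider the substitution $\theta$ on $\{0,1,2\}$ given by $0\mapsto 012$, $1\mapsto 02$, $2\mapsto 1$. Its substitution matrix
\[
M_\theta=\begin{pmatrix}1&1&0\\1&0&1\\1&1&0\end{pmatrix}
\]
has characteristic polynomial $\lambda(\lambda-2)(\lambda+1)$, so its Perron--Frobenius eigenvalue is exactly $2$; moreover $M_\theta^{3}$ is strictly positive, so $\theta$ is primitive; the three images are pairwise distinct, so $\theta$ is injective; and they have lengths $3,2,1$, so $\theta$ is not of constant length. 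To see that $X_\theta$ is topologically conjugate to $X_{TM}$ I would introduce the sliding block code $g\colon X_{TM}\to\{0,1,2\}^{\Z}$, $g(x)_i=G(x_ix_{i+1})$, with $G(01)=0$, $G(10)=2$, $G(00)=G(11)=1$. One checks that $g(\mathbf u)=\theta^{\infty}(0)$ — for instance from the arithmetic description $\mathbf u_k=s_2(k)\bmod 2$, which gives $g(\mathbf u)_i=1$ exactly when the binary expansion of $i$ ends in an odd number of $1$'s and $g(\mathbf u)_i=2\mathbf u_i$ otherwise — so by minimality $g(X_{TM})=X_\theta$. Finally $g$ is injective: since $\mathbf u$ has no factor $000$ or $111$, the symbol $1$ is isolated in $g(x)$ (no factor $11$), and an adjacent $0$ or $2$ recovers the underlying coordinate, so $g^{-1}$ is also a sliding block code of bounded range. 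Hence $g$ is a topological conjugacy; in particular $\theta$ is aperiodic.

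For the second step the idea is to iterate this type of construction. A ``synchronous'' recoding of $X_{TM}$, one that reads windows aligned with the $\zeta_{TM}$-decomposition (for example a higher block presentation), always produces again a constant length substitution; so one must use ``non-aligned'' collapsing codes. Concretely, for each $n$ one looks for a map $G_n$ from the length-$n$ factors of $\mathbf u$ to a finite alphabet that identifies precisely those factors which can be distinguished from a bounded amount of context, so that the induced sliding block code $g_n\colon X_{TM}\to\Sigma_n^{\Z}$ is injective — hence a topological conjugacy onto its image — while the image subshift carries (by Theorem~\ref{du00}) a canonical substitution $\sigma_n$, which one checks directly to be primitive, injective, of non-constant length, and with substitution matrix of spectral radius exactly $2$. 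Letting the window length $n$ grow, the alphabets $\Sigma_n$ can be made arbitrarily large, so the $\sigma_n$ are pairwise distinct and infinitely many are obtained. Alternatively one can iterate: apply a further collapsing conjugacy to $X_\theta$ itself, landing on a new such substitution on a strictly larger alphabet, and repeat.

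The main obstacle is precisely this second step, because the required conditions pull against one another. Representations of $X_{TM}$ by constant length substitutions (the powers $\zeta_{TM}^{k}$, the higher block presentations) are abundant but violate non-constant length; while the naive ways of making a representation non-constant length — inserting a ``pass-through'' letter, or delaying some productions by one step — replace the Perron--Frobenius eigenvalue $2$ by the golden mean, the Tribonacci constant, and so on, and can moreover destroy injectivity (the higher block substitutions of $\theta$ already fail to be injective, because $\theta(2)$ is a single letter, so two distinct $2$-blocks $20$ and $21$ have the same image). So the real content is to show that there is a genuinely infinite supply of ``balanced'' recodings, each keeping the Perron--Frobenius eigenvalue equal to $2$ and the substitution injective while the lengths stay non-constant, together with the verification, for every member of the family, that the recoding is an invertible sliding block code — and this last point rests on the recognizability of $\zeta_{TM}$ and the bounded-run combinatorics of the Thue--Morse sequence.
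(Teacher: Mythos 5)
Your first step is correct and complete. The substitution $\theta\colon 0\mapsto 012$, $1\mapsto 02$, $2\mapsto 1$ is exactly the $\zeta_2$ that the paper itself records in its Example environment; the substitution matrix and the factorization $\lambda(\lambda-2)(\lambda+1)$ of its characteristic polynomial are right; primitivity ($M_\theta^3>0$), injectivity, and non-constant length are immediate; and the two-block recoding $G(01)=0$, $G(10)=2$, $G(00)=G(11)=1$ together with the overlap-freeness of Thue--Morse (no factor $000$ or $111$, so $11$ never occurs in the image) gives an invertible sliding block code and hence a topological conjugacy. Your remark that the $2$-block presentation of $\theta$ fails to be injective because $\theta(2)$ is a single letter is also accurate.

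The second step, however, is a genuine gap, and you in effect concede it yourself. You outline a program --- grow the window length $n$, find collapsing maps $G_n$, verify the four required properties for the induced substitution $\sigma_n$ --- and then, quite correctly, enumerate the reasons it is delicate: higher block presentations of $\zeta_{TM}$ stay constant length, the naive pass-through or delay tricks change the Perron--Frobenius eigenvalue, and even the higher block recoding of your own $\theta$ destroys injectivity. But no second example meeting all four conditions is ever produced, let alone an infinite family, and the assertion that ``there is a genuinely infinite supply of balanced recodings'' is precisely the content of the theorem rather than something implied by the surrounding remarks. So the proposal establishes existence of one such substitution, not the claimed infinitude. Note also that the paper does not prove this statement: it is cited to \cite{dekking2014structure}, where the actual construction of the infinite family lives, so there is no internal proof in the paper against which to compare the second half of your argument.
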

\begin{example}
	Consider the substitutions $\zeta_1$ and $\zeta_2$ defined below. Both are conjugate to the Thue-Morse substitution. The substitution $\zeta_3$ (period doubling) defines a non-trivial topological factor.
	\begin{align*}
	\zeta_1:& \: 0 \mapsto 01, 1\mapsto 20, 2\mapsto 10,\\
	\zeta_2:& \: 0 \mapsto 012, 1\mapsto 02, 2\mapsto 1,\\
	\zeta_3:& \: 0 \mapsto 01,\: 1\mapsto 00.
	\end{align*}
\end{example}

\subsection{Solutions of $s_2(k+a)-s_2(k)=d$}\label{Secsolutions} Here we briefly recall some results from \cite{emme2017asymptotic}. Consider the equation $s_2(k+a)-s_2(k)=d$, for $a \in \N$ and $d\in\Z$. Let $\mathcal{S}_{a,d}$ be its solution set. Note that if $d>\ceil{\log_2(a)}$, then $\mathcal{S}_{a,d}=\emptyset$. Also, $d<-n$ implies $\mathcal{S}_{a,d}\cap\{0,\dots,2^n-1\}=\emptyset$. These will be trivial cases of the next lemma.\\

To change between words and numbers represented by digits consider the next notation: $\underline{k}_2$ denotes the word associated to the digits of $k$ in base 2, i.e., if $k = k_0 + \dots + k_{n-1}2^{n-1}$ then, $\underline{k}_2 = k_0\dots k_{n-1}\in\{0,1\}^*$. Similarly, for a word $w=w_0\dots w_{n-1} \in  \{0,1\}^*$, let $\overline{w}^2$ be the number $w_0 + \dots + w_{n-1}2^{n-1}$.
\begin{lemma}[see \cite{emme2017asymptotic}, Lemma 2.1.1]{\label{EPwords}}
	There exists a finite set of words $\mathcal{P}_{a,d} = \{\mathfrak{p}^d_a(1),\dots,\mathfrak{p}^d_a(s)\} \subset \{0,1\}^*$, such that
	\[
	k\in \mathcal{S}_{a,d} \iff \underline{k}_2 \in \bigcup_{i=1}^s \: [\mathfrak{p}^d_a(i)],
	\]
\end{lemma}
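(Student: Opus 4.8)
The statement is classical (and here quoted from \cite{emme2017asymptotic}), so I only sketch how I would prove it. The plan is to follow schoolbook binary addition with carries and observe that $s_2(k+a)-s_2(k)$ is determined by a \emph{bounded} prefix of the digit string of $k$, the bound depending only on $a$ and $d$.

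First I would fix notation for the addition $k+a$. Write the base-$2$ digits of $k$ as $k_0,k_1,k_2,\dots$ (all but finitely many equal to $0$) and those of $a$ as $a_0,\dots,a_{L-1}$, with $a_j=0$ for $j\geq L:=\floor{\log_2 a}+1$. Define the carries $c_0=0$ and $c_{j+1}=\floor{(k_j+a_j+c_j)/2}\in\{0,1\}$; then the $j$-th digit of $k+a$ is $k_j+a_j+c_j-2c_{j+1}$. Summing over $j$ and telescoping (all sums are finite) yields the Kummer-type identity
\[
s_2(k+a)-s_2(k)=\sum_{j\geq0}\bigl(a_j+c_j-2c_{j+1}\bigr)=s_2(a)-\sum_{j\geq1}c_j ,
\]
so, putting $N_0:=s_2(a)-d$, we have $k\in\mathcal{S}_{a,d}$ if and only if the number of carries $\sum_{j\geq1}c_j$ equals $N_0$. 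If $N_0<0$ this never happens and we may take $\mathcal{P}_{a,d}=\emptyset$; assume henceforth $N_0\geq0$.

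The key step is that the number of carries depends only on a prefix of $(k_j)_{j\geq0}$ of length at most $L+N_0+1$. Reading $k_0,k_1,\dots$ and keeping the running state $(j,c_j,\#\text{carries so far})$: as soon as $j\geq L$ and $c_j=0$ one has $c_{j+1}=0$ and no further carry can ever occur, so the count is frozen; as soon as $j\geq L$ and $c_j=1$ one has $c_{j+1}=k_j$, so every next digit equal to $1$ produces a carry while the first digit equal to $0$ kills the carry and then freezes the count as above. Hence after at most $L$ digits the state is either already frozen or is forced into a run of $1$'s, and since a run contributing more than $N_0$ carries already overshoots $N_0$, within $L+N_0+1$ digits the truth value of ``number of carries $=N_0$'' is decided and cannot change afterwards.

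Finally I would let $\mathcal{P}_{a,d}$ be the (finite) set of words $w\in\{0,1\}^{\leq L+N_0+1}$ for which the computation above, run on $w$, reaches a state with $c=0$, $j\geq L$ and exactly $N_0$ carries (keeping only the $\subseteq$-minimal such $w$ to avoid redundancy). Padding $\underline{k}_2$ with trailing zeros so that it defines a point of $\{0,1\}^{\N}$, the discussion shows that $k\in\mathcal{S}_{a,d}$ iff the digit string of $k$ passes through such an accepting state iff $\underline{k}_2$ extends some $w\in\mathcal{P}_{a,d}$, i.e. $\underline{k}_2\in\bigcup_i[\mathfrak{p}^d_a(i)]$. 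The only point requiring a little care — and the only thing I would call an obstacle, the rest being routine bookkeeping of addition — is the case where the carry is still alive at the top genuine digit of $k$: the ``killing'' $0$ then lies among the padding digits, which is precisely why one argues in $\{0,1\}^{\N}$ rather than with the minimal-length binary word.
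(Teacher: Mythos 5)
Your proof is correct but follows a genuinely different route from the one the paper takes. You use the Kummer/Legendre carry identity $s_2(k+a)-s_2(k)=s_2(a)-\sum_{j\geq 1}c_j$ and then argue that the carry count is decided by a prefix of $\underline{k}_2$ of length $\lesssim L+N_0$, because once the index passes the top digit of $a$ a surviving carry can only be fed by consecutive $1$'s of $k$, so the count either freezes or overshoots $N_0$ within a bounded window. The paper instead proves the statement (in the generalized form for strongly $q$-additive $h$, Lemma~\ref{lemmaWords}) by induction on $a$: the base case $a=1$ is handled by a case analysis on whether $k$ ends in a maximal run of $(q-1)$'s, and the inductive step divides $k$ by $q$ to reduce $\mathcal{S}_{m,d}$ (for $a+1\leq m\leq qa$) to $\mathcal{S}_{l,d'}$ with $l\leq a$. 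Your carry-counting argument is more direct and produces a transparent explicit bound on the prefix length, but it rests on the Kummer identity, which is special to the digit-sum function: the difference $s_2(k+a)-s_2(k)$ depends on $k$ only through the carry pattern. For a general strongly $q$-additive $h$ the digit changes induced by carrying are weighted by the values $h(0),\dots,h(q-1)$, so $h(k+a)-h(k)$ is not a function of the number of carries alone; the paper's inductive reduction avoids this dependence and is what makes the generalization (needed later for arbitrary bijective two-letter substitutions) go through. In short, your argument is a clean alternative for the $s_2$ case actually stated, while the paper's choice of proof is driven by its later need for Lemma~\ref{lemmaWords}. One small remark: the paper ultimately pads all prefixes to a common length (Lemma~\ref{TreeBound}) rather than keeping the $\subseteq$-minimal ones as you suggest, but this is only a normalization and either convention yields the lemma.
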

Let $\norm{\mathcal{P}_{a,d}}$ be the length of the longest word of $\mathcal{P}_{a,d}$. We will keep using $\mathcal{P}_{a,d}$ to denote the set made from elements from the old $\mathcal{P}_{a,d}$ after adding all possible suffixes in $\{0,1\}$ to all words which are not of maximal length $\norm{\mathcal{P}_{a,d}}$, making all the words of the new set of length $\norm{\mathcal{P}_{a,d}}$.
\begin{lemma}\label{TreeBound}
	For all $a \in \N$ and $d\in\Z$, the set $\mathcal{P}_{a,d}$ (if it is non-empty) satisfies:
	\begin{itemize}
		\item $\#\mathcal{P}_{a,d} \leq 8a^3$.
		\item $\norm{\mathcal{P}_{a,d}} = 2\ceil{\log_2(a)} + \abs{d}$.
	\end{itemize}
\end{lemma}
\begin{proof}
	Both claims come direct from the proof of Lemma \ref{EPwords}: we generalize the proof to strongly $q$-additive functions in Lemma \ref{lemmaWords} (see also the proof of Corollary \ref{TreeBoundGeneral}). Here we sketch the proof to use the explicit constants later in the particular case of the Thue-Morse substitution. For the second point, since $a$ has at most $\ceil{\log_2(a)}$ digits in base 2, the longest word of $\mathcal{P}_{a,d}$ has a prefix of length at most $\ceil{\log_2(a)}$ followed by a suffix from some $\mathcal{P}_{1,d'}$, and $\abs{d'}\leq \ceil{\log_2(a)} + \abs{d}$. In summary, $\norm{\mathcal{P}_{a,d}} \leq 2\ceil{\log_2(a)} + \abs{d}$.\\
	
	By our assumption, all words from $\mathcal{P}_{a,d}$ have the same length, i.e., we can take $\norm{\mathcal{P}_{a,d}} = 2\ceil{\log_2(a)} + \abs{d}$. Finally, each suffix added is of length at most $2\ceil{\log_2(a)}$ and $a$ has at most $\ceil{\log_2(a)}$ digits in base 2, which defines $2^{\ceil{\log_2(a)}}$ possible prefixes for the words in $\mathcal{P}_{a,d}$. In summary, we have $\#\mathcal{P}_{a,d} \leq 2^{\ceil{\log_2(a)}}2^{2\ceil{\log_2(a)}} \leq 8a^3$.
\end{proof}

\subsection{Mixing coefficients and bounded law of iterated logarithm}A major technical tool needed to obtain the bounds of subsections 3.2 and 3.3 is a bounded iterated logarithm law for uniformly expanding systems, in the case of unbounded observables. Although the proof of this result does not play a role for our work, we comment the context for completeness. \\

Following \cite{dedecker2012almost}, the result comes from the study of some mixing coefficients, definition we recall next. For an integrable random variable $X$ of a probability space $(\Omega,\mathcal{C},\mathbb{P})$, set $X^{(0)} = X - \E(X)$. For a real random vector $(X_1,\dots,X_k)$ and a $\sigma$-algebra $\mathcal{F}\subset\mathcal{C}$, define
\[
\phi(\mathcal{F},X_1,\dots,X_k) = \sup_{(x_1,\dots,x_k)\in\R^k} \left|\left| \E\left( \left. \prod_{j=1}^k \left( \mathds{1}_{\{X_j\leq x_j\}} \right)^{(0)} \, \right| \mathcal{F} \right)^{(0)} \right|\right|_{\infty}.
\]
Let $R$ be a bijective bimeasurable map of $\Omega$. For a process $\mathbf{X} = (X_j)_{j\geq0}$ such that $X_i = R^i \circ X_0$ and $X_0$ is measurable with respect to a $\sigma$-algebra $\mathcal{F}$ that satisfies $\mathcal{F}\subset R^{-1}\mathcal{F}$, define
\[
\phi_{k,\mathbf{X}}(n) = \max_{1\leq l\leq k} \: \sup_{n\leq i_1<\dots<i_l} \phi( \mathcal{F},X_{i_1},\dots,X_{i_l} ).
\]

The main importance of these coefficients is that a suitable control on them implies a bounded law of iterated logarithm for the Birkhoff sums of uniformly expanding maps of the interval (being one of its inverse branches the transformation $R$) and observables $f$ in the closure of the family of functions $\mathrm{Mon}_2(M,\nu)$ (below we denote this closure by $\mathrm{Mon}^c_2(M,\nu)$), where $\text{Mon}_2(M,\nu)$ ($M>0$, $\nu$ a measure on the unit interval) is the set of functions $g:[0,1]\longrightarrow\R$ such that $\nu(\abs{g}^2)\leq M^2$ (square-integrable with respect to the measure $\nu$) and $g$ is a finite sum of functions which are monotonic in some interval and zero elsewhere.
\begin{theorem}[\cite{dedecker2012almost}]\label{thmGouezel}
	Let $S$ be a uniformly expanding map of the unit interval with an absolutely continuous invariant measure $\nu$. Then, for any $M>0$ and $f \in \mathrm{Mon}^c_2(M,\nu)$, there exists a nonnegative constant $A$ such that 
	\[
	\sum_{n=1} \dfrac{1}{n} \nu\left(\max_{1\leq k\leq n} \left|\sum_{i=0}^{k-1} (f\circ S^i - \nu(f)) \right|\geq A\sqrt{n\log\log(n)}  \right) < \infty.
	\]
	This convergence implies a bounded law of iterated logarithm: for $\nu$-almost every $\omega$,
	\[
	\limsup_{n\to\infty} \dfrac{1}{\sqrt{n\log\log(n)}}\left| \sum_{i=0}^{n-1} (f\circ S^i(\omega) - \nu(f)) \right| \leq A.
	\]
\end{theorem}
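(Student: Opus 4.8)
The plan is to split the statement into three steps: (i) a dyadic-block reduction of the first display to a sub-Gaussian deviation estimate for the Birkhoff sums, (ii) the proof of that estimate from a quantitative control of the coefficients $\phi_{k,\mathbf{X}}$, and (iii) the routine passage from the series bound to the almost sure statement. Write $S_n f:=\sum_{i=0}^{n-1}(f\circ S^i-\nu(f))$ and $a_m:=\sqrt{2^m\log\log 2^m}$. Since $k\mapsto\max_{1\le j\le k}\abs{S_j f}$ is nondecreasing and $n\mapsto\sqrt{n\log\log n}$ is eventually nondecreasing, for $n\in[2^m,2^{m+1})$ the event in the first display is contained in $\{\max_{1\le k\le 2^{m+1}}\abs{S_k f}\ge A\,a_m\}$, while $\sum_{2^m\le n<2^{m+1}}1/n\le 1$; hence
\[
\sum_{n}\frac1n\,\nu\!\left(\max_{1\le k\le n}\abs{S_k f}\ge A\sqrt{n\log\log n}\right)\ \le\ C\sum_{m}\nu\!\left(\max_{1\le k\le 2^{m+1}}\abs{S_k f}\ge A\,a_m\right),
\]
so it suffices to make the right-hand series finite for some $A>0$. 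For step (iii), Borel--Cantelli then gives that almost surely $\max_{1\le k\le 2^{m+1}}\abs{S_k f}<A\,a_m$ for all large $m$; since $a_m\le\sqrt{n\log\log n}$ whenever $n\ge 2^m$, this forces $\abs{S_n f}<A\sqrt{n\log\log n}$ eventually, i.e. $\limsup_n\abs{S_n f}/\sqrt{n\log\log n}\le A$.

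The heart is step (ii): one wants, for $N$ along the dyadic scale and $t\asymp\sqrt{N\log\log N}$, a bound $\nu(\max_{1\le k\le N}\abs{S_k f}\ge t)\le C\exp(-c\,t^2/N)$. Indeed, taking $N=2^{m+1}$, $t=A\,a_m$ and using $a_m^2\asymp 2^m\log m$ gives the $m$-th term $\lesssim\exp(-c'A^2\log m)=m^{-c'A^2}$, which is summable once $A$ is large. To obtain such a sub-Gaussian estimate for the \emph{unbounded} observable $f$ one first controls the weak-dependence coefficients of the process $\mathbf{X}=(f\circ S^i)_{i\ge0}$ relative to the natural past $\sigma$-algebra $\mathcal{F}$. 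The key structural point: because $f$ is a finite sum of functions monotone on intervals, each level set $\{f\circ S^i\le x\}$ is the $S^{-i}$-image of a finite union of intervals whose number is bounded independently of $x$, so it has uniformly bounded variation; for a uniformly expanding map the transfer operator has a spectral gap on functions of bounded variation, whence correlations of bounded-variation observables against $\mathcal{F}$ decay exponentially, and iterating this through the $k$ successive spacings $i_{j+1}-i_j$ gives $\phi_{k,\mathbf{X}}(n)\le C\theta^{n}$ with $\theta<1$, uniformly over $k$ up to polynomial growth. \emph{This uniform-in-$k$, multilinear control of $\phi_{k,\mathbf{X}}$ for an unbounded monotone observable is the main obstacle}; the rest is comparatively standard.

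Given exponential decay of $\phi_{k,\mathbf{X}}$, the deviation bound follows by truncation. Split $f=f_\lambda+(f-f_\lambda)$, where $f_\lambda=f\,\mathds{1}_{\{\abs{f}\le\lambda\}}-\nu(f\,\mathds{1}_{\{\abs{f}\le\lambda\}})$ and $\lambda=N^{1/4}$. For the bounded part, the exponential decay of $\phi_{k,\mathbf{X}}$ feeds into a Bernstein-type inequality for weakly dependent sequences, yielding $\nu(\max_{1\le k\le N}\abs{S_k f_\lambda}\ge t/2)\le 2\exp\!\big(-c\,t^2/(Nv+\lambda t\,(\log N)^{b})\big)$ for some $b\ge0$ and $v$ the asymptotic variance; with $\lambda=N^{1/4}$ and $t\asymp\sqrt{N\log\log N}$ the denominator is $\asymp Nv$, which is the sub-Gaussian regime needed above. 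For the remainder, functions in $\mathrm{Mon}^c_2(M,\nu)$ of the type occurring here lie in $L^q(\nu)$ for every finite $q$ --- this is the case of the twisted Birkhoff sums of Thue-Morse, where $S$ is the doubling map and $f(\omega)=\log\!\big(2\sin(\pi\omega)\big)$ has only a logarithmic singularity --- so a union bound and Markov's inequality at a high moment give $\nu\big(\max_{1\le k\le N}\abs{S_k(f-f_\lambda)}\ge t/2\big)\le N\,\nu(\abs{f}>\lambda)\le N\lambda^{-q}\norm{f}_q^q$ (the small centering constant contributing only a deterministic $o(\sqrt{N\log\log N})$), which is summable in $m$ for $q>4$. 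Summing the two contributions bounds each $\nu(\max_{1\le k\le 2^{m+1}}\abs{S_k f}\ge A\,a_m)$ by a summable sequence, finishing the proof. Alternatively, and closer to the method of \cite{dedecker2012almost}, one first establishes by martingale approximation for $\mathrm{Mon}_2$ observables an almost sure invariance principle $S_n f=W(nv)+o(\sqrt{n\log\log n})$ with $W$ a Brownian motion, after which the bounded law of iterated logarithm is immediate from Strassen's law for $W$.
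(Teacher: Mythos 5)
This theorem is quoted from \cite{dedecker2012almost} without proof, so there is no internal argument in the paper to compare against; I will assess whether your sketch would actually prove the cited statement. Steps (i) and (iii) — the dyadic reduction and the Borel--Cantelli passage from the series bound to the almost-sure LIL — are correct and standard. The genuine gap is in step (ii), in the truncation. You set $\lambda=N^{1/4}$ and control the unbounded remainder by a union bound plus Markov at a high moment, $N\,\nu(\abs{f}>\lambda)\le N\lambda^{-q}\norm{f}_q^q$, which is summable along dyadic scales only when $q>4$. But the hypothesis $f\in\mathrm{Mon}^c_2(M,\nu)$ supplies $\nu(\abs{f}^2)\le M^2$ and nothing beyond $L^2$; your parenthetical remark that the \emph{specific} observable $\log(2\sin\pi\omega)$ has all moments rescues the application in this paper but does not prove the theorem at the generality in which it is stated and used. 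The threshold cannot be retuned to repair this: to keep the Bernstein bound sub-Gaussian at $t\asymp\sqrt{N\log\log N}$ you need $\lambda t\lesssim N$, i.e.\ $\lambda\lesssim\sqrt{N/\log\log N}$, and then $N\lambda^{-2}\gtrsim\log\log N\to\infty$, so the $L^2$ tail term $N\lambda^{-2}M^2$ is not even bounded, let alone summable.

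Your final sentence points at the route that actually handles the $L^2$ case — a martingale approximation yielding an almost sure invariance principle, as in \cite{dedecker2012almost} — but be aware that this, combined with Strassen's law, delivers only the \emph{second} display. The \emph{first} display is a complete-convergence statement and is strictly stronger than the a.s.\ LIL (indeed the theorem explicitly derives the latter from the former); a plain ASIP with error $o(\sqrt{n\log\log n})$ does not give it, and one must still extract quantitative maximal-deviation estimates from the martingale decomposition. So as written, the proposal proves the result only under an extra higher-moment assumption on $f$, and the ASIP fallback proves only the weaker half of the statement.
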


\subsection{Spectral cocycle and top Lyapunov exponents} As usual, we use the notation $e(x) := e^{2\pi i x}$. Following \cite{bufetov2020spectral}, let $\zeta$ be a substitution over $\A = \{1,\dots,m\}$ and let $\zeta(a) = w_1\dots w_{k_a}$. 
\begin{definition}\label{generalCocycle}
	The \textit{spectral cocycle} is the cocycle over $\mathbb{T}^m = (\R/\Z)^m$ with $(a,b)\in\A^2$ entry given by
	\begin{align*}
	\mathscr{C}_{\zeta}(\xi,1)(a,b) &= \sum_{j=1}^{k_a} \delta_{w_j,b} \:e(\xi_{w_1} + \dots + \xi_{w_{j-1}}), \quad \xi \in \mathbb{T}^m \\
	\mathscr{C}_{\zeta}(\xi,n) &=  \mathscr{C}_{\zeta}((M^T_{\zeta})^{n-1}\xi,1) \dots \mathscr{C}_{\zeta}(\xi,1).
	\end{align*}
\end{definition}
The original definition actually uses a minus sign in the exponentials, but it is just a convention and it will not affect the calculations. Since we will only consider the spectrum of the substitution with the $\Z$-action, we will restrict this cocycle to the diagonal, i.e, for $\xi = \omega \vec{1}$, with $\omega \in [0,1)$. In the case of constant length substitutions (of length $q$), the base dynamics is the $q$-times map $\omega \mapsto q\omega$ (mod 1) over $\mathbb{T}$.
\begin{example}
	Let $\zeta_{TM}: 0 \mapsto 01, 1\mapsto 10$ be the Thue-Morse substitution. Then, for $(\xi_0,\xi_1)^T = \omega \vec{1}$, 
	\begin{align*}
	\mathscr{C}_{\zeta_{TM}}(\omega\vec{1},1)&=\begin{pmatrix}1&e(\omega)\\e(\omega)&1\end{pmatrix}, \\
	\mathscr{C}_{\zeta_{TM}}(\omega\vec{1},n) &= \begin{pmatrix}1&e(2^{n-1}\omega)\\e(2^{n-1}\omega)&1\end{pmatrix} \dots \begin{pmatrix}1&e(\omega)\\e(\omega)&1\end{pmatrix}.
	\end{align*}
\end{example}
The pointwise $\textit{top Lyapunov exponent}$ of the spectral cocycle $\mathscr{C}_{\zeta}$, is defined for $\omega\in [0,1)$ as
\[
\chi_\zeta^+(\omega) = \limsup_{n\to\infty} \dfrac{1}{n} \log \norm{\mathscr{C}_{\zeta}(\omega\vec{1},n)}.
\]
By the Furstenberg-Kesten theorem, this is in fact a limit for almost every $\omega$. In the case of a constant length substitution, since the $q$-times map $x \to qx$ (mod 1) ($q\in\N, q\geq 2$) is an ergodic transformation for the Lebesgue measure (denoted by Leb), the Furstenberg-Kesten theorem implies that $\chi_{\zeta}^+(\omega)$ is almost everywhere constant, value we will denote by $\chi_\zeta^+$. A reference for general theory on cocycles is \cite{viana2014lectures}.

\subsection{Results on binary constant length substitutions} We recall some results proved in \cite{BaakeBinary} concerning constant length substitutions on two letters. The main result gives an expression of the top Lyapunov exponent as the logarithmic Mahler measure of certain polynomial.
\begin{theorem}[\cite{BaakeBinary}, Theorem 1.1]\label{mainBaake}
	Let $\zeta$ be a substitution of length $q$ on two letters. Then 
	\[
	\chi^+_\zeta = \mathfrak{m}(P).
	\]
	for some polynomial $P\in\Z[X]$ with integer coefficients and degree $q-1$, where $\mathfrak{m}(P)$ denotes the logarithmic Mahler measure of $P$.
\end{theorem}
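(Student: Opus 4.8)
The plan is to use that on the diagonal the spectral cocycle is a product of $2\times2$ matrices over the single ergodic map $R_q\colon\omega\mapsto q\omega\pmod1$, to upper‑triangularize it in a \emph{fixed} basis, and then to read off the top exponent from two Birkhoff averages. I would write $A(\omega):=\mathscr C_\zeta(\omega\vec 1,1)$, so that $\mathscr C_\zeta(\omega\vec 1,n)=A(q^{n-1}\omega)\cdots A(q\omega)A(\omega)$ because $(M_\zeta^t)(\omega\vec 1)=q\omega\vec 1$.

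The key observation is that $A(\omega)_{a,b}=\sum_{j:\,\zeta(a)_j=b}e\big((j-1)\omega\big)$, so every row of $A(\omega)$ sums to $\Phi_q(\omega):=\sum_{j=1}^{q}e((j-1)\omega)=\tfrac{e(q\omega)-1}{e(\omega)-1}$; that is, $A(\omega)\vec 1=\Phi_q(\omega)\vec 1$ for \emph{all} $\omega$. Hence in the fixed basis $\{(1,1)^t,(1,-1)^t\}$ the cocycle becomes upper triangular: with $U=\tfrac1{\sqrt2}\begin{pmatrix}1&1\\1&-1\end{pmatrix}$,
\[
U^{-1}A(\omega)U=\begin{pmatrix}\Phi_q(\omega)&b(\omega)\\0&P(e(\omega))\end{pmatrix},
\]
the lower–right entry being forced by the trace, $P(e(\omega))=\operatorname{tr}A(\omega)-\Phi_q(\omega)=\sum_{j=1}^{q}\epsilon_j\,e((j-1)\omega)$ with $\epsilon_j\in\{-1,0,+1\}$ according to whether column $j$ of $\zeta$ is an identity column, a transposition column, or a coincidence column. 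Thus $P\in\Z[X]$ of degree $\le q-1$ (exactly $q-1$ when $\zeta$ is bijective, since then $\epsilon_q=\pm1$), $P\not\equiv0$ because $P\equiv0$ would mean $\zeta(0)=\zeta(1)$ and force $X_\zeta$ to be periodic, and $b$ is a trigonometric polynomial, hence bounded; in the bijective case $b\equiv0$.

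Next I would multiply triangular matrices: $\mathscr C_\zeta(\omega\vec 1,n)=U\!\begin{pmatrix}d^{(1)}_n&s_n\\0&d^{(2)}_n\end{pmatrix}\!U^{-1}$ with $d^{(1)}_n=\prod_{k=0}^{n-1}\Phi_q(q^k\omega)$, $d^{(2)}_n=\prod_{k=0}^{n-1}P(e(q^k\omega))$, and $s_n=\sum_{l=0}^{n-1}\big(\prod_{k=l+1}^{n-1}\Phi_q(q^k\omega)\big)b(q^l\omega)\big(\prod_{k=0}^{l-1}P(e(q^k\omega))\big)$. Since $\log|\Phi_q|$ and $\log|P(e(\cdot))|$ have only logarithmic, hence integrable, singularities, ergodicity of Lebesgue measure under $R_q$ gives, for a.e. $\omega$,
\[
\tfrac1n\log|d^{(1)}_n|\longrightarrow\int_0^1\log|\Phi_q|=0,\qquad\tfrac1n\log|d^{(2)}_n|\longrightarrow\int_0^1\log|P(e(\omega))|\,d\omega=\mathfrak m(P),
\]
the first integral vanishing because $\log|\Phi_q(\omega)|=\log|e(q\omega)-1|-\log|e(\omega)-1|$ and $\int_0^1\log|e(k\omega)-1|\,d\omega=\mathfrak m(X-1)=0$ for every $k\ge1$ by Jensen's formula. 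The diagonal entries already yield $\tfrac1n\log\|\mathscr C_\zeta(\omega\vec 1,n)\|\ge\max(0,\mathfrak m(P))+o(1)$; for the matching upper bound I would use $|b|\le C$ together with the uniform partial‑sum estimates $\big|\sum_{k=l+1}^{n-1}\log|\Phi_q(q^k\omega)|\big|=o(n)$ and $\sum_{k=0}^{l-1}\log|P(e(q^k\omega))|=l\,\mathfrak m(P)+o(n)$, uniformly in $0\le l\le n$, to bound each summand of $s_n$ by $Ce^{\,l\,\mathfrak m(P)+o(n)}\le e^{\,n\,\mathfrak m(P)+o(n)}$, so $\tfrac1n\log|s_n|\le\mathfrak m(P)+o(1)$. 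This gives $\chi^+_\zeta=\max(0,\mathfrak m(P))=\mathfrak m(P)$, the last equality because a nonzero integer polynomial has nonnegative logarithmic Mahler measure.

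The hard part will be precisely the control of the off‑diagonal term $s_n$: a priori the shear in an upper‑triangular product can outgrow both diagonal terms, and ruling this out requires the \emph{uniform}-in-$l$ form of the Birkhoff theorem (obtained by splitting the summation range at $\delta n$ and letting $\delta\to0$) rather than its pointwise form, used in tandem with the boundedness of $b$. Everything else is classical: ergodicity of $\omega\mapsto q\omega\pmod1$ and Jensen's formula identify the two candidate exponents as $0$ and $\mathfrak m(P)$, and in the bijective case the argument collapses to a direct diagonalization.
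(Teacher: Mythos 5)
Your proposal is essentially correct and follows the same route as the cited source \cite{BaakeBinary}: the present paper does not prove this theorem itself, it imports it, and the key structural fact it then uses is exactly your observation packaged as Lemma~\ref{product}, namely that in the basis $\{(1,1)^{t},(1,-1)^{t}\}$ the cocycle block on the diagonal torus is triangular with entries $\Phi_q(\omega)$ (cyclotomic, so Mahler measure $0$) and $P(e(\omega))$ (the Borwein-type polynomial with coefficients $\pm1$ or $0$ according to identity/flip/coincidence columns). Your identification of $P$ via $\mathrm{tr}\,A-\Phi_q$ is correct and recovers the same polynomial as in \cite{BaakeBinary}.

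The one place where you go beyond what the paper actually needs is the off-diagonal control. The paper is concerned with \emph{bijective} binary substitutions, where there are no coincidence columns, both $(1,1)^{t}$ and $(1,-1)^{t}$ are simultaneous eigenvectors, $b\equiv0$, and the cocycle is genuinely diagonal; Lemma~\ref{product} as stated (the cocycle maps $v=(1,-1)^{t}$ exactly to $\prod_kP(e(q^k\omega))\,v$) in fact \emph{requires} bijectivity, since for a coincidence column one has $(-1)^{\zeta(0)_j}\neq -(-1)^{\zeta(1)_j}$ and $v$ ceases to be an eigenvector. Your uniform-in-$l$ Birkhoff argument (splitting at $\delta n$, using boundedness of $b$, the telescoping identity $\prod_{k=l+1}^{n-1}\Phi_q(q^k\omega)=\tfrac{e(q^n\omega)-1}{e(q^{l+1}\omega)-1}$ together with a Borel--Cantelli bound on $\|q^m\omega\|_{\R/\Z}$, and $\mathfrak m(P)\ge0$) correctly shows the shear $s_n$ does not outgrow $e^{n\mathfrak m(P)+o(n)}$, so you obtain the general constant-length case, which is the form in which \cite{BaakeBinary} states the theorem. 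One cosmetic remark: for non-bijective substitutions $\epsilon_q$ may vanish, so your $P$ has degree $\le q-1$ rather than exactly $q-1$; this is harmless (pad by a power of $X$, which does not change the Mahler measure) but worth flagging against the literal wording of the statement.
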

In fact, in \cite{BaakeBinary} they also prove that the other Lyapunov exponent vanishes, but we will not use this fact. A crucial observation to prove this result is the next
\begin{lemma}[\cite{BaakeBinary}, Remark 4.3]\label{product}
	Let $\zeta$ be a substitution of length $q$ on two letters, $v = (1,-1)^T$ and $P$ the polynomial of Theorem \ref{mainBaake}. Then
	\[
	\mathscr{C}_{\zeta}(\omega\vec{1},n) v = \prod_{k=0}^{n-1} P(e(q^k\omega)) v
	\]
	and 
	\[
	\chi^+_\zeta = \lim_{n\to\infty} \dfrac{1}{n} \log \norm{\mathscr{C}_{\zeta}(\omega\vec{1},n) v}.
	\]
\end{lemma}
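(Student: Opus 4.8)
The plan is to exploit the very special $2\times2$ form that the spectral cocycle takes on the diagonal for a bijective binary substitution (which is the relevant setting here), the key point being that $v=(1,-1)^t$ is a \emph{common} eigenvector of all the one-step matrices $\mathscr{C}_\zeta(\,\cdot\,\vec{1},1)$.

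\textbf{Step 1 (a common eigenvector).} Write $\zeta(a)=w^a_1\cdots w^a_q$ with $w^a_j\in\{0,1\}$. On the diagonal $\xi=\omega\vec{1}$ one has $\xi_{w_1}+\dots+\xi_{w_{j-1}}=(j-1)\omega$, so the $(a,b)$-entry of $\mathscr{C}_\zeta(\omega\vec{1},1)$ is $\sum_{j=1}^{q}\delta_{w^a_j,b}\,e((j-1)\omega)$. Bijectivity means that for each $j$ the map $a\mapsto w^a_j$ is a permutation of $\{0,1\}$, so $w^1_j=1-w^0_j$, and hence the signs $\epsilon_j:=(-1)^{w^0_j}\in\{\pm1\}$ satisfy $(-1)^{w^1_j}=-\epsilon_j$. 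Setting $P(X):=\sum_{j=1}^{q}\epsilon_j X^{j-1}\in\Z[X]$ (degree $\le q-1$; up to normalisation this is the polynomial of Theorem~\ref{mainBaake}), a one-line computation shows that the two coordinates of $\mathscr{C}_\zeta(\omega\vec{1},1)v$ are $\sum_j(-1)^{w^0_j}e((j-1)\omega)=P(e(\omega))$ and $\sum_j(-1)^{w^1_j}e((j-1)\omega)=-P(e(\omega))$, that is, $\mathscr{C}_\zeta(\omega\vec{1},1)v=P(e(\omega))\,v$.

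\textbf{Step 2 (the product formula).} Since $\zeta$ has constant length $q$, the columns of $M_\zeta$ sum to $q$, so $M_\zeta^t\vec{1}=q\vec{1}$ and $(M_\zeta^t)^{n-1}(\omega\vec{1})=q^{n-1}\omega\,\vec{1}$. The defining relation of the cocycle then reads $\mathscr{C}_\zeta(\omega\vec{1},n)=\mathscr{C}_\zeta(q^{n-1}\omega\vec{1},1)\cdots\mathscr{C}_\zeta(q\omega\vec{1},1)\,\mathscr{C}_\zeta(\omega\vec{1},1)$, and applying this to $v$, with Step~1 used on each factor $\mathscr{C}_\zeta(q^k\omega\vec{1},1)$ (which again sends $v$ to $P(e(q^k\omega))\,v$), an induction on $n$ yields $\mathscr{C}_\zeta(\omega\vec{1},n)v=\bigl(\prod_{k=0}^{n-1}P(e(q^k\omega))\bigr)v$, which is the first assertion.

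\textbf{Step 3 (the exponential rate).} From the product formula, $\norm{\mathscr{C}_\zeta(\omega\vec{1},n)v}=\sqrt2\,\bigl|\prod_{k=0}^{n-1}P(e(q^k\omega))\bigr|$, so $\tfrac1n\log\norm{\mathscr{C}_\zeta(\omega\vec{1},n)v}=\tfrac1n\sum_{k=0}^{n-1}\log\abs{P(e(q^k\omega))}+o(1)$. As $P\in\Z[X]\setminus\{0\}$, the function $\log\abs{P(e(\cdot))}$ lies in $L^1([0,1))$ (its singularities, at the finitely many zeros of $P$ on the unit circle, are only logarithmic), and Jensen's formula identifies $\int_0^1\log\abs{P(e(t))}\,dt=\mathfrak m(P)$. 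Since $t\mapsto qt\bmod1$ is ergodic for Lebesgue measure, Birkhoff's theorem gives, for a.e.\ $\omega$, $\lim_{n\to\infty}\tfrac1n\log\norm{\mathscr{C}_\zeta(\omega\vec{1},n)v}=\mathfrak m(P)=\chi_\zeta^+$, the last equality by Theorem~\ref{mainBaake}. (Alternatively, without invoking Theorem~\ref{mainBaake}: each $\mathscr{C}_\zeta(q^k\omega\vec{1},1)$ is of the form $a\,\mathrm{Id}+b\,J$ with $J$ the off-diagonal permutation matrix, hence all are simultaneously diagonal in the fixed basis $\{(1,1)^t,v\}$; the eigenvalue of $(1,1)^t$ in the $k$-th factor is the geometric sum $\sum_{j=1}^{q}e((j-1)q^k\omega)=\frac{e(q^{k+1}\omega)-1}{e(q^k\omega)-1}$, whose product over $k<n$ telescopes to $\frac{e(q^n\omega)-1}{e(\omega)-1}$, bounded in $n$ for each fixed $\omega\in(0,1)$; so the Perron direction contributes exponential rate $0$, and since $\mathfrak m(P)\ge0$ always, the growth rate of $\norm{\mathscr{C}_\zeta(\omega\vec{1},n)}$ equals that of $\norm{\mathscr{C}_\zeta(\omega\vec{1},n)v}$, namely $\chi_\zeta^+(\omega)$.)

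\textbf{Expected main obstacle.} Steps~1 and~2 are purely algebraic and short; the only substance is the observation that bijectivity makes $v$ a common eigenvector on the diagonal. The genuinely delicate point is the integrability $\log\abs{P(e(\cdot))}\in L^1$, which is what makes Birkhoff's theorem applicable and lets one read off an \emph{honest} limit, equal (via Jensen) to $\mathfrak m(P)$; a secondary point is the telescoping of the Perron eigenvalue, needed to confirm that no exponential growth is lost in passing from $\norm{\mathscr{C}_\zeta(\omega\vec{1},n)}$ to $\norm{\mathscr{C}_\zeta(\omega\vec{1},n)v}$.
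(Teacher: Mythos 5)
The paper states this lemma as a citation (\cite{BaakeBinary}, Remark~4.3) and gives no proof of its own, so there is no internal argument to compare against; your reconstruction has to be judged on its own terms. Under the hypothesis you announce at the outset -- that $\zeta$ is \emph{bijective} -- your proof is correct and clean: Step~1 correctly identifies $v=(1,-1)^{T}$ as a common eigenvector, Step~2 is a routine chain using $M_\zeta^{t}\vec{1}=q\vec{1}$, and Step~3 handles the integrability of $\log\abs{P(e(\cdot))}$ and invokes Birkhoff correctly. The ``alternative'' at the end of Step~3 (circulant form $a\,\mathrm{Id}+b\,J$, telescoping of the Perron eigenvalue, $\mathfrak m(P)\ge 0$) is actually the more self-contained route, since the primary path uses Theorem~\ref{mainBaake} to close the loop and is therefore circular as a derivation of $\chi_\zeta^+=\mathfrak m(P)$.

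There is, however, a genuine gap relative to the statement as written. The lemma assumes only that $\zeta$ has constant length $q$ on two letters, \emph{not} that it is bijective, and Theorem~\ref{deviation} later applies it in exactly that generality. But if $\zeta$ has a coincidence column (i.e.\ some $j$ with $\zeta(0)_j=\zeta(1)_j$), then $v$ is \emph{not} an eigenvector of $\mathscr{C}_\zeta(\omega\vec{1},1)$, so the displayed identity $\mathscr{C}_\zeta(\omega\vec{1},n)v=\prod_k P(e(q^k\omega))\,v$ is false. A concrete counterexample is the period-doubling substitution $\zeta_3:0\mapsto 01,\ 1\mapsto 00$ from Example~2.9: one computes $\mathscr{C}_{\zeta_3}(\omega\vec{1},1)=\bigl(\begin{smallmatrix}1&e(\omega)\\1+e(\omega)&0\end{smallmatrix}\bigr)$, and $\mathscr{C}_{\zeta_3}(\omega\vec{1},1)v=(1-e(\omega),\,1+e(\omega))^{T}$, which is not proportional to $v$. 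What is always true is that every row of $\mathscr{C}_\zeta(\omega\vec{1},1)$ sums to the same geometric sum $G(\omega)=\sum_{j=1}^{q}e((j-1)\omega)$, so $(1,1)^{T}$ is a right eigenvector; in the basis $\{(1,1)^{T},v\}$ the cocycle matrix is \emph{upper triangular} with diagonal $(G(\omega),P(e(\omega)))$, where $P$ is built from the bijective columns only (and the $(1,2)$ entry from the coincidence columns). The product of such matrices remains upper triangular with $(2,2)$ entry $\prod_k P(e(q^k\omega))$, which is what actually feeds Theorem~\ref{mainBaake} and the second assertion of the lemma; but the vector identity only holds for bijective $\zeta$. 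Your argument, as written, silently restricts to the bijective case and would break for Theorem~\ref{deviation}'s general setting. The fix is to replace ``$v$ is a common eigenvector'' by ``$(1,1)^{T}$ is a common eigenvector and the cocycle is triangular in $\{(1,1)^{T},v\}$,'' then read off the second diagonal entry; the Lyapunov exponent conclusion follows because the Lyapunov spectrum of a triangular cocycle is determined by the diagonal, and the $G$-direction contributes exponent~$0$ by the telescoping you already noted.
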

Let us mention that the polynomial $P$ is explicit and depends only on the bijective columns of the substitution, that is, on elements $j\in\{1,\dots,q\}$ such that $\zeta(0)_j \neq \zeta(1)_j$.

\subsection{Twisted Birkhoff sums and twisted correlations} In this subsection we define a special kind sum appearing in the calculations of the top Lyapunov exponents for factors of Thue-Morse, but whose study might be interesting by itself. First, we recall the notion of twisted Birkhoff sum:
\begin{definition} Let $(X,\mu,T)$ be a measure-preserving system, $f$ a measurable function and $\omega\in[0,1)$. The \textit{twisted Birkhoff sum} $S^{f}_N(\omega,x)$ (at time $N$, of parameter $\omega$) of $f$ at a point $x \in X $ is defined by
	\[
	S^{f}_N(\omega,x) = \sum_{k=0}^{N-1} f(T^kx)e(k\omega).
	\]
\end{definition}
This type of sum (or integral in the continuous case) is of major importance in several recent works like \cite{marshall2020modulus,trevino2020quantitative,JEP_2021__8__279_0,avila2023quantitative} since uniform (on $x\in X$) upper bounds allow one to obtain lower bounds for the dimension of spectral measures of the dynamical systems involved: substitutions, interval exchange transformations, suspensions over S-adic systems (including translation surfaces) and higher dimensional tilings.\\

A generalization of this kind of sum is the next concept. For more generalizations of multiple ergodic sums see for example \cite{eisner2013linear} and references therein.
\begin{definition}\label{defTwsited}
	Let $(X,\mu,T)$ be a dynamical system, $f$ a measurable function. Let $a_1<\dots< a_t$ positive integers and $\omega\in[0,1)$. The (accumulated) \textit{twisted correlation} $C^{f}_N(a_1,\dots,a_t,\omega,x)$ (at time $N$, of parameters $a_1,\dots,a_t$ and $\omega$) of $f$ at a point $x \in X $ is defined by
	\[
	C^{f}_N(a_1,\dots,a_t,\omega,x) = \sum_{k=0}^{N-1} f(T^kx)f(T^{k+a_1}x)\dots f(T^{k+a_t}x)e(k\omega).
	\]
\end{definition}
We will study this concept in the case of subshifts arising from bijective substitutions on two letters, and for the indicator functions $\mathds{1}_{[0]},\mathds{1}_{[1]}$. It turns out that better algebraic properties emerge if we consider the function $\mathds{1}_{[0]}-\mathds{1}_{[1]}$, and the twisted correlation of the former functions may be bounded from above using a finite combination of twisted correlations of the latter function. For this last function we denote its twisted correlations by $C^{\pm}_N(a_1,\dots,a_t,\omega,\mathbf{x})$. For example, if we consider $\mathbf{u}$ the Thue-Morse sequence (note that we are only interested in the nonnegative coordinates), the twisted correlation will be of the form
\[
C^{\pm}_N(a_1,\dots,a_t,\omega,\mathbf{u}) = \sum_{k=0}^{N-1} (-1)^{s_2(k)+s_2(k+a_1) + \ldots +s_2(k+a_t)}e(k\omega).
\]
Of course, for the other fixed point of the Thue-Morse substitution denoted by $\mathbf{v}$, we have the next simple relation for the twisted correlation (recall that $\mathbf{v}$ is just $\mathbf{u}$ after swaping 0's and 1's):
\[
C^{\pm}_N(a_1,\dots,a_t,\omega,\mathbf{u}) = (-1)^{t+1} C^{\pm}_N(a_1,\dots,a_t,\omega,\mathbf{v}).
\]

Where do these sums come up? They appear naturally when studying the twisted Birkhoff sums of (topological) subshift factors of a substitution (on two letters) subshift: by Theorem \ref{CHL}, every factor $\pi$ comes from a map $\widehat{\pi}: \mathcal{L}_{2r+1} \to \mathcal{B}$, for some $r\geq0$. By Theorem \ref{du00} factors of substitution systems are conjugate to substitution subshifts. Let $a,b \in \mathcal{B}$, $\sigma$ the factor substitution and $\mathbf{w}$ any element of $X_\sigma$ starting with $\sigma^n(a)$. If $\mathbf{x}\in \pi^{-1}(\{\mathbf{w}\})$, we have the next relation for $f=\mathds{1}_{[b]}$:
\begin{align*}
S^f_{\abs{\sigma^n(a)}}(\omega,\mathbf{w}) &= \sum_{w: \widehat{\pi}(w) = a} \sum_{k=0}^{\abs{\sigma^n(a)}-1} \mathds{1}_{[w_{-r}\dots w_{r}]_{-r}}(T^k \mathbf{x}) e(k\omega) \\
&= \sum_{w: \widehat{\pi}(w) = a} \sum_{k=0}^{\abs{\sigma^n(a)}-1} \mathds{1}_{[w_{-r}]}(T^{k-r}\mathbf{x})\cdot\ldots\cdot\mathds{1}_{[w_{r}]}(T^{k+r}\mathbf{x}) e(k\omega)
\end{align*}

Note in the last equality there are some terms depending on negative coordinates, but the number of terms is bounded (in fact computable according to Theorem \ref{DuLe}) as $n$ goes to infinity, so it will not interfere with determining asymptotic bounds. Using the simple relation between the indicator functions, we obtain the twisted Birkhoff sum of the factor as a sum of different twisted correlation in the Thue-Morse subshift. The details are explained in Section 6.\\

\begin{remark}
	The case $\omega = 0$, i.e., of correlations of higher order, has been recently studied in \cite{baake2023correlations}.
\end{remark}

\section{Results}
\subsection{Top Lyapunov exponent of a general substitution} A simple observation leads to a basic fact about the top Lyapunov exponent of an arbitrary substitution. Here and in the proof of Proposition \ref{productbound}, we use the following simple fact: consider $f(x) = \log(\abs{2\cos(\pi x)})$, $g(x) = \log(\abs{2\sin(\pi x)})$. It is an excercise to check that $f,g\in L^1([0,1],\text{Leb})$ and both have mean equal to zero.
\begin{prop}\label{Positivity}
	Let $\zeta$ be an arbitrary substitution on $\A$. Then, $\chi_\zeta^+(\omega)\geq 0$ for Lebesgue almost every $\omega \in [0,1)$.
\end{prop}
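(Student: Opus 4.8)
The plan is to bound $\norm{\mathscr{C}_{\zeta}(\omega\vec 1,n)}$ from below by the modulus of a single Dirichlet-type trigonometric sum, and then to combine this with the mean-zero property of $g(x)=\log\lvert 2\sin(\pi x)\rvert$. Concretely, unwinding the definition and iterating the cocycle relation, one checks by induction on $n$ that
\[
\mathscr{C}_{\zeta}(\omega\vec 1,n)(a,b)=\sum_{\substack{1\le j\le \abs{\zeta^{n}(a)}\\ \zeta^{n}(a)_j=b}} e((j-1)\omega),\qquad a,b\in\A .
\]
Summing this over $b\in\A$, the $a$-th coordinate of $\mathscr{C}_{\zeta}(\omega\vec 1,n)\vec 1$ is the geometric sum $\sum_{i=0}^{\abs{\zeta^{n}(a)}-1}e(i\omega)$, whose modulus equals $\abs{\sin(\pi\abs{\zeta^{n}(a)}\omega)}/\abs{\sin(\pi\omega)}$ for $\omega\in(0,1)$. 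Fix a letter $a\in\A$ and set $N_n=\abs{\zeta^{n}(a)}\in\N$. Since the top Lyapunov exponent does not depend on the choice of matrix norm, we may work with the operator norm induced by the Euclidean norm, so that for every $\omega\in(0,1)$
\[
\norm{\mathscr{C}_{\zeta}(\omega\vec 1,n)}\ \ge\ \frac{\norm{\mathscr{C}_{\zeta}(\omega\vec 1,n)\vec 1}}{\norm{\vec 1}}\ \ge\ \frac{1}{\sqrt m}\cdot\frac{\abs{\sin(\pi N_n\omega)}}{\abs{\sin(\pi\omega)}}.
\]

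Taking $\tfrac1n\log$ and $\limsup$, and using that for fixed $\omega\in(0,1)$ the quantities $\tfrac1{2n}\log m$ and $\tfrac1n\log\abs{\sin(\pi\omega)}$ tend to $0$, this gives $\chi^{+}_{\zeta}(\omega)\ge \limsup_{n\to\infty}\tfrac1n g(N_n\omega)$. It remains to prove that $\limsup_{n\to\infty}\tfrac1n g(N_n\omega)=0$ for Lebesgue-a.e.\ $\omega$; note that the $\limsup$ is trivially $\le 0$ since $g\le\log 2$. For the reverse inequality I would use only that $g$ is $1$-periodic, lies in $L^1([0,1],\operatorname{Leb})$, and has mean zero: the change of variables $t=N_n\omega$ gives $\int_0^1 \tfrac1n g(N_n\omega)\,d\omega=\tfrac1n\int_0^1 g(t)\,dt=0$ for every $n$, while $\tfrac1n g(N_n\omega)\le \log 2$, so the reverse Fatou lemma yields $\int_0^1 \bigl(\limsup_n \tfrac1n g(N_n\omega)\bigr)\,d\omega\ \ge\ \limsup_n \int_0^1 \tfrac1n g(N_n\omega)\,d\omega\ =\ 0$. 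A nonpositive function with nonnegative integral vanishes a.e., hence $\limsup_n \tfrac1n g(N_n\omega)=0$ for a.e.\ $\omega$, and so $\chi^{+}_{\zeta}(\omega)\ge 0$ a.e. (When $\zeta$ is of constant length $q$ one has $N_n=q^n$, and one may instead conclude from Birkhoff's theorem applied to the ergodic map $\omega\mapsto q\omega$ and the mean-zero function $g$, which even gives $\tfrac1n\sum_{k=0}^{n-1}g(q^k\omega)\to 0$ a.e.)

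The only ingredient that is not purely formal is the entrywise identity for $\mathscr{C}_{\zeta}(\omega\vec 1,n)$; once it is available everything else is elementary. The mild bookkeeping point in proving it by induction is that the $k$-th factor in the cocycle product is evaluated at $(M^{t}_{\zeta})^{k}(\omega\vec 1)=\omega\,(M^{t}_{\zeta})^{k}\vec 1$ rather than at $\omega\vec 1$, but this is harmless here: it only replaces each factor $\mathscr{C}_{\zeta}(\cdot,1)$ by a matrix whose entries are polynomials in $e(\omega)$ with non-negative integer coefficients, which is exactly the structure that makes the row sums telescope into the geometric sum above. I do not expect any serious obstacle beyond correctly setting up this identity.
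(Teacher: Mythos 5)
Your proof is correct. The core reduction is the same as the paper's: both arguments lower bound the cocycle norm by the modulus of the geometric sum $\sum_{i=0}^{\abs{\zeta^n(a)}-1}e(i\omega)$ attached to a fixed letter $a$ (the paper does this via the $\ell^\infty$ matrix norm and a row sum, you via the operator norm applied to $\vec 1$ — these are equivalent up to a constant), and both then need to show that $\tfrac{1}{n}\log\lvert\sin(\pi N_n\omega)\rvert\to 0$ for a.e.\ $\omega$, where $N_n=\abs{\zeta^n(a)}$. Where you differ is on this last measure-theoretic step. The paper uses the invariance of Lebesgue measure under $x\mapsto N_nx\ (\mathrm{mod}\ 1)$ to reduce to a summability estimate for the tail measures $\mathrm{Leb}(\{\lvert f\rvert\geq n\epsilon\})$ and then applies Borel--Cantelli. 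You instead observe that $\int_0^1 g(N_n\omega)\,d\omega=0$ for every $n$ (periodicity of $g$ and change of variables, which encodes the same invariance fact), that the trivial bound $g\leq\log 2$ forces $\limsup_n\tfrac{1}{n}g(N_n\omega)\leq 0$ everywhere, and then invoke the reverse Fatou lemma to get the opposite inequality in integral form and conclude the limsup is $0$ a.e. This reverse-Fatou route is a genuinely different and somewhat cleaner deduction from the same two facts ($g\in L^1$, $\int g=0$); the paper's Borel--Cantelli argument is slightly heavier but, as a byproduct, gives a quantitative rate on how often $\lvert g(N_n\omega)\rvert$ can exceed $n\epsilon$, which your approach does not. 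Your entrywise identity for $\mathscr{C}_\zeta(\omega\vec 1,n)(a,b)$ and the bookkeeping remark about the arguments $(M_\zeta^t)^k(\omega\vec 1)$ are correct, and are essentially what the paper compresses into the one-line observation that every exponential $1,\dots,e((\abs{\zeta^n(a)}-1)\omega)$ appears exactly once across row $a$.
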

\begin{proof}
	Note that for the line of the matrix $\mathscr{C}_{\zeta}(\omega,n)$ indexed by $a\in\A$, all complex exponentials $1,\ldots,e((\abs{\zeta^n(a)}-1)\omega)$ appear as a summand in some entry of this line. From this fact we deduce rapidly,
	\begin{align*}
	\norm{ \mathscr{C}_{\zeta}(\omega,n) }_{\infty} &\geq \sum_{b\in\A} \abs{\mathscr{C}_{\zeta}(\omega,n)(a,b) }\\ 
	&\geq \abs{1+\ldots+e((\abs{\zeta^n(a)}-1)\omega)} \\
	&= \abs{2\cos(\pi\{\abs{\zeta^n(a)}\omega\})}/\abs{e(\omega) - 1}.
	\end{align*} 
	A uniform distribution result may serve to conclude. Alternatively, an elementary argument is the next one: call $f(x) = \log(\abs{2\cos(\pi x)})$. We will conclude the proof by means of the next Borel-Cantelli based argument: let $\epsilon>0$ and $\mathfrak{G}_n(\epsilon) = \mathfrak{G}_n = \{\omega\in[0,1) | f(\{\abs{\zeta^n(a)}\omega\}) \geq n\epsilon\}$. Since the Lebesgue measure is invariant for all $q$-times maps,
	\begin{align*}
	\text{Leb}(\mathfrak{G}_n) &= \text{Leb}\left(\{\omega\in[0,1) \,|\, f(\{\abs{\zeta^n(a)}\omega\}) \geq n\epsilon\}\right)\\
	&= \text{Leb}\left( \{\omega\in[0,1) \,|\, \abs{f(\omega)} \geq n\epsilon\} \right) \\ 
	&= \sum_{k\geq n} \text{Leb}\left( \left\{\omega\in[0,1) \,|\, k\leq\dfrac{\abs{f(\omega)}}{\epsilon} < k+1\right\} \right).
	\end{align*}
	Then,
	\begin{align*}
	\sum_{n\geq 1}\text{Leb}(\mathfrak{G}_n) &= \sum_{n\geq1} \sum_{k\geq n} \text{Leb}\left( \left\{\omega\in[0,1) \,|\, k\leq\dfrac{\abs{f(\omega)}}{\epsilon} < k+1\right\} \right)\\
	&= \sum_{k\geq1} k\,\text{Leb}\left( \left\{\omega\in[0,1) \,|\, k\leq\dfrac{\abs{f(\omega)}}{\epsilon} < k+1\right\} \right)\\
	&\leq \int_{0}^{1} \dfrac{\abs{f(x)}}{\epsilon}dx < +\infty.
	\end{align*}
	From Borel-Cantelli lemma we deduce that for any $\epsilon>0$, $\text{Leb}\left(\liminf_n \mathfrak{G}_n(\epsilon) \right)= 1$. Considering $\mathfrak{G} = \bigcap_i \liminf_n \mathfrak{G}_n(1/i)$ we have Leb($\mathfrak{G}$) = 1 and for any $\omega\in \mathfrak{G}$, $\lim_n \dfrac{f(\{\abs{\zeta^n(a)}\omega\})}{n} = 0$. 
\end{proof}
\begin{remark}
	A similar result is proved in Theorem 3.34 of \cite{baake2019renormalisation}, but for almost every parameter in the line generated by the Perron-Frobenius eigenvector instead of the vector $\vec{1}$ as in our case. It is worth mentioning also that the top Lyapunov exponent of the general cocyle as defined in Definition \ref{generalCocycle} is nonnegative if we assume the substitution matrix has no eigenvalues that are roots of unity, as shown in Corollary 2.8 of \cite{bufetov2022substitution}.
\end{remark}

To finish this section we prove a result regarding the rest of the Lyapunov spectrum of a constant length substitution. 
\begin{prop}
	Let $\zeta$ be a substitution of constant length $q$ on an alphabet $\A$ of cardinality $d$. Then there exists a Lyapunov exponent $\chi_j(\omega) = 0$ almost surely.
\end{prop}

\begin{proof}
	Indeed, note that the vector $\vec{1}\in \R^d$ is a rigth eigenvector of $\mathscr{C}_{\zeta}(\omega\vec{1},n)$ with eigenvalue
	\[
	\prod_{j=0}^{n-1} 1 + e(q^j\omega) + \dots + e(q^{(d-1)j}\omega)= \prod_{j=0}^{n-1} P_d(e(q^j\omega)).
	\]
	We know $\mathfrak{m}(P_d) = 0$, since it is the product of cyclotomic polynomials. Therefore, if $\vec{1}\in W_{j}$ for some $j\in{1,\dots,d}$ in the filtration, by the Birkhoff ergodic theorem
	\[
	\lim_{n\to\infty} \dfrac{1}{n}\log \norm{\mathscr{C}_{\zeta}(\omega\vec{1},n)\vec{1} } = \lim_{n\to\infty} \dfrac{1}{n} \sum_{j=0}^{n-1} \log(\abs{P_d(q^j\omega)}) = \mathfrak{m}(P_d) = 0,
	\]
	for Lebesgue almost every $\omega$. That is, the Lyapunov exponent $\chi_j(\omega) = 0$ almost surely.
\end{proof}
\subsection{The Thue-Morse top Lyapunov exponent}
The fact that the top Lyapunov exponent for Thue-Morse is equal to zero is not a new result, in fact the result is really explicit for all constant length substitutions on two letters: see \cite{BaakeBinary}. As hinted in the introduction, this is related to the growth of the Thuse-Morse polynomials. In this subsection we will recall the proof of this fact and then prove an explicit subexponential upper bound for the Thue-Morse polynomials.\\

Let $v = (1,-1)^T$ and note that $v$ is an eigenvector for all matrices appearing in the cocycle:
\begin{align*}
	\mathscr{C}_{\zeta_{TM}}(\omega\vec{1},n) v &= \begin{pmatrix}1&e(2^{n-1}\omega)\\e(2^{n-1}\omega)&1\end{pmatrix} \dots \begin{pmatrix}1&e(\omega)\\e(\omega)&1\end{pmatrix} v\\
	&= \prod_{j=0}^{n-1} \left(1-e(\omega2^j)\right) v.
\end{align*}
As pointed out in \cite{BaakeBinary}, it is enough to study this direction to calculate the top Lyapunov exponent. Set $f(\omega) = 2\sin(\pi \omega)$ ($= \abs{1-e(\omega)}$). Since $S:[0,1) \longrightarrow [0,1)$ defined by $Sx = 2x$ (mod 1) is ergodic for the Lebesgue measure, we have that for almost every $\omega$,
\begin{align*}
\dfrac{1}{n}\log \norm{\mathscr{C}_{\zeta_{TM}}(\omega\vec{1},n)} &=
\dfrac{1}{n}\log \prod_{j=0}^{n-1} f(S^n\omega) + \dfrac{1}{n}\log(\norm{v})\\
&= \dfrac{1}{n} \sum_{j=0}^{n-1} \log(f(S^n\omega)) + \dfrac{1}{n}\log(\norm{v})\\
&\longrightarrow \int_0^1 \log(f(x))dx = 0.
\end{align*}
In fact, we can show a finer estimate for the growth of the product $p_n(\omega) := \prod_{j=0}^{n} 2\sin (\pi\{\omega2^j\})$.
\begin{prop}\label{productbound}
	There exists a positive constant $B$ such that for almost all $\omega$, there is a positive integer $n_0(\omega)$ such that for all $n\geq n_0(\omega)$,
	\[
	\max (p_n(\omega),p_n^{-1}(\omega)) \leq e^{B\sqrt{n\log\log(n)}},
	\]
	where $p_n(\omega)= \prod_{j=0}^{n} 2\sin(\pi\{\omega 2^j\})$.
\end{prop}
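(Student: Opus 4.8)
The plan is to take logarithms and recognise $\log p_n$ as a Birkhoff sum over the doubling map, to which the bounded law of the iterated logarithm of Theorem~\ref{thmGouezel} applies. Write $S\colon [0,1)\to[0,1)$ for $Sx = 2x \ (\mathrm{mod}\ 1)$ and $g(x) = \log\big(2\sin(\pi x)\big) = \log\abs{2\sin(\pi x)}$. For every $\omega\in[0,1)$ that is not a dyadic rational one has $\{\omega 2^j\} = S^j\omega \neq 0$ for all $j$, hence $p_n(\omega) = \prod_{j=0}^{n} 2\sin(\pi S^j\omega) > 0$ (so that $p_n^{-1}(\omega)$ is well defined) and
\[
\log p_n(\omega) = \sum_{j=0}^{n} g(S^j\omega).
\]
Since the dyadic rationals form a Lebesgue-null set, it suffices to show $\big|\sum_{j=0}^{n} g(S^j\omega)\big| \le B\sqrt{n\log\log n}$ for all large $n$, for Lebesgue-almost every $\omega$.

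Next I would check that $g$ meets the hypotheses of Theorem~\ref{thmGouezel} with $\nu = \text{Leb}$. The doubling map is the prototypical uniformly expanding map of the unit interval in the sense of \cite{dedecker2012almost}, and Lebesgue measure is an absolutely continuous invariant measure for it. On $(0,\tfrac12]$ the function $g$ increases from $-\infty$ to $\log 2$, and on $[\tfrac12,1)$ it decreases from $\log 2$ to $-\infty$; thus $g = g\,\mathds{1}_{(0,1/2]} + g\,\mathds{1}_{(1/2,1)}$ is a finite sum of functions monotonic on an interval and null elsewhere. Moreover $\int_0^1 g(x)^2\,dx < \infty$, since near the two singular points $x=0,1$ the integrand blows up only like $(\log x)^2$, which is integrable; letting $M$ be the square root of this integral, we get $g \in \mathrm{Mon}_2(M,\text{Leb}) \subset \mathrm{Mon}^c_2(M,\text{Leb})$. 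Finally $\nu(g) = \int_0^1 \log\abs{2\sin(\pi x)}\,dx = 0$ (this is the logarithmic Mahler measure of $X-1$, and equivalently follows from $\int_0^1 \log\sin(\pi x)\,dx = -\log 2$), as was already used in the preceding a.e.-convergence computation.

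Applying Theorem~\ref{thmGouezel} to the Birkhoff sums of length $N = n+1$ then yields a constant $A\ge 0$ such that, for almost every $\omega$,
\[
\limsup_{n\to\infty} \frac{1}{\sqrt{(n+1)\log\log(n+1)}}\, \Big| \sum_{j=0}^{n} g(S^j\omega) \Big| \le A .
\]
As $\sqrt{(n+1)\log\log(n+1)} \sim \sqrt{n\log\log n}$, for almost every $\omega$ there is $n_0(\omega)$ with $\abs{\log p_n(\omega)} \le (A+1)\sqrt{n\log\log n}$ for all $n\ge n_0(\omega)$; taking $B = A+1$ gives $\max\big(p_n(\omega),p_n^{-1}(\omega)\big) \le e^{B\sqrt{n\log\log n}}$, which is the claim.

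The only real content is the verification that $g$ lies in $\mathrm{Mon}^c_2$ — that its logarithmic singularities are mild enough to keep $g$ square-integrable while piecewise monotonicity still holds — together with the standard facts that $g$ has zero mean and that the doubling map fits the uniformly-expanding framework of \cite{dedecker2012almost}; once these are in place the statement is an immediate consequence of the bounded law of the iterated logarithm. I expect the bookkeeping around the precise definition of ``uniformly expanding'' used in \cite{dedecker2012almost} to be the only delicate point.
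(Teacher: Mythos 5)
Your proof is correct and takes essentially the same route as the paper: both recognize $\log p_n$ as a Birkhoff sum of $g(x)=\log(2\sin\pi x)$ over the doubling map, split $g$ into its monotone pieces on $(0,1/2]$ and $(1/2,1)$ to place it in $\mathrm{Mon}_2(M,\mathrm{Leb})$, verify zero mean and square-integrability of the logarithmic singularity, and invoke Theorem~\ref{thmGouezel}. Your added remarks (excluding dyadic rationals, the shift from $N=n+1$ to $n$) are harmless bookkeeping that the paper leaves implicit.
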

\begin{proof}
	Fix $\epsilon>0$. Consider the function of the unit interval $f(x)= \log (2\sin(\pi x))$. One can check that $\text{Leb}(f)=0$ and $\text{Leb}(\abs{f}^2)\leq 2^2$. Set $f_1,f_2$ as 
	\begin{align*}
	&f_1(x) = 2\log (2\sin(\pi x)) \text{ if } x\in(0,1/2] \text{ and } f_1(x) = 0 \text{ otherwise},\\
	&f_2(x) = 2\log (2\sin(\pi x)) \text{ if } x\in[1/2,1) \text{ and } f_2(x) = 0 \text{ otherwise}.
	\end{align*}
	Then, $f = \dfrac{1}{2}f_1 + \dfrac{1}{2}f_2$ and $f \in \text{Mon}_2(2,\text{Leb})$. We may apply Theorem \ref{thmGouezel} to the doubling map $S$ and the function $f$: we get for almost every $\omega$ there exists $n_0(\omega)\geq1$ such that for every $n\geq n_0(\omega)$,
	\[
	\left| \sum_{j=0}^{n}f\circ S^j(\omega) \right| \leq \underbrace{(A+\epsilon)}_{=: B}\sqrt{n\log\log(n)}.
	\]
	To obtain the desired inequality we take exponential both sides, by noticing that
	\[
	e^{\abs{x}} = e^{\max(x,-x)} = \max(e^x,e^{-x}).
	\]
\end{proof}
\subsection{Top Lyapunov exponent for a binary constant length substitution} We generalize the calculations made in the last subsection in this more general setting: we will show an explicit subexponential deviation from the expected exponential growth for the spectral cocycle.
\begin{theorem}\label{deviation}
	Let $\zeta$ be a substitution of length $q$ on two letters and $I = \chi^+_{\zeta}$. There exists a positive constant $B$ such that for almost all $\omega$ there exists a positive integer $n_0(\omega)$ such that for all $n\geq n_0(\omega)$,
	\[
	e^{-B\sqrt{n\log\log(n)}} \leq \dfrac{\norm{\mathscr{C}_{\zeta}(\omega\vec{1},n)}}{e^{nI}} \leq  e^{B\sqrt{n\log\log(n)}}.
	\]
\end{theorem}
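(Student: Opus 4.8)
The plan is to reduce the matrix norm to two scalar quantities, exactly as in the Thue--Morse computation above, and then feed each of them into the bounded law of iterated logarithm of Theorem~\ref{thmGouezel}.

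\emph{Linear algebra.} On the diagonal $\xi=\omega\vec1$ every row of the one-step matrix $\mathscr{C}_\zeta(\omega\vec1,1)$ sums to $Q(e(\omega))$, where $Q(z)=1+z+\dots+z^{q-1}$; hence $\vec1$ is a right eigenvector of each factor $\mathscr{C}_\zeta(q^k\omega\vec1,1)$ with eigenvalue $Q(e(q^k\omega))$, and combining this with Lemma~\ref{product} we get, for every $n\ge1$,
\[
\mathscr{C}_\zeta(\omega\vec1,n)\vec1=\Big(\prod_{k=0}^{n-1}Q(e(q^k\omega))\Big)\vec1,\qquad \mathscr{C}_\zeta(\omega\vec1,n)v=\Big(\prod_{k=0}^{n-1}P(e(q^k\omega))\Big)v .
\]
Since $\{\vec1,v\}$ is a basis of $\C^2$, conjugation by the fixed invertible matrix $S$ with columns $\vec1,v$ diagonalizes $\mathscr{C}_\zeta(\omega\vec1,n)$; writing $\lambda_{\vec1}(n)=\prod_{k=0}^{n-1}Q(e(q^k\omega))$, $\lambda_v(n)=\prod_{k=0}^{n-1}P(e(q^k\omega))$ and $\kappa=\norm{S}\,\norm{S^{-1}}$ (an absolute constant), we obtain
\[
\kappa^{-1}\max\big(\abs{\lambda_{\vec1}(n)},\abs{\lambda_v(n)}\big)\ \le\ \norm{\mathscr{C}_\zeta(\omega\vec1,n)}\ \le\ \kappa\max\big(\abs{\lambda_{\vec1}(n)},\abs{\lambda_v(n)}\big).
\]

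\emph{Applying the bounded LIL.} Let $S_q\colon x\mapsto qx \pmod1$ be the $q$-times map (uniformly expanding, with Lebesgue as absolutely continuous invariant measure) and set $g_P(x)=\log\abs{P(e(x))}$, $g_Q(x)=\log\abs{Q(e(x))}$. Each of $g_P,g_Q$ is real-analytic on $[0,1)$ off the finitely many zeros of the trigonometric polynomial $\abs{P(e(x))}^2$, resp.\ $\abs{Q(e(x))}^2$, near which it has a logarithmic (hence square-integrable) singularity; cutting $[0,1)$ at these zeros and at the finitely many critical points exhibits $g_P,g_Q$ as finite sums of functions monotone on an interval and null elsewhere, so $g_P,g_Q\in\mathrm{Mon}^c_2(M,\text{Leb})$ for $M$ large. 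Moreover $\text{Leb}(g_P)=\mathfrak{m}(P)=I$ by Theorem~\ref{mainBaake} (or by Birkhoff's theorem together with Lemma~\ref{product}), while $\text{Leb}(g_Q)=\mathfrak{m}(Q)=0$ since $Q=(z^q-1)/(z-1)$ is a product of cyclotomic polynomials. Fix $\epsilon>0$; applying Theorem~\ref{thmGouezel} to $S_q$ with $g_P$, and then with $g_Q$, yields nonnegative constants $A_P,A_Q$ and, for Lebesgue-almost every $\omega$, an integer $n_0(\omega)$ such that for all $n\ge n_0(\omega)$,
\[
\Big|\log\abs{\lambda_v(n)}-nI\Big|=\Big|\sum_{k=0}^{n-1}g_P(S_q^k\omega)-nI\Big|\le(A_P+\epsilon)\sqrt{n\log\log n},\qquad \Big|\log\abs{\lambda_{\vec1}(n)}\Big|\le(A_Q+\epsilon)\sqrt{n\log\log n}.
\]

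\emph{Conclusion and main obstacle.} Exponentiating, inserting these bounds into the norm comparison, and using $I=\mathfrak{m}(P)\ge0$ (Proposition~\ref{Positivity}), one gets for almost every $\omega$ and all large $n$
\[
\frac{\norm{\mathscr{C}_\zeta(\omega\vec1,n)}}{e^{nI}}\le\kappa\max\big(e^{(A_Q+\epsilon)\sqrt{n\log\log n}-nI},e^{(A_P+\epsilon)\sqrt{n\log\log n}}\big)\le\kappa\,e^{(\max(A_P,A_Q)+\epsilon)\sqrt{n\log\log n}},
\]
and likewise $\norm{\mathscr{C}_\zeta(\omega\vec1,n)}/e^{nI}\ge\kappa^{-1}e^{-(A_P+\epsilon)\sqrt{n\log\log n}}$. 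As $\sqrt{n\log\log n}\to\infty$ the factors $\kappa^{\pm1}$ are absorbed by enlarging $n_0(\omega)$, and $B=\max(A_P,A_Q)+2\epsilon$ works. The only genuinely technical point is the second step: verifying, for an arbitrary integer polynomial $P$ (and for $Q$), that $\log\abs{P(e(\cdot))}$ lies in $\mathrm{Mon}^c_2(M,\text{Leb})$ — that is, the square-integrability of the logarithmic singularities at the unit-circle zeros of $P$ together with the finite decomposition into monotone pieces — which generalizes the decomposition $\log(2\sin\pi x)=\tfrac12 f_1+\tfrac12 f_2$ used in Proposition~\ref{productbound}; everything else is $2\times2$ linear algebra and bookkeeping.
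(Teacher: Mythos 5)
Your proof is correct and takes essentially the same route as the paper: reduce to the scalar product $\prod_{k<n}P(e(q^k\omega))$ via Lemma~\ref{product}, verify $\log\abs{P(e(\cdot))}\in\mathrm{Mon}^c_2(M,\mathrm{Leb})$ with mean $\mathfrak{m}(P)=I$, and feed this into the bounded law of the iterated logarithm of Theorem~\ref{thmGouezel}. Where you differ — and in fact tighten — the paper's argument is the linear-algebra step. The paper passes from a bound on $\norm{\mathscr{C}_\zeta(\omega\vec1,n)v}$ directly to the same bound on the matrix norm $\norm{\mathscr{C}_\zeta(\omega\vec1,n)}$ with no justification; that jump is immediate for the lower bound (since $\norm{A}\ge\norm{Av}/\norm{v}$), but the upper bound genuinely requires controlling the other eigendirection. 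You close this gap by observing that $\vec1$ is the second right eigenvector, with scalar $\prod_k Q(e(q^k\omega))$ where $Q(z)=1+z+\cdots+z^{q-1}$; since $Q$ is a product of cyclotomic factors, $\mathfrak{m}(Q)=0$, and a second application of Theorem~\ref{thmGouezel} to $g_Q=\log\abs{Q(e(\cdot))}$ yields the matching sub-exponential bound for this direction. Conjugating by $S=(\vec1\mid v)$ then gives the two-sided comparison of $\norm{\mathscr{C}_\zeta(\omega\vec1,n)}$ with $\max(\abs{\lambda_{\vec1}(n)},\abs{\lambda_v(n)})$ up to the fixed constant $\kappa=\norm{S}\,\norm{S^{-1}}$, which is absorbed since $\sqrt{n\log\log n}\to\infty$. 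Net effect: your write-up supplies the upper-bound direction that the paper's proof elides, at the modest cost of one extra invocation of the bounded LIL; the verification that $\log\abs{P(e(\cdot))}$ (and $\log\abs{Q(e(\cdot))}$) lies in $\mathrm{Mon}^c_2$ is handled identically in both.
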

\begin{proof} 
	Let $P$ the integer polynomial appearing in Lemma \ref{product} (which can be deduced directly from $\zeta$ as explained in \cite{BaakeBinary}) and $v = (1,-1)^T$. The real function $f:\omega \mapsto \log(\abs{P(e(\omega))})$ is defined everywhere in $[0,1]$ except maybe at $q$ points, since the trigonometric polynomial $P(e(\omega))$ has degree $q-1$.\\
	
	The reader may check that $f\in L^1([0,1], \text{Leb})$, $\text{Leb}(f) = \mathfrak{m}(P) = I$ (by Theorem \ref{mainBaake}) and $f\in L^2([0,1],\text{Leb})$, i.e., there exists $M>0$ such that $\text{Leb}(f^2)<M^2$. We may decompose $f$ into a finite sum of increasing and decreasing functions (as in the case of the Thue-Morse substitution in last section) taking into account the zeros and critical points of $P(e(\omega))$. In summary, $f\in\text{Mon}_2(M,\text{Leb})$ and by Theorem \ref{thmGouezel}, there exists $B>0$ such that for Lesbesgue almost every $\omega\in[0,1)$ there exists $n_0(\omega)\geq1$ such that for every $n\geq n_0(\omega)$,
	\[ 
	\left| \log\norm{\mathscr{C}_{\zeta}(\omega\vec{1},n)} - n\mathfrak{m}(P) \right| \leq B\sqrt{n\log\log(n)},
	\]
	i.e.,
	\[
	e^{-B\sqrt{n\log\log(n)}} \leq \dfrac{\norm{\mathscr{C}_{\zeta}(\omega\vec{1},n)}}{e^{nI}} \leq  e^{B\sqrt{n\log\log(n)}}.
	\]
	where $I = \chi^+_{\zeta} =\mathfrak{m}(P)$.
\end{proof}


\subsection{Upper bounds for twisted correlations: the Thue-Morse case}\label{twistsec}
In this subsection we look for upper bounds for the twisted correlations $C^{\pm}_{2^n}(a_1,\dots,a_t,\omega)$ for the Thue-Morse subshift. A remark on notation: given $F:\N \longrightarrow \C$, $G:\N \longrightarrow \R_+$ and some parameters $a,b>0$, we write
\[
F = O_{a,b}(G)
\]
if there exists $C = C(a,b) > 0$ depending only on $a,b$ such that for all $n\geq0$ we have
\[
\abs{F(n)} \leq CG(n).
\]
The main result of this section is the next one.
\begin{theorem}\label{TwistCorr}
	Let $C^{\pm}_{2^n}(a_1,\dots,a_t,\omega,\mathbf{u})$ be the twisted correlation defined in Definition \ref{defTwsited}, for $F = \mathds{1}_{[0]}-\mathds{1}_{[1]}$ and $\mathbf{u}$ the Thue-Morse sequence. Then
	\begin{itemize}
		\item if $t$ is even, there exists $B>0$ independent of $\omega$ and $t$ such that for almost every $\omega$,
		\[
		C^{\pm}_{2^n}(a_1,\dots,a_t,\omega,\mathbf{u}) = O_{a_t,\omega}\left(n^te^{ B\sqrt{n\log\log(n)}}\right).
		\]
		\item if $t$ is odd, for every $\epsilon>0$ and almost every $\omega$,
		\[
		C^{\pm}_{2^n}(a_1,\dots,a_t,\omega,\mathbf{u}) = O_{a_t,\omega}\left( n^{t+1+\epsilon}\right).
		\]
	\end{itemize}
\end{theorem}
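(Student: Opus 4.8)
The plan is to express the twisted correlation $C^{\pm}_{2^n}(a_1,\dots,a_t,\omega,\mathbf{u})$ in terms of the arithmetic description of the Thue-Morse sequence, $\mathbf{u}_k = (-1)^{s_2(k)}$, so that the summand becomes $(-1)^{s_2(k)+s_2(k+a_1)+\dots+s_2(k+a_t)}e(k\omega)$. The exponent of $-1$ depends only on the differences $s_2(k+a_i)-s_2(k)$; fixing an integer $n$ and writing $k$ in base $2$ with (at least) $n$ digits, the combined carry pattern of adding $a_1,\dots,a_t$ simultaneously is governed, digit-by-digit, by a finite automaton. Concretely, I would invoke the solution-set description of subsection 2.5: for each choice of residues $d_1,\dots,d_t$ (with $|d_i|$ bounded in terms of $a_i$), the set of $k$ with $s_2(k+a_i)-s_2(k)=d_i$ for all $i$ is, after intersecting the finitely many $\mathcal{S}_{a_i,d_i}$, a finite union of cylinders $[\mathfrak{p}]$ with $\mathfrak{p}$ of length $O(\log_2 a_t)$ (Lemma \ref{TreeBound}). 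Splitting the sum $\sum_{k=0}^{2^n-1}$ according to which cylinder $\underline{k}_2$ starts with (the low-order digits), each piece becomes $(-1)^{d_1+\dots+d_t}\sum e(k\omega)$ over $k$ in a cylinder, i.e. an arithmetic progression with common difference $2^{|\mathfrak{p}|}$ — a geometric sum that telescopes into a twisted Birkhoff-type sum over the doubling map. This reduces everything to controlling sums of the form $S_{2^{n-|\mathfrak{p}|}}^{f}(2^{|\mathfrak{p}|}\omega, \cdot)$ where, after the reduction, $f$ is essentially $\mathbf{1}_{[b]}-\mathbf{1}_{[b']}$ or a constant, and $B$-uniform control over $\omega$ comes from the Riesz-product factor $p_n(\omega)$ bounded via Proposition \ref{productbound}.

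The key steps, in order, are: (1) rewrite $C^{\pm}$ via $s_2$; (2) partition $[0,2^n)$ by the value of the carry vector $(s_2(k+a_i)-s_2(k))_{i=1}^t$, using Lemma \ref{EPwords}/\ref{TreeBound} to get $O_{a_t}(1)$ cylinders each of bounded length and a sign $(-1)^{\sum d_i}$ depending only on the cylinder; (3) for each cylinder, recognize the inner sum as a geometric progression in $k$, and express the whole contribution as a product over scales — this is where the parity of $t$ enters: when $t$ is even the sign contributions across complementary cylinders reinforce and one gets a genuine twisted Birkhoff sum whose modulus is controlled by the Riesz product $p_n$, giving the $e^{B\sqrt{n\log\log n}}$ bound with $O(n^t)$ coming from the number of cylinder-choices growing polynomially in $n$ (since the admissible $d_i$ range over $O(n)$ values each but with the effective support cut down); when $t$ is odd there is a cancellation forcing the twisted-Birkhoff piece to be replaced by something with mean zero, so the bound degrades only to polynomial $n^{t+1+\epsilon}$ via a cruder $L^2$/Borel–Cantelli estimate rather than the law of the iterated logarithm; (4) sum the $O_{a_t}(n^t)$ contributions and absorb constants.

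The main obstacle I anticipate is step (3): making precise \emph{why} the even/odd parity of $t$ produces the dichotomy between sub-exponential and polynomial growth. The cleanest route is probably to track, for the reduced one-dimensional sum over each cylinder $[\mathfrak{p}]$, a "local" trigonometric factor and show its product over the $n$ scales is exactly $\pm p_m(\omega)^{\pm 1}$ (up to bounded error and the $n^t$ combinatorial factor) when $t$ is even, whereas when $t$ is odd the relevant factor is instead a sum like $1+e(\cdot)+\dots$ of cyclotomic type whose logarithmic Mahler measure is zero but which can vanish, forcing one away from the iterated-logarithm estimate and into an $\epsilon$-loss polynomial bound. A secondary technical nuisance is that the summation index $k$ runs up to $2^n-1$ while the admissible residues $d_i$ depend on $n$ (they range over roughly $[-n,\ceil{\log_2 a_i}]$), so the number of cylinder-types is not literally $O_{a_t}(1)$ but $O_{a_t}(n^t)$ — this is exactly the source of the $n^t$ (resp. $n^{t+1+\epsilon}$) factor, and one must check the geometric/Borel–Cantelli bounds are uniform enough across all of them. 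I would handle the $\epsilon$ in the odd case by a standard Borel–Cantelli argument: the $L^2$ norm of the relevant sum over a dyadic block is $O(n^{t+1})$, Chebyshev plus summing over $n$ along a subsequence and interpolating gives $O(n^{t+1+\epsilon})$ almost surely.
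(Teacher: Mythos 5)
Your outline identifies the right machinery — the cylinder decomposition from Lemma~\ref{EPwords}/\ref{TreeBound}, the $O_{a_t}(n^t)$ cylinder count, and the split between a Riesz-product regime and a geometric-sum regime — so the skeleton matches the paper. But the mechanism behind the even/odd dichotomy is simpler and more concrete than ``sign contributions across complementary cylinders reinforce.'' Writing $d_j(k)=s_2(k+a_j)-s_2(k)$, the exponent in $C^{\pm}_{2^n}$ is $(t+1)s_2(k)+\sum_j d_j(k)$: when $t$ is odd the factor $(-1)^{(t+1)s_2(k)}$ is identically $1$, and after adding the $O_\omega(1)$ correction $\sum_k e(k\omega)$ the selected-cylinder inner sums are plain geometric progressions; when $t$ is even the residual factor $(-1)^{s_2(k)}$ survives, and after subtracting the twisted Birkhoff sum $\sum_k(-1)^{s_2(k)}e(k\omega)$ (itself controlled by Theorem~\ref{deviation}) the inner sums over each cylinder become rescaled Riesz products. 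You do flag that the inner density is ``essentially $\mathds{1}_{[b]}-\mathds{1}_{[b']}$ or a constant,'' but you never tie this to the parity of $t$; that link is the whole content of the dichotomy and without it the computation does not close.

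More seriously, your proposed route to the $\epsilon$-loss polynomial bound in the odd case — an $L^2$/Chebyshev/Borel--Cantelli interpolation with ``$L^2$ norm over a dyadic block $O(n^{t+1})$'' — does not work as stated. By Parseval, $\norm{\widetilde{C}_{2^n}}_{L^2(d\omega)}$ is of order $2^{n/2}$ (the Fourier coefficients of $\widetilde{C}_{2^n}$ equal $0$ or $2$), so the raw $L^2$ norm is exponentially large; and after the cylinder decomposition the individual terms have the form $\norm{2^{m}\omega}_{\R/\Z}^{-1}$, which are not even integrable. The paper instead bounds each geometric sum pointwise by $\pi\,\norm{2^{m(d)}\omega}_{\R/\Z}^{-1}$ and invokes Lemma~\ref{LemmaLowerBound} (for a.e.\ $\omega$, $\norm{2^m\omega}_{\R/\Z}\geq m^{-(1+\epsilon)}$ for all large $m$), giving $O(n^{1+\epsilon})$ per cylinder and hence $O(n^{t+1+\epsilon})$ over the $O(n^t)$ cylinders. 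Your $L^2$ step needs to be replaced by this pointwise denominator bound or some substitute for it.
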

For simplicity we will only prove the case $t=1$ here. For the general case, see the Appendix.\\

Equivalently, we will prove the upper bound for $\widetilde{C}_{2^n}(a,\omega,\mathbf{u}) := \sum_{k=0}^{2^n-1} [1+(-1)^{s_2(k+a) + s_2(k)}]e(k\omega)$, since the geometric sum is bounded by a constant depending on $\omega$, so it can be included in the $O_{a_t,\omega}$ sign. Let us recall that if $d>\ceil{\log_2(a)}$, then $\mathcal{S}_{a,d}=\emptyset$; and $d<-n$ implies $\mathcal{S}_{a,d}\cap\{0,\dots,2^n-1\}=\emptyset$. Recall also that for a word $w=w_0\dots w_{n-1} \in  \{0,1\}^*$, $\overline{w}^2$ is the integer $w_0 + \dots +w_{n-1}2^{n-1}$. Finally, note that we can change $s_2(k+a) + s_2(k)$ for $s_2(k+a) - s_2(k)$ because it is in the exponent of $-1$.

\begin{align*}
\widetilde{C}_{2^n}(a,\omega,\mathbf{u}) &= \sum_{k=0}^{2^n-1} [1+(-1)^{s_2(k+a) - s_2(k)}]e(k\omega)\\
&=2\sum_{d=-\ceil{\log_2(a)/2}}^{\floor{n/2}} \sum_{\substack{ k\in\mathcal{S}_{a,-2d} \\ 0\leq k\leq 2^n-1}} e(k\omega) \\
&= 2\sum_{d=-\ceil{\log_2(a)/2}}^{\floor{n/2}} \:\: \sum_{\substack{ \underline{k}_2 \in \bigcup_i \: [\mathfrak{p}_{a,-2d}(i)] \\ 0\leq k\leq 2^n-1}} e(k\omega) \\
&= 2\sum_{d=-\ceil{\log_2(a)/2}}^{\floor{n/2}} \sum_{i=1}^{\#\mathcal{P}_{a,-2d}} \:\: \sum_{k=0}^{2^{n-\norm{\mathcal{P}_{a,-2d}}}-1} e\left(\left(2^{\norm{\mathcal{P}_{a,-2d}}}k + \overline{\mathfrak{p}_{a,-2d}(i)}^2\right)\omega\right) \\
&= 2\sum_{d=-\ceil{\log_2(a)/2}}^{\floor{n/2}} \sum_{i=1}^{\#\mathcal{P}_{a,-2d}} e\left( \overline{\mathfrak{p}_{a,-2d}(i)}^2 \omega\right) \sum_{k=0}^{2^{n-\norm{\mathcal{P}_{a,-2d}}}-1} e\left(2^{\norm{\mathcal{P}_{a,-2d}}}k \omega\right) \\
&= 2(e(2^n\omega)-1) \sum_{d=-\ceil{\log_2(a)/2}}^{\floor{n/2}} \sum_{i=1}^{\#\mathcal{P}_{a,-2d}}   \dfrac{e\left( \overline{\mathfrak{p}_{a,-2d}(i)}^2 \omega\right)}{e\left(2^{\norm{\mathcal{P}_{a,-2d}}}\omega \right)-1} 
\end{align*}
By the first claim of Lemma \ref{TreeBound},
\begin{align*}
\abs{\widetilde{C}_{2^n}(a,\omega,\mathbf{u})} &\leq 8a^3 \sum_{d=-\ceil{\log_2(a)/2}}^{\floor{n/2}} \dfrac{1}{\abs{\sin\left(\pi 2^{\norm{\mathcal{P}_{a,-2d}}+1}\omega \right)}} \\
&\leq 4\pi a^3 \sum_{d=-\ceil{\log_2(a)/2}}^{\floor{n/2}} \norm{ 2^{\norm{\mathcal{P}_{a,-2d}}+1}\omega }^{-1}_{\R/\Z}
\end{align*}
By the second claim of the same lemma,
\begin{align*}
\abs{\widetilde{C}_{2^n}(a,\omega,\mathbf{u})} &\leq 4\pi a^3 \sum_{d=-\ceil{\log_2(a)/2}}^{\floor{n/2}} \norm{ 2^{2\ceil{\log_2(a)} + 2d+1}\omega }^{-1}_{\R/\Z}.
\end{align*}
Now we recall a well-known consequence of Borel-Cantelli lemma.
\begin{lemma}[see \cite{baake2017averaging}, Lemma 6.2.6]
	\label{LemmaLowerBound}
	Let $\epsilon > 0$ and $q>1$. For almost every $x\in \R$ there exists $d_0(x)\in \N$ such that
	\[ 
	\norm{q^nx}_{\R/\Z} \geq \dfrac{1}{n^{1+\epsilon}},
	\] 
	holds for every $n\geq d_0(x)$.
\end{lemma}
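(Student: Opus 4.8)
The plan is the standard Borel--Cantelli argument underlying Khinchin-type statements. Fix $\epsilon > 0$ and $q > 1$. For $n \in \N$ I would introduce the bad set
\[
E_n = \left\{ x \in [0,1) : \norm{q^n x}_{\R/\Z} < n^{-(1+\epsilon)} \right\},
\]
and the first step is to bound its Lebesgue measure. For $n$ large enough that $n^{-(1+\epsilon)} < 1/2$, the change of variables $y = q^n x$ gives $\text{Leb}(E_n) = q^{-n}\,\text{Leb}\{ y \in [0,q^n) : \norm{y}_{\R/\Z} < n^{-(1+\epsilon)} \}$; the set on the right meets at most $q^n + 1$ unit intervals and contributes measure $2n^{-(1+\epsilon)}$ in each, so $\text{Leb}(E_n) \leq q^{-n}(q^n+1)\cdot 2 n^{-(1+\epsilon)} = (1+q^{-n})\cdot 2n^{-(1+\epsilon)} \leq 4 n^{-(1+\epsilon)}$. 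When $q \in \N$ (the only case actually used, with $q = 2$) this is even cleaner: $x \mapsto q^n x \bmod 1$ preserves Lebesgue measure on $[0,1)$, so $\text{Leb}(E_n) = 2 n^{-(1+\epsilon)}$ exactly.

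Since $1 + \epsilon > 1$, the series $\sum_{n \geq 1} \text{Leb}(E_n)$ converges. The Borel--Cantelli lemma then yields $\text{Leb}(\limsup_n E_n) = 0$, i.e. for almost every $x \in [0,1)$ there is $d_0(x) \in \N$ with $x \notin E_n$, that is $\norm{q^n x}_{\R/\Z} \geq n^{-(1+\epsilon)}$, for all $n \geq d_0(x)$. To pass from $[0,1)$ to all of $\R$, I would decompose $\R = \bigcup_{m\in\Z} [m,m+1)$ and run the same estimate on each $[m,m+1)$ (the measure bound changes by at most a bounded factor, or, when $q \in \N$, one simply uses that $\norm{q^n x}_{\R/\Z}$ depends on $x$ only through $x \bmod 1$); the global exceptional set is then a countable union of null sets, hence null.

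There is no real obstacle here: the entire content is the measure estimate $\text{Leb}(E_n) = O(n^{-(1+\epsilon)})$ together with summability, and the only points needing a word of care are the non-integer $q$ case of that estimate (handled by the change of variables above) and the reduction from the unit interval to $\R$ (handled by the countable decomposition). This is why the statement is quoted from \cite{baake2017aperiodic} rather than reproved in detail.
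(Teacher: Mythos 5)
The paper does not actually prove this lemma — it is stated as a citation to \cite{baake2017aperiodic}, Lemma~6.2.6, and used without further argument — so there is no in-paper proof to compare against. Your reconstruction is the standard Borel--Cantelli argument and it is correct: the measure estimate $\mathrm{Leb}(E_n)=O(n^{-(1+\epsilon)})$ (exact for integer $q$ via measure-preservation of the $q$-adic map, and within a factor of $4$ for general $q>1$ via the change of variables), the summability coming from $1+\epsilon>1$, and the reduction from $\R$ to $[0,1)$ by a countable decomposition into null sets are all handled properly. This is precisely the argument one would expect behind the cited lemma, and nothing is missing.
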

Then, 
\begin{align*}
\abs{\widetilde{C}_{2^n}(a,\omega,\mathbf{u})} &= O_{a,\omega}\left( \sum^{\floor{n/2}}_{d=d_0(\omega)} (2\ceil{\log_2(a)} + 2d+1)^{1+\epsilon} \right)\\
&= O_{a,\omega}(n^{2+\epsilon})
\end{align*}

This finishes the proof for the case $t=1$. We have already seen the case $t=0$, which is the case of twisted Birkhoff sums in last section.\\

To finish this section, we will pass to uniform bounds for the twisted correlations for all points of the subshift. We relay on the classic prefix-suffix decomposition:
\begin{prop}[\cite{DumontThomas}, Lemma 1.6]\label{prefsuf}
	Let $\zeta$ be a primitive substitution with Perron-Frobenius eigenvalue $\theta$. For any $\mathbf{x}\in X_\zeta$, and $N\geq 1$ there exists $n\geq 1$ such that 
	\[
	\mathbf{x}_{[0,N-1]} = s_0\zeta(s_1)\ldots\zeta^{n}(s_n)\zeta^{n}(p_n)\ldots\zeta(p_1)p_0,
	\]
	where $p_i,s_i$ are respectively proper prefixes and suffixes (possibly empty) of words of $\{\zeta(a)|a\in\A \}$ and $p_n,s_n$ not both equal to the empty word. Moreover,
	there exists $C>0$ such that $|\log(N)-n\log(\theta)| \leq C$.
\end{prop}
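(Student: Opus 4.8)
This is the classical Dumont--Thomas prefix--suffix decomposition, so I only sketch the mechanism. The plan is to (i) set up the desubstitution hierarchy of $\mathbf x$, (ii) peel the window $\mathbf x_{[0,N-1]}$ one level at a time by induction, and (iii) read the length estimate off the Perron--Frobenius theorem. For (i), extend $\zeta$ to $\A^{\Z}$ with the convention that coordinate $0$ of $\zeta(\mathbf y)$ is the first letter of $\zeta(\mathbf y_0)$. A primitive substitution satisfies the covering identity $X_\zeta=\bigcup_{a\in\A}\bigcup_{j=0}^{\abs{\zeta(a)}-1}T^{j}\zeta([a])$, where $[a]=\{\mathbf y\in X_\zeta:\mathbf y_0=a\}$: the inclusion $\supseteq$ holds because $\zeta(X_\zeta)\subseteq X_\zeta$ and $X_\zeta$ is shift-invariant, and $\subseteq$ because every factor $\mathbf x_{[-m,m]}$ lies inside the $\zeta$-image of a factor of $X_\zeta$, so a compactness argument (after normalizing the offset) produces a desubstitution. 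Hence every $\mathbf x\in X_\zeta$ equals $T^{j_1}\zeta(\mathbf x^{(1)})$ for some $\mathbf x^{(1)}\in X_\zeta$ and $0\le j_1<\abs{\zeta(\mathbf x^{(1)}_0)}$; iterating, and using $\zeta\circ T^{k}=T^{\abs{\zeta(\mathbf y_0\cdots\mathbf y_{k-1})}}\circ\zeta$, gives for every $k\ge 1$ a sequence $\mathbf x^{(k)}\in X_\zeta$ and an integer $0\le r_k<\abs{\zeta^{k}(\mathbf x^{(k)}_0)}$ with $\mathbf x=T^{r_k}\zeta^{k}(\mathbf x^{(k)})$; that is, $\mathbf x$ is tiled by the level-$k$ blocks $\zeta^{k}(\mathbf x^{(k)}_i)$, $i\in\Z$, with coordinate $0$ sitting at internal position $r_k$ of the block $\zeta^{k}(\mathbf x^{(k)}_0)$.

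For (ii), I would prove by induction on $n$ the refined statement that there exist proper suffixes $s_0,\dots,s_{n-1}$ and proper prefixes $p_0,\dots,p_{n-1}$ of words of $\zeta(\A)$, and a factor $w^{(n)}$ of $\mathbf x^{(n)}$, with
\[
\mathbf x_{[0,N-1]}=s_0\zeta(s_1)\cdots\zeta^{n-1}(s_{n-1})\cdot\zeta^{n}(w^{(n)})\cdot\zeta^{n-1}(p_{n-1})\cdots\zeta(p_1)\,p_0 .
\]
The case $n=0$ is trivial, with $w^{(0)}=\mathbf x_{[0,N-1]}$ and empty products. For the step $n\to n+1$, desubstitute $\mathbf x^{(n)}=T^{j}\zeta(\mathbf x^{(n+1)})$ and regard the window $w^{(n)}$ inside $\zeta(\mathbf x^{(n+1)})$: cutting at the two block boundaries of $\zeta(\mathbf x^{(n+1)})$ nearest its endpoints writes $w^{(n)}=s_n\cdot\zeta(w^{(n+1)})\cdot p_n$ with $s_n$ a proper suffix and $p_n$ a proper prefix of words of $\zeta(\A)$; applying $\zeta^{n}$ and substituting into the level-$n$ identity yields the level-$(n+1)$ one. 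Since $\zeta^{n}(w^{(n)})$ is a subword of $\mathbf x_{[0,N-1]}$, we have $\abs{w^{(n)}}\le N/\min_a\abs{\zeta^{n}(a)}\to 0$ as $n\to\infty$; stopping at the least $n$ for which $\abs{w^{(n)}}\le 1$ and then performing one further elementary cut to rewrite the central term $\zeta^{n}(w^{(n)})$ in the form $\zeta^{n}(s_n)\,\zeta^{n}(p_n)$ with $s_n,p_n$ not both empty (they cannot be, unless $N=0$) produces exactly the stated normal form.

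For (iii), primitivity of $M_\zeta$ gives, via the Perron--Frobenius theorem applied to $M_\zeta^{k}\vec{1}$, constants $0<c_1\le c_2$ depending only on $\zeta$ with $c_1\theta^{k}\le\min_a\abs{\zeta^{k}(a)}\le\max_a\abs{\zeta^{k}(a)}\le c_2\theta^{k}$ for all $k$. Since $\abs{s_i},\abs{p_i}<q_{\max}:=\max_a\abs{\zeta(a)}$, we get $\abs{\zeta^{i}(s_i)}+\abs{\zeta^{i}(p_i)}\le 2q_{\max}c_2\theta^{i}$, whence $N\le 2q_{\max}c_2\,\theta^{n+1}/(\theta-1)=:C'\theta^{n}$; conversely, at level $n-1$ the residual window has length $\ge 2$, so $N\ge\abs{\zeta^{n-1}(w^{(n-1)})}\ge 2c_1\theta^{n-1}$. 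Taking logarithms, $\abs{\log N-n\log\theta}\le\max\bigl(\log C',\,\abs{\log(2c_1/\theta)}\bigr)=:C$, a constant depending only on $\zeta$.

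The genuinely fiddly part is (ii): aligning the peeling exactly with the normal form, keeping every $s_i$ and $p_i$ an honest (proper) suffix or prefix of a word of $\zeta(\A)$ at each level, and handling small $N$ (where the bound $n\ge 1$ requires a direct check) together with the degenerate cases in which several of the layers $s_i,p_i$, or the central window, are empty. The covering identity of (i) and the Perron--Frobenius growth bounds of (iii) are standard.
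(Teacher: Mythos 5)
The paper does not prove this proposition; it cites it from Dumont--Thomas, so there is no ``paper's proof'' to compare against. Your three-step plan (desubstitution hierarchy, inductive peeling, Perron--Frobenius length estimate) is the standard route to this result, and parts (i) and (iii) are essentially correct as written (the covering identity in (i) really rests on recognizability/Moss\'e's theorem for primitive aperiodic substitutions, but the idea is right, and the length bookkeeping in (iii) is sound).

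The place where I would push back is the heart of (ii), which you flag as ``fiddly'' but which I think hides a genuine gap rather than just bookkeeping. The inductive cut $w^{(n)}=s_n\,\zeta(w^{(n+1)})\,p_n$ is only available when the residual window $w^{(n)}$, viewed inside the tiling $\zeta(\mathbf x^{(n+1)})$, actually straddles at least one level-$1$ block boundary. Your stopping rule ``least $n$ with $\abs{w^{(n)}}\le 1$'' does not guarantee this: whenever $q_{\max}=\max_a\abs{\zeta(a)}\ge 3$, a residual of length $2$ or more can still sit as a \emph{proper internal} factor of a single $\zeta(a)$ (neither a prefix nor a suffix of it), and then the cut as you describe it is undefined. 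The same issue infects the final step, which you call an ``elementary cut'': if $w^{(n)}$ is a single letter $c$ lying in the interior of some $\zeta(a)$, writing $\zeta^{n}(c)=\zeta^{n}(s_n)\zeta^{n}(p_n)$ with $s_n p_n=c$ forces one of $s_n$, $p_n$ to be the letter $c$ itself, and hence forces $c$ to be the first or last letter of some $\zeta(a')$ with $\abs{\zeta(a')}\ge 2$; that is not automatic from primitivity and needs an argument (or a different choice of the level $n$). In the classical treatment one sidesteps this by choosing $n$ from the outside in --- taking the least $n$ for which $\mathbf x_{[0,N-1]}$ fits inside a single level-$(n+1)$ block, so that it meets exactly one level-$n$ boundary and splits cleanly into (suffix of a level-$n$ block)$\cdot$(prefix of the next), after which each half is handled by the one-sided prefix/suffix Dumont--Thomas recursion --- rather than peeling from the inside out with a length threshold. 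So the overall architecture of your argument is right, but the stopping rule and the ``one further elementary cut'' need to be replaced by the boundary-aligned choice of $n$; as stated they do not yield the claimed normal form in all cases.
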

\begin{cor}\label{TwistedCorrUnif}
	Let $C^{\pm}_{N}(a_1,\dots,a_t,\omega,\mathbf{x})$ be the twisted correlation over $\mathbf{x}\in X_{\zeta_{TM}}$. Then
	\begin{itemize}
		\item if $t$ is even, there exists $B>0$ independent of $\omega$ and $t$ such that for almost every $\omega$,
		\[
		C^{\pm}_{N}(a_1,\dots,a_t,\omega,\mathbf{x}) = O_{a_t,\omega}\left(\log(N)^{t+1}e^{ B\sqrt{\log(N)\log\log\log(N)}} \right).
		\]
		\item if $t$ is odd, for every $\epsilon>0$ and almost every $\omega$,
		\[
		C^{\pm}_{N}(a_1,\dots,a_t,\omega,\mathbf{x}) = O_{a_t,\omega}\left(\log(N)^{t+2+\epsilon}\right).
		\]
	\end{itemize}
\end{cor}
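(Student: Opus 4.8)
The plan is to deduce the uniform bounds from the fixed-point bounds of Theorem~\ref{TwistCorr} by means of the Dumont--Thomas prefix-suffix decomposition (Proposition~\ref{prefsuf}). Fix $\mathbf{x}\in X_{\zeta_{TM}}$ and $N\ge 1$, and write
\[
\mathbf{x}_{[0,N-1]} = s_0\zeta_{TM}(s_1)\cdots\zeta_{TM}^{n}(s_n)\,\zeta_{TM}^{n}(p_n)\cdots\zeta_{TM}(p_1)p_0,
\]
where, since the Perron--Frobenius eigenvalue is $2$, one has $n = \log_2(N) + O(1)$. For the Thue--Morse substitution every proper prefix or suffix of $\zeta_{TM}(0)=01$ and $\zeta_{TM}(1)=10$ is empty or a single letter, so each nonempty block above is of the form $\zeta_{TM}^{i}(c)$ with $c\in\{0,1\}$ and $0\le i\le n$; it has length exactly $2^{i}$, and since $\zeta_{TM}(0)$ begins with $0$ and $\zeta_{TM}(1)$ begins with $1$, the word $\zeta_{TM}^{i}(c)$ is a prefix of the fixed point $\mathbf{u}$ when $c=0$ and of $\mathbf{v}$ when $c=1$. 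Hence $\{0,\dots,N-1\}$ is partitioned into $O(\log N)$ intervals $[\beta_j,\beta_{j+1})$, with $\beta_{j+1}-\beta_j = 2^{i_j}$, such that the first $2^{i_j}$ symbols of $T^{\beta_j}\mathbf{x}$ coincide with those of a fixed point $\mathbf{w}_j\in\{\mathbf{u},\mathbf{v}\}$.

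Next I would split the sum defining $C^{\pm}_N(a_1,\dots,a_t,\omega,\mathbf{x})$ along this partition. On the block $[\beta_j,\beta_{j+1})$, after the change of index $k'=k-\beta_j$ and factoring out the unimodular phase $e(\beta_j\omega)$, the summand $F(T^{k}\mathbf{x})F(T^{k+a_1}\mathbf{x})\cdots F(T^{k+a_t}\mathbf{x})$ equals $F(T^{k'}\mathbf{w}_j)F(T^{k'+a_1}\mathbf{w}_j)\cdots F(T^{k'+a_t}\mathbf{w}_j)$ whenever $k'+a_t\le 2^{i_j}-1$, that is, for all but the top $a_t$ indices $k'\in\{0,\dots,2^{i_j}-1\}$. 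Since $|F|\le 1$, this gives
\[
\Big|\sum_{k=\beta_j}^{\beta_{j+1}-1} F(T^{k}\mathbf{x})\cdots F(T^{k+a_t}\mathbf{x})\,e(k\omega) \;-\; e(\beta_j\omega)\,C^{\pm}_{2^{i_j}}(a_1,\dots,a_t,\omega,\mathbf{w}_j)\Big| \le 2a_t.
\]
Summing over the $O(\log N)$ blocks and using the triangle inequality together with the elementary identity $|C^{\pm}_{m}(a_1,\dots,a_t,\omega,\mathbf{v})| = |C^{\pm}_{m}(a_1,\dots,a_t,\omega,\mathbf{u})|$ (noted after Definition~\ref{defTwsited}), we obtain
\[
\big|C^{\pm}_N(a_1,\dots,a_t,\omega,\mathbf{x})\big| \;\le\; \sum_{j} \big|C^{\pm}_{2^{i_j}}(a_1,\dots,a_t,\omega,\mathbf{u})\big| \;+\; O_{a_t}(\log N).
\]

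Finally I would insert Theorem~\ref{TwistCorr}. The $O_\omega(1)$ blocks with $i_j$ below the almost-everywhere threshold $n_0(\omega)$ of that theorem each contribute at most $2^{i_j}\le 2^{n_0(\omega)} = O_\omega(1)$; for the remaining blocks, $|C^{\pm}_{2^{i_j}}(\dots,\mathbf{u})|$ is $O_{a_t,\omega}(i_j^{t}e^{B\sqrt{i_j\log\log i_j}}) \le O_{a_t,\omega}(n^{t}e^{B\sqrt{n\log\log n}})$ when $t$ is even, and $O_{a_t,\omega}(i_j^{t+1+\epsilon}) \le O_{a_t,\omega}(n^{t+1+\epsilon})$ when $t$ is odd. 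Summing the $O(\log N)$ blocks multiplies the bound by one extra factor of $\log N$, and substituting $n=\log_2(N)+O(1)$ (so that $\log\log n \asymp \log\log\log N$, hence $\sqrt{n\log\log n}\asymp\sqrt{\log N\,\log\log\log N}$) yields the claimed estimates, namely $O_{a_t,\omega}(\log(N)^{t+1}e^{B\sqrt{\log N\log\log\log N}})$ for $t$ even and $O_{a_t,\omega}(\log(N)^{t+2+\epsilon})$ for $t$ odd. The whole argument is bookkeeping; the one point requiring a little care is the $a_t$-lookahead at the block boundaries, where $\mathbf{x}$ may leave the fixed point it is tracking — this is exactly what the error terms $2a_t$ absorb, and since there are only $O(\log N)$ boundaries they are dominated by the main term.
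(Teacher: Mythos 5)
Your proof follows the same route as the paper: apply the Dumont--Thomas prefix--suffix decomposition (Proposition~\ref{prefsuf}), reduce each block to a twisted correlation over one of the two fixed points, invoke Theorem~\ref{TwistCorr}, and sum the $O(\log N)$ blocks. The one place you go beyond the paper's write-up is the treatment of the $a_t$-lookahead at block boundaries: the paper states the triangle inequality $\abs{C^{\pm}_N(\cdot,\mathbf{x})} \le \sum_j \abs{C^{\pm}_{\abs{\zeta_{TM}^j(p_j)}}(\cdot,\mathbf{u}/\mathbf{v})} + \dots$ directly, whereas a block-restricted correlation involves values $F(T^{k+a_i}\mathbf{x})$ that can spill past the block's end, so the restricted sum is not literally a fixed-point correlation; your $O(a_t)$-per-block error term is the correct way to account for this, and since it is $O_{a_t}(\log N)$ overall it is harmless. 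You also correctly note that the $O_\omega(1)$ low scales $i_j < n_0(\omega)$ must be handled trivially before the almost-everywhere bound applies. The conclusion and constants match the paper; this is a valid and, if anything, slightly more careful rendering of the same argument.
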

\begin{proof} Following the decomposition from Proposition \ref{prefsuf}, the bound follows simply from triangle inequality:
	\begin{align*}
	&\abs{C^{\pm}_{N}(a_1,\dots,a_t,\omega,\mathbf{x})} \\
	&\leq \sum_{j=0}^n \delta_{p_j,0}\abs{C^{\pm}_{\abs{\zeta_{TM}^j(p_j)}}(a_1,\dots,a_t,\omega,\mathbf{u})} + \delta_{p_j,1}\abs{C^{\pm}_{\abs{\zeta_{TM}^j(p_j)}}(a_1,\dots,a_t,\omega,\mathbf{v})}\\
	&+ \sum_{j=0}^n \delta_{s_j,0}\abs{C^{\pm}_{\abs{\zeta_{TM}^j(s_j)}}(a_1,\dots,a_t,\omega,\mathbf{u})} + \delta_{s_j,1}\abs{C^{\pm}_{\abs{\zeta_{TM}^j(s_j)}}(a_1,\dots,a_t,\omega,\mathbf{v})}\\
	&= \begin{cases}
	\sum_{j=0}^n O_{a_t,\omega}\left( j^t e^{ B\sqrt{j\log\log(j)}} \right) \quad &\text{ if } t \text{ is even,}\\
	\sum_{j=0}^n O_{a_t,\omega}\left( j^{t+1+\epsilon} \right)  &\text{ if } t \text{ is odd.}
	\end{cases}\\
	&\leq \begin{cases}
	(n+1)O_{a_t,\omega}\left( n^t e^{ B\sqrt{n\log\log(n)}} \right)\quad   & \text{ if } t \text{ is even,}\\
	(n+1)O_{a_t,\omega}\left( n^{t+1+\epsilon} \right) &\text{ if } t \text{ is odd.}
	\end{cases}
	\end{align*}
	By using the second claim of Proposition \ref{prefsuf}, we finish the proof of the corollary.
\end{proof}

\section{Bijective substitutions on two letters}
In this section we generalize results from Section 3 to general bijective substitutions on two letters. For the rest of the section let $q$ be an integer greater or equal to two.

\subsection{Top Lyapunov exponent} First let us see an arithmetic property for a general bijective substitution on two letters.
\begin{definition}
	We say a function $h:\N \longrightarrow \Z$ is \textit{strongly q-additive} if for all $n\geq 0$ and $j=0,\dots,q-1$
	\[
	h(qn+j) = h(n) + h(j). 
	\]
\end{definition}
\begin{remark}
	Note that a strongly $q$-additive function is completely determined by the values $h(1), \dots , h(q-1)$ (and $h(0)$ is necessarily equal to $0$). For more properties on (strongly) $q$-additive functions, see for example \cite{drmota2010analysis}.
\end{remark}
\begin{example}
	The sum-of-digits function $s_2$ is a strongly $2$-additive function.
\end{example}
From now on consider consider $\zeta$ a bijective substitution of length $q$ on $\A=\{0,1\}$. We suppose $\zeta(0)$ start with $0$, since otherwise we can work with $\zeta^2$ which is a bijective substitution of length $q^2$ on $\A$ and necessarily satisfies the latter condition. It will have associated the same subshift (and so, the same subshift factors) and the fact that
\[
\dfrac{\chi_{\zeta}^+}{\log(q)} = \dfrac{\chi_{\zeta^k}^+}{\log(q^k)} \quad \text{for all } k\geq1,
\]
justify this assumption to prove Theorem \ref{main}. Consider $\mathbf{u} = (u_n)_{n\geq0} = \zeta^\infty(0)$.
\begin{lemma}
	For $\zeta$ as above, the function $F = \mathds{1}_{[0]} - \mathds{1}_{[1]}$ defined on $X_\zeta$ satisfy
	\[
	F(T^{qn+j}\mathbf{u}) = F(T^n\mathbf{u})F(T^j\mathbf{u})
	\]
	(i.e., $F$ is strongly q-multiplicative, see \cite{drmota2010analysis}).
\end{lemma}
\begin{proof}
	Since $\mathbf{u} = \zeta(\mathbf{u})$ we have that for all $n\geq0$ and $0\leq j < q$, $u_{qn+j}$ = $\zeta(u_n)_j$. Let $H(n) = F(T^n\mathbf{u})$, then
	\begin{align*}
		H(qn+j) = \begin{cases} \: 1 &\text{if } u_{qn+j} = 0\\ \: -1 &\text{if } u_{qn+j} = 1 \end{cases}
		= \begin{cases} \: 1 &\text{if } \zeta(u_n)_j = 0\\ \: -1 &\text{if } \zeta(u_n)_j = 1 \end{cases}
	\end{align*}
	and
	\begin{align*}
	H(j) = \begin{cases} \: 1 &\text{if } u_{j} = 0\\ \: -1 &\text{if } u_{j} = 1 \end{cases}
	= \begin{cases} \: 1 &\text{if } \zeta(0)_j = 0\\ \: -1 &\text{if } \zeta(0)_j = 1 \end{cases}.
	\end{align*}
	If $u_n = 0$, then $H(n) = 1$ and
	\[
	H(qn+j) = \begin{cases} \: 1 &\text{if } \zeta(0)_j = 0\\ \: -1 &\text{if } \zeta(0)_j = 1 \end{cases} = H(j) = H(n)H(j).
	\]
	If $u_n = 1$, then $H(n) = -1$ and
	\[
	H(qn+j) = \begin{cases} \: 1 &\text{if } \zeta(1)_j = 0\\ \: -1 &\text{if } \zeta(1)_j = 1 \end{cases} = \begin{cases} \: 1 &\text{if } \zeta(0)_j = 1\\ \: -1 &\text{if } \zeta(0)_j = 0 \end{cases}= -H(j) = H(n)H(j),
	\]
	where we used the bijectivity of the substitution in the second equality.
\end{proof}
\begin{cor}\label{arithmeticBij}
	There is a strongly $q$-additive function $h:\N \longrightarrow \Z$ such that
	\[
	F(T^n\mathbf{u}) = (-1)^{h(n)}
	\]
\end{cor}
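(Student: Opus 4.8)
The statement to prove is Corollary \ref{arithmeticBij}: given the multiplicative identity $F_\pm(T^{qn+j}\mathbf{u}) = F_\pm(T^n\mathbf{u})\,F_\pm(T^j\mathbf{u})$ just established in the preceding lemma, there is a strongly $q$-additive function $h\colon\N\to\Z$ with $F_\pm(T^n\mathbf{u})=(-1)^{h(n)}$. The plan is to define $h$ as the obvious "logarithm base $-1$" of the $\{\pm1\}$-valued function $H(n):=F_\pm(T^n\mathbf{u})$, check it is $\Z$-valued, and then transport the multiplicative relation for $H$ into the additive relation defining strong $q$-additivity.

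First I would set $h(n) := \tfrac{1}{2}\bigl(1 - H(n)\bigr)$, i.e. $h(n)=0$ when $u_n=0$ and $h(n)=1$ when $u_n=1$. Since $\mathbf{u}\in\{0,1\}^\N$, this is a well-defined function $\N\to\{0,1\}\subset\Z$, and by construction $(-1)^{h(n)} = H(n) = F_\pm(T^n\mathbf{u})$, which is the desired formula. (Equivalently, one notes $u_n\in\{0,1\}$ and may simply take $h(n)=u_n$; the two descriptions agree because $F_\pm(T^n\mathbf u)=1-2u_n$.) It remains only to verify that this $h$ is strongly $q$-additive, i.e. $h(qn+j)=h(n)+h(j)$ for all $n\geq0$ and $0\leq j<q$.

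For the additivity, I would argue on the exponent level via the identity for $H$. From the lemma, $(-1)^{h(qn+j)} = H(qn+j) = H(n)H(j) = (-1)^{h(n)}(-1)^{h(j)} = (-1)^{h(n)+h(j)}$, so $h(qn+j)\equiv h(n)+h(j)\pmod 2$. This only gives the relation mod $2$; to upgrade it to an exact equality I use that $h$ takes values in $\{0,1\}$. Indeed, $h(0)=0$ because $u_0=0$ (we assumed $\zeta(0)$ starts with $0$, and $\mathbf u=\zeta^\infty(0)$, so $u_0=0$), which pins down the normalization. Then one checks by a short case analysis matching the one in the proof of the lemma: if $u_n=0$ then $h(n)=0$ and $u_{qn+j}=\zeta(u_n)_j=\zeta(0)_j=u_j$, so $h(qn+j)=h(j)=h(n)+h(j)$; if $u_n=1$ then $h(n)=1$ and, by bijectivity of $\zeta$, $\zeta(1)_j$ is the letter complementary to $\zeta(0)_j=u_j$, so $h(qn+j)=1-h(j)$... which equals $h(n)+h(j)$ only when $h(j)=0$.

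This last point is the real obstacle, and it signals that $h$ must be allowed to take values outside $\{0,1\}$ — the honest construction is recursive. Concretely: a strongly $q$-additive function is determined by its values $h(1),\dots,h(q-1)$, so I would instead \emph{define} $h$ recursively by $h(j):=\tfrac12(1-H(j))$ for $0\le j<q$ (so $h(0)=0$) and $h(qn+j):=h(n)+h(j)$ for $n\ge1$, $0\le j<q$; this is forced and well-defined by uniqueness of base-$q$ expansion, and it is strongly $q$-additive by fiat. One then proves $(-1)^{h(n)}=H(n)$ for all $n$ by strong induction on $n$: the base cases $0\le n<q$ hold by definition, and for $n\ge q$ write $n=qn'+j$ with $0\le j<q$, $n'\ge1$; then $(-1)^{h(n)}=(-1)^{h(n')}(-1)^{h(j)}=H(n')H(j)=H(qn'+j)=H(n)$, using the inductive hypothesis for $n'<n$ and $j<n$ together with the multiplicative identity of the lemma. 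Thus $h$ is strongly $q$-additive and satisfies $F_\pm(T^n\mathbf u)=(-1)^{h(n)}$, completing the proof. The main thing to get right is exactly this: one cannot read $h$ off from $\mathbf u$ pointwise as $h(n)=u_n$ (that fails additivity whenever $h(j)=1$), so the corollary genuinely requires the recursive definition, with the lemma supplying the compatibility that makes the induction close.
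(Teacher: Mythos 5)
Your proof is correct and takes essentially the same route as the paper: both first observe that the lemma's multiplicative identity yields a strongly $q$-additive function with values in $\Z/2\Z$, and then lift it to a $\Z$-valued strongly $q$-additive $h$ by choosing integer representatives of the correct parity for $h(1),\dots,h(q-1)$ and extending by the defining recursion $h(qn+j)=h(n)+h(j)$, with the compatibility $(-1)^{h(n)}=F_\pm(T^n\mathbf u)$ closing by induction. Your preliminary detour showing that the pointwise choice $h(n)=u_n$ fails additivity is a useful sanity check but is not needed for the final argument, which matches the paper's (the paper allows representatives in $\{0,\dots,2(q-1)\}$ where you take the minimal ones in $\{0,1\}$, an inessential difference).
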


\begin{proof}
	It is clear we can define a function $\tilde{h}:\N \longrightarrow \Z/2\Z$ that satisfy the strong $q$-additive property. To pass to a strongly $q$-additive function $h:\N \longrightarrow \Z$ such that for all $n\geq1$ we have $h(n) \equiv \tilde{h}(n) \;(\text{mod } 2)$, we just have to make any choice of elements in $\{0,\dots, 2(q-1)\}$ such that $h(j) \equiv \tilde{h}(j) \: (\text{mod } 2)$ for $j=1,\dots,q-1$ and define the rest the terms $h(n)$ by induction.
\end{proof}

\subsection{Twisted correlations} As before, we will consider the twisted correlations over a fixed point of the substitution and for the function $F = \mathds{1}_{[0]} - \mathds{1}_{[1]}$. By Corollary \ref{arithmeticBij}, consider $h:\N \longrightarrow \Z$ a strongly $q$-additive function such that
\[
F(T^n\mathbf{u}) = (-1)^{h(n)}.
\]
Therefore we will study the sum
\[
C^{\pm}_{q^n}(a_1,\dots,a_t,\omega, \mathbf{u}) = \sum_{k=0}^{q^n-1} (-1)^{h(k)+ h(k+a_1)+\dots+h(k+a_t)}e(k\omega).
\]
As in the case of Thue-Morse, we have the next result for the growth of twisted correlations: let $I = \chi^+_{\zeta}$ be the top Lyapunov exponent of the spectral cocycle.
\begin{theorem}\label{TwistCorrGen}
	Let $C^{\pm}_{q^n}(a_1,\dots,a_t,\omega,\mathbf{u})$ be the twisted correlation defined in Definition \ref{defTwsited}, for $F = \mathds{1}_{[0]}-\mathds{1}_{[1]}$. Then,
	\begin{itemize}
		\item if $t$ is even, there exists $B>0$ independent of $\omega$ and $t$ such that for almost every $\omega$,
		\[
		C^{\pm}_{q^n}(a_1,\dots,a_t,\omega,\mathbf{u}) = O_{a_t,\omega}\left(n^t e^{nI + B\sqrt{n\log\log(n)} } \right).
		\]
		\item if $t$ is odd, for every $\epsilon>0$ and almost every $\omega$,
		\[
		C^{\pm}_{q^n}(a_1,\dots,a_t,\omega,\mathbf{u}) = O_{a_t,\omega}(n^{t+1+\epsilon}) .
		\]
	\end{itemize}
\end{theorem}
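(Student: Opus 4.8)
The plan is to exploit the strong $q$-additivity of $h$ together with the combinatorial description of the level sets of the maps $k\mapsto h(k+a_i)-h(k)$, thereby reducing the twisted correlation to a bounded family of elementary sums --- geometric sums when $t$ is odd, and twisted Birkhoff sums of $F_\pm$ (that is, partial products of $P(e(q^j\omega))$) when $t$ is even. \textbf{Decomposition.} First I would use
\[
(-1)^{h(k)+h(k+a_1)+\dots+h(k+a_t)}=(-1)^{(t+1)h(k)}\prod_{i=1}^{t}(-1)^{h(k+a_i)-h(k)},
\]
so that the factor $(-1)^{(t+1)h(k)}$ equals $1$ when $t$ is odd and equals $F_\pm(T^k\mathbf{u})=(-1)^{h(k)}$ when $t$ is even. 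For $\mathbf{a}=(a_1,\dots,a_t)$ and $\mathbf{d}=(d_1,\dots,d_t)\in\Z^t$ put $\mathcal{S}_{\mathbf{a},\mathbf{d}}=\{k\in\N:\ h(k+a_i)-h(k)=d_i,\ 1\le i\le t\}$. Generalizing the Emme--Prikhod'ko lemma (Lemma \ref{lemmaWords}) from $s_2$ to an arbitrary strongly $q$-additive $h$, and intersecting over $i$ the $t$ resulting cylinder decompositions (keeping, in each consistent $t$-tuple of cylinders, the longest one), one obtains a finite family $\mathcal{P}_{\mathbf{a},\mathbf{d}}$ of base-$q$ words such that $k\in\mathcal{S}_{\mathbf{a},\mathbf{d}}$ if and only if the base-$q$ expansion of $k$ begins with a word of $\mathcal{P}_{\mathbf{a},\mathbf{d}}$, and (by the tree bound, Corollary \ref{TreeBoundGeneral}) $\#\mathcal{P}_{\mathbf{a},\mathbf{d}}=O_{a_t}(1)$ with all words of length at most $L_{\mathbf{a},\mathbf{d}}=O_{a_t}(1+\max_i\abs{d_i})$. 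Since $0\le k<q^n$ forces $\abs{d_i}\le\abs{h(k+a_i)}+\abs{h(k)}=O_{a_t}(n)$, only $O_{a_t}(n^t)$ tuples $\mathbf{d}$ contribute. Splitting the sum defining $C^{\pm}_{q^n}$ over $\mathbf{d}$, then over $\mathfrak{p}\in\mathcal{P}_{\mathbf{a},\mathbf{d}}$ (of length $\ell\le L_{\mathbf{a},\mathbf{d}}$, say), and writing each $k$ with prefix $\mathfrak{p}$ as $k=q^{\ell}k'+r$ with $r$ the integer of base-$q$ digits $\mathfrak{p}$ and $0\le k'<q^{n-\ell}$ (prefixes with $\ell>n$ contribute at most one term each, hence $O_{a_t}(n^t)$ altogether, harmlessly), the task reduces to bounding each inner sum, which up to a unimodular factor equals $\Sigma(\mathbf{d},\mathfrak{p})=\sum_{k'=0}^{q^{n-\ell}-1}(-1)^{(t+1)h(q^{\ell}k'+r)}e(q^{\ell}k'\omega)$.

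\textbf{The case $t$ odd.} Here $(-1)^{(t+1)h(\,\cdot\,)}\equiv1$, so $\Sigma(\mathbf{d},\mathfrak{p})=\sum_{k'=0}^{q^{n-\ell}-1}e(q^{\ell}k'\omega)$ is geometric, hence of modulus $O(\norm{q^{\ell}\omega}_{\R/\Z}^{-1})$. By Lemma \ref{LemmaLowerBound} (with the base $q$ and the given exponent $\epsilon$), for a.e.\ $\omega$ this is $O_\omega(\ell^{1+\epsilon})=O_{a_t,\omega}((1+\max_i\abs{d_i})^{1+\epsilon})$ once $\ell$ exceeds a threshold $d_0(\omega)$, while the finitely many tuples with $\ell\le d_0(\omega)$ together contribute $O_{a_t,\omega}(1)$. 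Since there are $O_{a_t}(D^{t-1})$ tuples $\mathbf{d}$ with $\max_i\abs{d_i}=D$, each with $\#\mathcal{P}_{\mathbf{a},\mathbf{d}}=O_{a_t}(1)$, summing $D^{t-1}\cdot D^{1+\epsilon}$ over $1\le D\le C_{a_t}n$ gives $\abs{C^{\pm}_{q^n}(a_1,\dots,a_t,\omega,\mathbf{u})}=O_{a_t,\omega}(n^{t+1+\epsilon})$, as claimed.

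\textbf{The case $t$ even.} Iterating the strong $q$-additivity gives $h(q^{\ell}k'+r)=h(k')+h(r)$, so $\Sigma(\mathbf{d},\mathfrak{p})$ equals, up to a unimodular factor, the twisted Birkhoff sum $S^{F_\pm}_{q^{n-\ell}}(q^{\ell}\omega,\mathbf{u})$. From the multiplicativity relation $F_\pm(T^{qn+j}\mathbf{u})=F_\pm(T^n\mathbf{u})F_\pm(T^j\mathbf{u})$ established above, one gets, for all $\eta$ and $M\ge0$,
\[
S^{F_\pm}_{q^M}(\eta,\mathbf{u})=\prod_{j=0}^{M-1}P(e(q^j\eta)),\qquad P(z)=\sum_{j=0}^{q-1}(-1)^{h(j)}z^j,
\]
$P$ being, up to sign convention, the degree-$(q-1)$ integer polynomial of Theorem \ref{mainBaake} and Lemma \ref{product}; hence $S^{F_\pm}_{q^{n-\ell}}(q^{\ell}\omega,\mathbf{u})=\prod_{j=\ell}^{n-1}P(e(q^j\omega))$. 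Applying the bounded law of iterated logarithm (Theorem \ref{thmGouezel}) to the $q$-times map $x\mapsto qx$ (mod $1$) and to $f=\log\abs{P(e(\,\cdot\,))}$, whose mean is $\mathfrak{m}(P)=I=\chi_\zeta^+\ge0$ (nonnegativity of the logarithmic Mahler measure of an integer polynomial; Theorem \ref{mainBaake}) --- exactly as in the proof of Theorem \ref{deviation}, but using the $\max_{1\le k\le n}$ form of Theorem \ref{thmGouezel} and telescoping $\sum_{j=\ell}^{n-1}=\sum_{j=0}^{n-1}-\sum_{j=0}^{\ell-1}$ --- one obtains: for a.e.\ $\omega$ there exist $n_1(\omega)$ and a constant $B>0$ depending only on $\zeta$ such that for all $n\ge n_1(\omega)$ and all $0\le\ell\le n$,
\[
\Big|\prod_{j=\ell}^{n-1}P(e(q^j\omega))\Big|\le e^{(n-\ell)I+B\sqrt{n\log\log n}}\le e^{nI+B\sqrt{n\log\log n}}.
\]
(One also uses here that each $q^{\ell}$ is Lebesgue-measure-preserving, so that the estimate is available simultaneously for all $\ell$.) Therefore each $\abs{\Sigma(\mathbf{d},\mathfrak{p})}\le e^{nI+B\sqrt{n\log\log n}}$, and summing over the $O_{a_t}(n^t)$ tuples $\mathbf{d}$ and the $O_{a_t}(1)$ prefixes per tuple yields $C^{\pm}_{q^n}(a_1,\dots,a_t,\omega,\mathbf{u})=O_{a_t,\omega}(n^t e^{nI+B\sqrt{n\log\log n}})$, with $B$ independent of $\omega$ and $t$.

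\textbf{Main difficulty.} The only genuinely new ingredient is the Decomposition step: extending the Emme--Prikhod'ko description of $\{k:s_2(k+a)-s_2(k)=d\}$ to an arbitrary strongly $q$-additive $h$ and to the simultaneous system over $a_1<\dots<a_t$, together with the uniform bounds $\#\mathcal{P}_{\mathbf{a},\mathbf{d}}=O_{a_t}(1)$ and word-lengths $O_{a_t}(1+\max_i\abs{d_i})$. This requires tracking carry propagation in base-$q$ addition and bounding how deep into the digits of $k$ the increment $h(k+a_i)-h(k)$ can reach; I would carry it out in the Appendix (Lemma \ref{lemmaWords}, Corollary \ref{TreeBoundGeneral}). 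Everything else is bookkeeping on top of two already-available analytic inputs: the Diophantine lower bound of Lemma \ref{LemmaLowerBound} (controlling the odd case, where the sign factor is locally constant so that only pure exponential sums survive) and the sub-exponential deviation estimate of Theorems \ref{thmGouezel} and \ref{deviation} (controlling the even case, where an additional twisted Birkhoff sum of $F_\pm$, i.e.\ a partial product of the $P(e(q^j\omega))$, appears) --- mirroring the Thue--Morse computations of Section 3.
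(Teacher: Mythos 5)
Your proposal is correct and follows essentially the same route as the paper's Appendix proof: decompose $C^{\pm}_{q^n}$ over the level sets of $k\mapsto(h(k+a_1)-h(k),\dots,h(k+a_t)-h(k))$ via the strongly-$q$-additive generalization of the Emme--Prikhod'ko lemma (Lemma~\ref{lemmaWords}, Corollary~\ref{TreeBoundGeneral}, Corollaries~\ref{IntersectionWords}--\ref{IntersectionBounds}), observe that the residual factor $(-1)^{(t+1)h(k)}$ is trivial for odd $t$ and equals $(-1)^{h(k)}$ for even $t$, and then bound the resulting inner sums --- geometric in the odd case, using Lemma~\ref{LemmaLowerBound}; partial products $\prod_{j=\ell}^{n-1}P(e(q^j\omega))$ in the even case, using Theorem~\ref{deviation} --- and sum over $O_{a_t}(n^t)$ tuples $\mathbf{d}$. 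The only presentational differences are minor and harmless: you split directly over all $\mathbf{d}$ instead of using the paper's ``add $\sum e(k\omega)$ (resp.\ $\sum(-1)^{h(k)}e(k\omega)$)'' trick to halve the tuples via the parity constraint $c_1+\dots+c_t\equiv_20$, and your explicit telescoping $\sum_{j=\ell}^{n-1}=\sum_{j=0}^{n-1}-\sum_{j=0}^{\ell-1}$ to control $\prod_{j=\ell}^{n-1}|P(e(q^j\omega))|$ uniformly in $\ell$ is in fact the correct way to justify the paper's displayed inequality $\prod_{j=\ell}^{n-1}|P(\omega q^j)|\le e^{nI+B\sqrt{n\log\log n}}$, whose intermediate equality as written there has a slip (the right-hand product should start at $j=\ell$, not $j=0$).
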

We will prove this result in the Appendix. Of course, we may generalize to an arbitrary point of the subshift $X_\zeta$ using the prefix-suffix decomposition.
\begin{cor}\label{TwistedCorrUnifGen}
	Let $C^{\pm}_{N}(a_1,\dots,a_t,\omega,\mathbf{x})$ be the twisted correlation over $\mathbf{x}\in X_{\zeta}$. Then
	\begin{itemize}
		\item if $t$ is even, there exists $B>0$ independent of $\omega$ and $t$ such that for almost every $\omega$,
		\[
		C^{\pm}_{N}(a_1,\dots,a_t,\omega,\mathbf{x}) = O_{a_t,\omega}\left(N^{I/\log(q)}\log(N)^{t+1} e^{B\sqrt{\log(N)\log\log\log(N)} } \right).
		\]
		\item if $t$ is odd, for every $\epsilon>0$ and almost every $\omega$,
		\[
		C^{\pm}_{N}(a_1,\dots,a_t,\omega,\mathbf{x}) = O_{a_t,\omega}(\log(N)^{t+2+\epsilon}) .
		\]
	\end{itemize}
\end{cor}
\begin{proof}
	The proof is the same of Corollary \ref{TwistedCorrUnif}.
\end{proof}

\section{Top Lyapunov exponents of factors}
In this section we will prove Theorem \ref{main}. The proof relies on the bounds for the twisted correlations found in subsection \ref{twistsec}: we will see that the twisted Birkhoff sums of the factor subshift, correspond to linear combinations of twisted correlations on the factorized system.\\

The sketch of the proof has already been shown in the background section, but here is writen down explicitly and rigorously. Let $(X_{\sigma},T)$ be an aperiodic topological factor of $(X_{\zeta},T)$ arising from the aperiodic substitution $\sigma$, where $\zeta$ is a bijective substitution on the alphabet $\A = \{0,1\}$ of constant length $q\geq2$. By Theorem \ref{DuLe}, the factor map $\pi: X_{\zeta}\longrightarrow X_{\sigma}$ is defined by a map $\widehat{\pi}$ of radius $r$, where $r$ is a computable constant depending on $X_{\sigma}$. Let $I = \chi^+_\zeta$ be the top Lyapunov exponent of the spectral cocycle for $\zeta$, being almost everywhere constant since $\zeta$ is of constant length. Consider a point $\mathbf{w}$ such that at position zero it has the block $\sigma^n(a)$, for a letter $a\in \A$. Choose any $\mathbf{x}\in \pi^{-1}(\{\mathbf{w}\})$. Then, setting $f = \mathds{1}_{[b]}$, we have
\begin{align*}
\abs{\mathscr{C}_{\sigma}(\omega,n)(a,&b)} = \abs{S^f_{\abs{\sigma^n(a)}}(\omega,\mathbf{w})}\\
&\leq \sum_{w: \widehat{\pi}(w) = b} \left|  \sum_{k=0}^{\abs{\sigma^n(a)}-1} \mathds{1}_{[w_{-r}]}(T^{k-r}\mathbf{x})\cdot\ldots\cdot\mathds{1}_{[w_{r}]}(T^{k+r}\mathbf{x}) e(k\omega)\right| \\
&\leq \sum_{w: \widehat{\pi}(w) = b} \left|  \sum_{k=0}^{\abs{\sigma^n(a)}-(r+1)} \mathds{1}_{[w_{-r}]}(T^{k}\mathbf{x})\cdot\ldots\cdot\mathds{1}_{[w_{r}]}(T^{k+2r}\mathbf{x}) e(k\omega)\right| + \Delta_r.
\end{align*}

In the last sum, there are terms depending on negative coordinates of $\mathbf{x}$, but at most on the first $r$ negative coordinates, which we leave apart in the term $\Delta_r$ since they make no difference in the asymptotic as $n$ goes to infinity.\\

Consider the next simple relations for $F(\mathbf{x}) = \mathds{1}_{[0]}(\mathbf{x}) - \mathds{1}_{[1]}(\mathbf{x})$:
\begin{align*}
\mathds{1}_{[0]}(\mathbf{x}) &= \dfrac{1}{2}\left[ 1 + F(\mathbf{x}) \right],\\
\mathds{1}_{[1]}(\mathbf{x}) &= \dfrac{1}{2}\left[ 1 - F(\mathbf{x})\right].
\end{align*}
By definition $C_N^{\pm}(a_1,\dots,a_t,\omega,\mathbf{x}) = C^F_N(a_1,\dots,a_t,\omega,\mathbf{x})$. In the last inequality above we replace each factor $\mathds{1}_{[0]}(\mathbf{x}), \mathds{1}_{[1]}(\mathbf{x})$ by $\dfrac{1}{2}\left[ 1 + F(\mathbf{x}) \right]$, $\dfrac{1}{2}\left[ 1 - F(\mathbf{x})\right]$ respectively, and develop the product, yielding a sum of twisted correlations with parameters $a_i$ belonging to non-empty subsets of $\{0,\dots,2r\}$, of the function $F$ (note the obvious abuse of notation for the parameters in the twisted correlations below and the addition of one of these terms into $\Delta_r$):
\[
\abs{\mathscr{C}_{\sigma}(\omega,n)(a,b)} \leq  \sum_{w: \widehat{\pi}(w) = b} \, \sum_{\substack{\mathcal{S} \subset \{0,\dots,2r\} \\ \mathcal{S} \neq \emptyset } }  \left| C^F_{\abs{\sigma^n(a)}-(r+1)}(\mathcal{S},\omega,\mathbf{x}) \right| + \widetilde{\Delta_{r,b}}.
\]
We may estimate each twisted correlation using Corollary \ref{TwistedCorrUnifGen}. We claim that for any $\epsilon > 0$ there exists a constant $C_{b,r,\omega}>0$ depending on $b,r,\omega$ such that
\begin{align*}
	\abs{\mathscr{C}_{\sigma}(\omega,n)(a,b)} &\leq C_{b,r,\omega} {\abs{\sigma^n(a)}}^{I/\log(q) + \epsilon} \\
	&\leq C_{b,r,\omega} (\norm{(M^T_\sigma)^n}_\infty)^{I/\log(q) + \epsilon}
\end{align*}
The second inequality is obvious by definition of the substitution matrix and the norm $\norm{\cdot}_{\infty}$. For the first one, note that by Corollary \ref{TwistedCorrUnifGen} we have that for $N = \abs{\sigma^n(a)}-(r+1)$
\[
\left| C^F_{N}(\mathcal{S},\omega,\mathbf{x}) \right| =  O_{a_t,\omega}\left(N^{I/\log(q)}\log(N)^{2r+2} e^{B\sqrt{\log(N)\log\log\log(N)}} \right),
\]
since the bound in Corollary \ref{TwistedCorrUnifGen} in the odd case dominates the one of the even case. Since for any $\epsilon>0$ we have, $\log(N)^{2r+2} = O_r(N^{\epsilon})$ and $e^{B\sqrt{\log(N)\log\log\log(N)}} = O(N^{\epsilon})$ (for our purposes, $B$ is a universal constant). Then
\begin{align*}
	N^{I/\log(q)}\log(N)^{2r+2} e^{B\sqrt{\log(N)\log\log\log(N)} } &= O_r(N^{I/\log(q)+\epsilon})\\
	& = O_r\left( {\abs{\sigma^n(a)}}^{I/\log(q) + \epsilon}  \right),
\end{align*}
which proves the claim. \\

This implies
\begin{align*}
\chi^{+}_{\sigma}(\omega) &= \limsup_{n\to\infty} \dfrac{1}{n}\log(\abs{\mathscr{C}_{\sigma}(\omega,n)(a,b)})\\ &\leq ( I/\log(q) + \epsilon)\log\left(\limsup_{n\to\infty} \, \norm{(M^T_\sigma)^n}^{1/n}_\infty\right) \\
&= \log\left(\rho(M^T_\sigma)\right)( I/\log(q) + \epsilon),
\end{align*}
i.e.,
\[
\dfrac{\chi^{+}_{\sigma}(\omega)}{\log(\rho(M_\sigma))} \leq \dfrac{I}{\log(q)} + \epsilon
\]
(note that $M^T_\sigma$ has the same spectral radius as $M_\sigma$ since they have the same eigenvalues). Since $\epsilon>0$ is arbitrary, we obtain the conclusion of Theorem \ref{main}.

\section{Appendix}
\subsection{Preliminary lemmata} Let $h:\N \longrightarrow \Z$ be a strongly $q$-additive function. As in the case of $s_2$, we consider for $a\in \N$ and $d\in\Z$ the set $\mathcal{S}_{a,d}$ of positive integer solutions $n$ to the equation
\[
h(n+a)-h(n) = d.
\]
We have the same structure for the solutions as shown by the next lemma. The proof is the same as in Lemma 2.1.1 in \cite{emme2017asymptotic}. We recall the notation introduced in subsection \ref{Secsolutions}: let $k$ be a nonnegative integer. Let $\underline{k}_q$ be the word associated to the digits of $k$ in base $q$, i.e., if $k = k_0 + \dots + k_{n-1}q^{n-1}$ then, $\underline{k}_q = k_0\dots k_{n-1}\in\{0,1, \dots,q-1\}^*$. Similarly, for a word $w=w_0\dots w_{n-1} \in  \{0,1,\dots,q-1\}^*$, let $\overline{w}^q$ be the number $w_0 + \dots + w_{n-1}q^{n-1}$.
\begin{lemma}\label{lemmaWords}
	There exists a (possibly empty) finite set of words
	\[
	\mathcal{P}_{a,d} = \{\mathfrak{p}^d_a(1),\dots,\mathfrak{p}^d_a(s)\} \subset \{0,1,\dots,q-1\}^*, 
	\]
	such that
	\[
	k\in \mathcal{S}_{a,d} \iff \underline{k}_q \in \bigcup_{i=1}^s \: [\mathfrak{p}^d_a(i)],
	\]
\end{lemma}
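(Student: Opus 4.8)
The plan is to reproduce the structure of the argument of Lemma 2.1.1 in \cite{emme2017asymptotic}, replacing the sum-of-digits function $s_2$ by an arbitrary strongly $q$-additive function $h$ and base $2$ by base $q$. The key observation is that strong $q$-additivity turns the carry mechanism of addition into a \emph{finite-memory} process when read on base-$q$ digit strings. First I would write any nonnegative integer $k$ in base $q$ as $\underline{k}_q = k_0 k_1 \dots k_{L-1}$ and, fixing $a$ with base-$q$ expansion $a_0 \dots a_{\ell-1}$, track the addition $k \mapsto k+a$ digit by digit from the least significant position. At each position $i$ the pair (current digit $k_i$, incoming carry $c_i \in \{0,1\}$) determines the output digit and the outgoing carry $c_{i+1}$; once $i \geq \ell$ the incoming value $a_i$ is zero, so the process is governed entirely by whether a carry is still propagating. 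By strong $q$-additivity, $h(k+a) - h(k)$ is exactly the sum over all positions $i$ of the local increments $h\big((k_i + a_i + c_i) \bmod q\big) - h(k_i)$ (with the convention $a_i = 0$ for $i \geq \ell$), because $h$ of a string is the sum of $h$ of its digits. Hence $h(k+a)-h(k) = d$ is a condition that an automaton reading $\underline{k}_q$ can check: the automaton has finitely many states (carry bit together with an accumulated partial sum of increments, the latter taking values in a bounded range since $h$ is bounded on $\{0,\dots,q-1\}$ and the carry can only propagate through a bounded prefix before the running total stabilizes).

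The main step is to make precise that the set of base-$q$ strings accepted is a \emph{finite union of cylinders}, i.e. that membership depends only on a bounded prefix of $\underline{k}_q$. This is where the trivial vanishing cases recalled before Lemma \ref{EPwords} enter: once the position index exceeds $\ceil{\log_q(a)}$ the digit $a_i$ is $0$, so either the carry has already died (and then no further digit of $k$ changes the value of $h(k+a)-h(k)$) or the carry persists only as long as $k_i = q-1$, which contributes a fixed increment $h(0) - h(q-1)$ per such digit; in the latter situation the running difference is strictly monotone in the number of trailing $(q-1)$'s, so only finitely many lengths of such runs can yield the target value $d$. Consequently the value of $h(k+a)-h(k)$ is determined by reading $\underline{k}_q$ up to position $2\ceil{\log_q(a)} + \abs{d}$ or so — precisely the bound that Lemma \ref{TreeBound} (and its generalization, to be proved via Corollary \ref{TreeBoundGeneral}) will quantify — after which the condition is decided. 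Collecting all prefixes of that bounded length for which the automaton accepts gives the finite set $\mathcal{P}_{a,d}$, and $\underline{k}_q \in \bigcup_i [\mathfrak p_a^d(i)]$ iff $k \in \mathcal{S}_{a,d}$.

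I expect the main obstacle to be purely bookkeeping rather than conceptual: one must handle carefully the interaction between the carry and the digits of $k$ \emph{within} the prefix of length $\ceil{\log_q(a)}$ (where both $a_i$ and the carry are active), versus \emph{beyond} it (where only a run of $(q-1)$'s keeps the carry alive), and verify that in all cases the dependence on $\underline{k}_q$ factors through a bounded prefix. A minor subtlety is that $h$, unlike $s_2$, may take negative values and need not be monotone on $\{0,\dots,q-1\}$, so one cannot argue "the difference only increases"; instead one argues that beyond position $\ceil{\log_q(a)}$ each additional carried digit changes the difference by the \emph{fixed} quantity $h(0)-h(q-1)$ (which is nonzero unless $h(q-1)=0$, a degenerate case treated separately), so the set of admissible run-lengths is an interval of bounded size, again forcing a bounded prefix. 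Once this is in place the statement is immediate, and I would simply remark that the argument is verbatim that of \cite[Lemma 2.1.1]{emme2017asymptotic} with $(2, s_2)$ replaced by $(q, h)$, as the cited lemma only used of $s_2$ its strong $q$-additivity.
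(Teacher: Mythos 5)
Your carry-tracking argument is correct and is a genuinely different organization from the paper's. The paper proceeds by induction on $a$: it handles the base case $a=1$ explicitly (splitting on whether $k$ ends, in least-significant-first order, with a digit in $\{0,\dots,q-2\}$ or with a run of $(q-1)$'s of some length $p\ge1$, which yields the equation $h(r+1)-h(r)-p\,h(q-1)=d$), and then for $m\in(a,qa]$ it peels off the least significant digit of $k$, writing $k = qk'+j$ and $m+j=ql+r$ so that $k\in\mathcal{S}_{m,d}$ iff $k'\in\mathcal{S}_{l,\,d+h(j)-h(r)}$ with $l\le a$. You instead read the full base-$q$ expansion of $k$ once, tracking the carry and the running increment for general $a$, and split the string at position $\ceil{\log_q a}$. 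Both arguments rest on the same mechanism — beyond the digits of $a$, the carry survives only across a run of $(q-1)$'s, each step contributing the fixed amount $-h(q-1)$ — but your version makes the eventual prefix-length bound (the content of Lemma \ref{TreeBound} and Corollary \ref{TreeBoundGeneral}) visible directly, at the cost of bookkeeping within the first $\ceil{\log_q a}$ positions that the induction handles automatically.

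The one point you should not defer is the case $h(q-1)=0$. You call it "a degenerate case treated separately," but in fact the lemma as stated is false there. Take $q=3$, $h(0)=h(2)=0$, $h(1)=1$, $a=1$, $d=1$. Then $\mathcal{S}_{1,1}$ consists of all $k$ whose first digit different from $2$ (reading $\underline{k}_3$ from the left, i.e.\ from the least significant digit, after padding with zeros) equals $0$; in particular $3^n-1\in\mathcal{S}_{1,1}$ for every $n$, while appending a $1$ to any run $2^j$ gives an integer outside $\mathcal{S}_{1,1}$. Hence no finite set of prefixes can describe $\mathcal{S}_{1,1}$: it is a regular set but not a finite union of cylinders, which is exactly the distinction you flagged. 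The paper's proof carries the same hidden hypothesis — "$p$ is bounded for fixed $d$" after the displayed equation requires $h(q-1)\neq 0$. This is harmless for the application, since in Corollary \ref{arithmeticBij} the lift $h$ of $\tilde h$ is chosen with each $h(j)$ free within a parity class in $\{0,\dots,2(q-1)\}$, so one may always take $h(q-1)\in\{1,2\}$; but the lemma for an arbitrary strongly $q$-additive $h$ needs $h(q-1)\neq 0$ as an explicit hypothesis, and your write-up should add it rather than promise a separate treatment.
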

\begin{proof}
	Let us proceed by induction on $a$: if $a = 1$ and $k \in \mathcal{S}_{1,d}$ we have two cases:
	\begin{itemize}
		\item $k = ql + r$ for some $l\geq 0$ and $0\leq r < q-1$. Then $h(k+1) = h(l) + h(r+1)$ and since $h(k) = h(l) + h(r)$, the equation is equivalent to
		\[
		h(r+1) - h(r) = d.
		\]
		There is only a finite number of prefixes (least significant digits), in fact of length 1, that satisfy this equation.\\
		\item $k = lq^{p+1} + rq^p + (q-1)q^{p-1} + \dots +(q-1)$ for some $p\geq1$ and $0\leq r < q-1$. Then, $h(k+1) = h(l) + h(r+1)$ and since $h(k) = h(l) + h(r) + ph(q-1)$, the equation is equivalent to
		\[
		h(r+1) - h(r) - ph(q-1) = d
		\] 
		In particular, $p$ is bounded for fixed $d$ and again there is only a finite number of prefixes that satisfy this equation.
	\end{itemize}
	Suppose this is true for $a\geq 1$ and let $m$ be an integer between $a+1$ and $qa$. Let $k\equiv j \: (\text{mod } q)$, then 
	\[
	h(k) = h\left(\dfrac{k-j}{q}\right) + h(j).
	\]
	Note that $a+1\leq m+j =: ql+r \leq qa+j$, and so $l\leq a$. Then 
	\[
	h(k+m) = h\left( \dfrac{k-j}{q} + l\right) + h(r).
	\]
	That is, $k = qk' + j$ and $k'\in \mathcal{S}_{l,d+h(j)-h(r)}$.
\end{proof}
An analog of Lemma \ref{TreeBound} holds in this broader case. We will keep using $\mathcal{P}_{a,d}$ to denote the set of words in $\mathcal{P}_{a,d}$ (if it is non-empty) after adding suffixes in such a way the length of every word is of equal length to the longest one.
\begin{lemma}\label{TreeBoundGeneral}
	For all $a \in \N$ and $d\in\Z$, there exist positive constants $\mathfrak{C}_a$ and $\mathfrak{D}_a$ such that the set $\mathcal{P}_{a,d}$, satisfies
	\begin{itemize}
		\item $\#\mathcal{P}_{a,d} \leq \mathfrak{C}_a$.
		\item $\norm{\mathcal{P}_{a,d}} = \mathfrak{D}_a + \abs{d}$.
	\end{itemize}
\end{lemma}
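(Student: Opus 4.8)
The plan is to mimic the proof of Lemma~\ref{TreeBound} step by step, using Lemma~\ref{lemmaWords} in place of Lemma~\ref{EPwords} and reading off from its proof the quantitative data that were left implicit in the sum-of-digits case. Set $\kappa:=\max_{1\le i\le q-1}\abs{h(i)}$ and $\ell_a:=1+\ceil{\log_2 a}$, so that the reduction $m\mapsto\ceil{m/q}$ reaches $1$ from $a$ in at most $\ell_a$ steps; the constants $\mathfrak{C}_a,\mathfrak{D}_a$ will depend only on $a$, $q$ and the finitely many values $h(1),\dots,h(q-1)$. First I would handle the base case $a=1$. The proof of Lemma~\ref{lemmaWords} exhibits every word of $\mathcal{P}_{1,d}$ either as a single digit $r$ with $h(r+1)-h(r)=d$, or as a word $(q-1)^p\,r$ with $h(r+1)-h(r)-p\,h(q-1)=d$; since $h(q-1)\ne0$, in the second case $p$ is determined by $(r,d)$ and $p+1\le 1+(\abs{d}+2\kappa)/\abs{h(q-1)}$. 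Hence $\#\mathcal{P}_{1,d}$ is bounded by a constant $c_1=c_1(q,h)$ and every word of $\mathcal{P}_{1,d}$ has length at most $\mathfrak{D}_1+\abs{d}$ for a suitable $\mathfrak{D}_1=\mathfrak{D}_1(q,h)$.

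Next I would iterate the reduction step of the proof of Lemma~\ref{lemmaWords}: for $m\in\{a+1,\dots,qa\}$ and a residue $j\in\{0,\dots,q-1\}$, writing $m+j=ql+r$ with $0\le r<q$ (so $l=\floor{(m+j)/q}\le\ceil{m/q}<m$ when $m\ge2$), one has $k\in\mathcal{S}_{m,d}$ with $k\equiv j\pmod q$ if and only if $(k-j)/q\in\mathcal{S}_{l,\,d+h(j)-h(r)}$. Consequently each word of $\mathcal{P}_{a,d}$ has the form $j_1\cdots j_s\,w$, where $j_1\cdots j_s$ (with $s\le\ell_a$) records the successive residues along the reduction down to the base case and $w$ is a word of some $\mathcal{P}_{1,d'}$, the second index having drifted by at most $2\ell_a\kappa$, that is $\abs{d'}\le\abs{d}+2\ell_a\kappa$. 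Together with the base case this gives that every word of $\mathcal{P}_{a,d}$ has length at most $\ell_a+\mathfrak{D}_1+\abs{d}+2\ell_a\kappa=:\mathfrak{D}_a+\abs{d}$; padding every word by arbitrary suffixes up to this common length yields $\norm{\mathcal{P}_{a,d}}=\mathfrak{D}_a+\abs{d}$, the second bullet. For the first bullet, there are at most $q^{\ell_a+1}c_1$ words before padding, and a short case distinction (whether $\abs{d}>2\ell_a\kappa$ or not) shows that all of them already have length within $O(\ell_a)$ of the maximal one; hence the padding multiplies the count by at most $q^{O(\ell_a)}$, giving $\#\mathcal{P}_{a,d}\le\mathfrak{C}_a$ with $\mathfrak{C}_a$ polynomial in $a$. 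If $\mathcal{P}_{a,d}=\emptyset$ both statements are vacuous.

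The step I expect to be the main obstacle is controlling the reduction through its ``self-referential'' branch $l=\ceil{m/q}$, which is the analogue for general $a$ of the run of digits $q-1$ appearing in the base case and is precisely where $h(q-1)\ne0$ is needed. The key observation making the recursion well-founded is that even along this branch the index decreases strictly, from $m$ to at most $\ceil{m/q}<m$ when $m\ge2$; once this is in place the reduction terminates after $O(\log_2 a)$ steps, both the length bound and the cardinality bound follow as above, and the remaining estimates are the same bookkeeping already carried out for Lemma~\ref{TreeBound}.
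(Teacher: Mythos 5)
Your proof follows the paper's approach: analyze the base case $a=1$ (reading off the two sub-cases in the proof of Lemma~\ref{lemmaWords}), then iterate the reduction $m\mapsto\floor{(m+j)/q}$ for $O(\log a)$ steps while tracking the drift of the second index. You are considerably more explicit about the constants than the paper, and you correctly flag the hypothesis $h(q-1)\neq 0$ that makes $\mathcal{P}_{1,d}$ finite; the paper leaves this tacit, and it does require choosing the lift $h$ of Corollary~\ref{arithmeticBij} so that $h(q-1)\neq 0$ (unavoidable when $\tilde{h}(q-1)$ is even, in which case one must take $h(q-1)\geq 2$).

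There is, however, a bookkeeping slip you should fix. A base-case word $(q-1)^p r\in\mathcal{P}_{1,d'}$ has $p=(h(r+1)-h(r)-d')/h(q-1)$, so its length grows like $\abs{d'}/\abs{h(q-1)}$, not like $\abs{d'}$. Hence the longest \emph{unpadded} word of $\mathcal{P}_{a,d}$ has length roughly $\abs{d}/\abs{h(q-1)}+O(\ell_a)$. If you pad up to your declared target $\mathfrak{D}_a+\abs{d}$ with $\mathfrak{D}_a=\ell_a+\mathfrak{D}_1+2\ell_a\kappa$, then whenever $\abs{h(q-1)}>1$ you are appending $\Omega(\abs{d})$ extra symbols, and the assertion that ``padding multiplies the count by at most $q^{O(\ell_a)}$'' breaks down — so the case distinction you invoke cannot rescue the first bullet as written. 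The correct move is to pad only to the length of the actual longest word: the spread of unpadded lengths is indeed $O(\ell_a)$ (as your drift estimate shows), so the cardinality bound $\mathfrak{C}_a$ survives, while the common length becomes $\mathfrak{D}_a+\ceil{\abs{d}/\abs{h(q-1)}}$ rather than $\mathfrak{D}_a+\abs{d}$. This changes nothing downstream in the Appendix, which only uses that $\norm{\mathcal{P}_{a,d}}$ is linear in $\abs{d}$ and hence $O(n)$, so the slip is cosmetic. Note that the paper's own proof is equally loose here — it too asserts the equality $\norm{\mathcal{P}_{a,d}}=\mathfrak{D}_a+\abs{d}$ while in fact only establishing an upper bound and a bounded spread — so your proposal is of comparable rigor; it simply makes the gap more visible by being more explicit.
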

\begin{proof}
	For the first claim, note that in the proof of Lemma \ref{lemmaWords} if $a=1$ there is a fixed number of prefixes (or least significant digits) for the solutions of the equations (considering them as words). For $m\geq1$ between $a+1$ and $qa$, we reduce the problem by dividing $m$ by $q$. Repeating this procedure at most the length of $\underline{a}_q$ times, we obtain a finite number of prefixes and the number of them only depends on $a$, that is, we bound $\#\mathcal{P}_{a,d}$ by a positive constant $\mathfrak{C}_a$.\\
	
	For the second claim, note that in the proof of Lemma \ref{lemmaWords} if $a=1$ the length of the prefixes of the solutions is bounded by some fixed constant plus the absolute value of $d$. For an integer $m\geq1$ between $a+1$ and $qa$, we divide again by $q$ to reduce the problem, and we obtain that the length of the possible prefixes is bounded by a contant depending on $a$ plus $\abs{d}$.\\
	
	Finally, to obtain the equality of the second claim, we simply complete the words of $\mathcal{P}_{a,d}$ with all possible suffixes, to make all words of the same length. Since the possible suffixes only depends on $a$, we may simply redefine $\mathfrak{C}_a$ in such a way that the cardinal $\mathcal{P}_{a,d}$ is bounded by this constant.	
\end{proof}
Both Lemma \ref{lemmaWords} and Lemma \ref{TreeBoundGeneral} have a multiple versions.
\begin{cor}\label{IntersectionWords}
	Let $1\leq a_1<\ldots<a_t$ be positive integers and $d_1, \ldots d_t\in\Z$. Then
	\[
	k\in\bigcap_j\mathcal{S}_{a_j,d_j} \iff \underline{k}_q \in \bigcup_i \: [\mathfrak{p}^{d_1,\dots,d_t}_{a_1,\dots,a_t }(i)],
	\]
	for some words $\mathfrak{p}^{d_1,\dots,d_t}_{a_1,\dots,a_t }(i) \in \bigcap_j \mathcal{P}_{a_j,d_j}=: \mathcal{P}^{d_1,\dots,d_t}_{a_1,\dots,a_t}$, where $i = 1,\dots,\#\mathcal{P}^{d_1,\dots,d_t}_{a_1,\dots,a_t}$ if this set is non-empty.
\end{cor}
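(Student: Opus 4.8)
The plan is to deduce this directly from Lemma \ref{lemmaWords}, applied separately to each pair $(a_j,d_j)$, together with a normalization step that puts all the prefix sets on equal footing. By Lemma \ref{lemmaWords}, for each $j\in\{1,\dots,t\}$ the set $\mathcal{S}_{a_j,d_j}$ is described by a finite (possibly empty) family of prefixes $\mathcal{P}_{a_j,d_j}\subset\{0,\dots,q-1\}^*$: one has $k\in\mathcal{S}_{a_j,d_j}$ iff $\underline{k}_q$ starts with one of the words of $\mathcal{P}_{a_j,d_j}$. So the first step is simply to record that membership in $\mathcal{S}_{a_j,d_j}$ depends only on finitely many of the least-significant base-$q$ digits of $k$.

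Second, I would normalize word lengths. Using the convention fixed after Lemma \ref{TreeBoundGeneral}, I may assume each $\mathcal{P}_{a_j,d_j}$ consists of words of a single length $L_j$; then, by appending to each such word every suffix needed to bring it up to the common length $L:=\max_{1\le j\le t}L_j$, I may assume that all the sets $\mathcal{P}_{a_1,d_1},\dots,\mathcal{P}_{a_t,d_t}$ lie inside $\{0,\dots,q-1\}^L$. The point to verify here is that this padding leaves the cylinder set $\bigcup_{\mathfrak{p}\in\mathcal{P}_{a_j,d_j}}[\mathfrak{p}]$ unchanged, which is immediate since $[\mathfrak{p}]$ is the disjoint union of $[\mathfrak{p}\mathfrak{s}]$ over all suffixes $\mathfrak{s}$ of length $L-\abs{\mathfrak{p}}$.

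Third, once all words have the common length $L$, the cylinders $[\mathfrak{w}]$ with $\abs{\mathfrak{w}}=L$ are pairwise disjoint, and $\underline{k}_q\in[\mathfrak{w}]$ holds precisely when $\mathfrak{w}$ is the length-$L$ prefix of $\underline{k}_q$ (padding $\underline{k}_q$ on the right with zeros if $k<q^L$, which does not change $k$). Hence $k\in\bigcap_j\mathcal{S}_{a_j,d_j}$ iff that length-$L$ prefix lies in every $\mathcal{P}_{a_j,d_j}$, i.e.\ in $\bigcap_j\mathcal{P}_{a_j,d_j}=:\mathcal{P}^{d_1,\dots,d_t}_{a_1,\dots,a_t}$; enumerating the elements of this finite intersection as $\mathfrak{p}^{d_1,\dots,d_t}_{a_1,\dots,a_t}(i)$ yields exactly the asserted equivalence.

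The argument is essentially bookkeeping, so there is no genuine obstacle. The only step I would write out with care is the length normalization across the different indices $j$: one must check that the padding is carried out consistently and that the small-$k$ case (fewer than $L$ base-$q$ digits) is absorbed by the trailing-zero convention, so that ``$\underline{k}_q\in\bigcup_i[\mathfrak{p}^{d_1,\dots,d_t}_{a_1,\dots,a_t}(i)]$'' genuinely amounts to membership of a common length-$L$ prefix of $k$ in the intersection set $\mathcal{P}^{d_1,\dots,d_t}_{a_1,\dots,a_t}$.
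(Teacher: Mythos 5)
Your proof is correct and matches the paper's (implicit) argument: the paper treats this corollary as an immediate consequence of Lemma \ref{lemmaWords}, with the length-normalization of the prefix sets left implicit in the definition $\mathcal{P}^{d_1,\dots,d_t}_{a_1,\dots,a_t}:=\bigcap_j\mathcal{P}_{a_j,d_j}$ and in the remark following Corollary \ref{IntersectionBounds}. You have simply made the bookkeeping explicit, including the two points that do need care (padding to a common length preserves the cylinder unions, and small $k$ is handled by trailing zeros).
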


\begin{cor}\label{IntersectionBounds}
	By completing shorter words of $\mathcal{P}^{d_1,\dots,d_t}_{a_1,\dots,a_t}$ (if this set is non-empty) there exist positive constants $\mathfrak{C}_{a_t},\mathfrak{D}_{a_t}$, such that
	\begin{itemize}
		\item $\#\mathcal{P}^{d_1,\dots,d_t}_{a_1,\dots,a_t} \leq \mathfrak{C}_{a_t}$.\\
		\item $\norm{\mathcal{P}^{d_1,\dots,d_t}_{a_1,\dots,a_t}} =  \mathfrak{D}_{a_t} + \abs{d_{j^*}}$, for some $1\leq j^*\leq t$.
	\end{itemize}
\end{cor}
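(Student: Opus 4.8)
The plan is to read off both bounds from the cylinder-set identity of Corollary \ref{IntersectionWords} together with the uniform estimates of Lemma \ref{TreeBoundGeneral}, the only real content being an elementary observation about how finite unions of cylinders of fixed length behave under intersection. If $\mathcal{P}^{d_1,\dots,d_t}_{a_1,\dots,a_t}$ is empty the statement is vacuous, so assume it is not. First I would replace, for each $j$, the family $\mathcal{P}_{a_j,d_j}$ by its padded version from Lemma \ref{TreeBoundGeneral}: this does not change the cylinder set $\bigcup_i[\mathfrak{p}^{d_j}_{a_j}(i)]$, but now that set is a union of at most $\mathfrak{C}_{a_j}$ cylinders, all of the common length $\ell_j:=\mathfrak{D}_{a_j}+\abs{d_j}$. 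By Corollary \ref{IntersectionWords} we then have $\bigcup_i[\mathfrak{p}^{d_1,\dots,d_t}_{a_1,\dots,a_t}(i)]=\bigcap_{j=1}^t\bigcup_i[\mathfrak{p}^{d_j}_{a_j}(i)]$.

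Next, let $j_0$ be an index with $\ell_{j_0}=\max_{1\le j\le t}\ell_j$. Every length-$\ell_{j_0}$ cylinder $[v]$ appearing in $\bigcup_i[\mathfrak{p}^{d_{j_0}}_{a_{j_0}}(i)]$ has length at least $\ell_j$ for each $j$, so for each $j$ either $[v]\subseteq\bigcup_i[\mathfrak{p}^{d_j}_{a_j}(i)]$ or $[v]$ is disjoint from it; hence $\bigcap_{j=1}^t\bigcup_i[\mathfrak{p}^{d_j}_{a_j}(i)]$ equals the union of those $[v]$ that lie in all $t$ factors. So, before any further modification, $\mathcal{P}^{d_1,\dots,d_t}_{a_1,\dots,a_t}$ may be taken to consist of at most $\mathfrak{C}_{a_{j_0}}$ words, all of the single length $\ell_{j_0}=\mathfrak{D}_{a_{j_0}}+\abs{d_{j_0}}$. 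Since $a_{j_0}\le a_t$, set $\mathfrak{D}_{a_t}:=\max_{1\le a\le a_t}\mathfrak{D}_a$ and pad every such word from length $\mathfrak{D}_{a_{j_0}}+\abs{d_{j_0}}$ to length $\mathfrak{D}_{a_t}+\abs{d_{j_0}}$; each word grows by $\mathfrak{D}_{a_t}-\mathfrak{D}_{a_{j_0}}\le\mathfrak{D}_{a_t}$, a quantity depending only on $a_t$, so the cardinality is multiplied by at most $q^{\mathfrak{D}_{a_t}}$. Taking $\mathfrak{C}_{a_t}:=q^{\mathfrak{D}_{a_t}}\max_{1\le a\le a_t}\mathfrak{C}_a$ and $j^*:=j_0$ then gives $\#\mathcal{P}^{d_1,\dots,d_t}_{a_1,\dots,a_t}\le\mathfrak{C}_{a_t}$ and $\norm{\mathcal{P}^{d_1,\dots,d_t}_{a_1,\dots,a_t}}=\mathfrak{D}_{a_t}+\abs{d_{j^*}}$, with both constants depending only on $a_t$.

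The one point that genuinely requires attention is ensuring that no padding step depends on $(d_1,\dots,d_t)$: this is precisely why $j^*$ must be chosen to maximize $\ell_j=\mathfrak{D}_{a_j}+\abs{d_j}$, so that the residual refinement length $\mathfrak{D}_{a_t}-\mathfrak{D}_{a_{j^*}}$ is bounded purely in terms of $a_t$; choosing instead, say, the index maximizing $\abs{d_j}$ would force a refinement of possibly unbounded length and destroy the cardinality bound. Everything else is immediate from the remark that, among unions of cylinders of fixed but possibly different lengths, the intersection is governed entirely by the family with the longest cylinders.
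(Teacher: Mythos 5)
Your argument is correct and amounts to a careful write-up of exactly what the paper asserts in its one-sentence remark following the corollary: the intersection $\bigcap_j\bigcup_i[\mathfrak{p}^{d_j}_{a_j}(i)]$ is a union of cylinders taken from the family $\mathcal{P}_{a_{j_0},d_{j_0}}$ whose words are longest, and the residual padding needed to normalize the constant to a single $\mathfrak{D}_{a_t}$ is bounded by $\mathfrak{D}_{a_t}-\mathfrak{D}_{a_{j_0}}$, which depends only on $a_t$. The one misstep is in your closing remark: if one instead picked the index $j^{**}$ maximizing $\abs{d_j}$, the refinement length would \emph{not} become unbounded, since $\ell_{j_0}\geq\ell_{j^{**}}$ gives $\mathfrak{D}_{a_t}+\abs{d_{j^{**}}}-\ell_{j_0}\leq\mathfrak{D}_{a_t}-\mathfrak{D}_{a_{j^{**}}}\leq\mathfrak{D}_{a_t}$; the actual failure mode for that choice is that $\mathfrak{D}_{a_t}+\abs{d_{j^{**}}}$ may be strictly smaller than $\ell_{j_0}$, so no padding can reach the prescribed target length. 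Your choice $j^*=j_0$ avoids this, and the main argument is unaffected.
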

Both corollaries can be checked rapidly since all of these words correspond to elements of some $\mathcal{P}_{a_j,d_j}$ which has the longest words among all these sets, or it is empty for the choice of $a_1,\dots,a_t,d_1,\dots,d_t$.

\subsection{Upper bounds for twisted correlations}
Let us begin with the case when $t$ is odd. First note that the equation $h(k+a)-h(k) = d$ may have solutions in $\{0,\dots,q^n-1\}$ only if $d\in\{-2qn,\dots,h(a)\}$. For every $j=1,\dots,t$, consider $\mathcal{D}_j = \{-2qn,\dots, h(a_j)\}$ and the decomposition $\mathcal{D}_j = \mathcal{D}_j^0 \cup \mathcal{D}_j^1$ in even and odd elements respectively.\\

Since the geometric series of the exponentials is bounded by a constant depending on $\omega$, we consider equivalently $\widetilde{C}_{q^n}(a_1,\dots,a_t,\omega,\mathbf{u})= C^\pm_{q^n}(a_1,\dots,a_t,\omega,\mathbf{u}) + \sum_{k=0}^{q^n-1} e(k\omega)$. Note also that since $t$ is odd, we can change the exponent of $-1$ from $h(k) + h(k+a_1) + \dots + h(k+a_t)$ to 
\[
h(k+a_1)-h(k) + h(k+a_2)-h(k) \dots + h(k+a_t)-h(k).
\]
Using these facts we have
\begin{align*}
\widetilde{C}_{q^n}(a_1,\dots,a_t,\omega,\mathbf{u}) &= \sum_{k=0}^{q^n-1} \left(1+(-1)^{h(k) + h(k+a_1) + \dots + h(k+a_t)} \right) e(k\omega) \\
&= \sum_{k=0}^{q^n-1} \left(1+\prod_{j=1}^t(-1)^{h(k+a_j)-h(k)} \right) e(k\omega) \\
&= 2\sum_{\substack{d_j \in \mathcal{D}^{c_j}_j \\ c_1+\dots+c_t\equiv_2 0 }} \, \sum_{\substack{ k\in\cap_j\mathcal{S}_{a_j,d_j} \\ 0\leq k\leq q^n-1}} e(k\omega).
\end{align*}
The first sum in the last equality sums over all elements of each $\mathcal{D}^{c_1}_1,\dots,\mathcal{D}^{c_t}_t$, for all combinations of variables $c_1,\dots,c_t \in \{0,1\}$ satisfying $c_1+\dots+c_t\equiv_2 0$.\\

Applying Corollary \ref{IntersectionWords} and via simple calculations involving geometric series, we obtain
\begin{align*}
&\widetilde{C}_{q^n}(a_1,\dots,a_t,\omega,\mathbf{u})\\ 
&= 2\sum_{\substack{d_j \in \mathcal{D}^{c_j}_j \\ c_1+\dots+c_t\equiv_2 0 }} \, \sum_{\substack{ k\in\cap_j\mathcal{S}_{a_j,d_j} \\ 0\leq k\leq q^n-1}} e(k\omega). \\
&= 2\sum_{\substack{d_j \in \mathcal{D}^{c_j}_j \\ c_1+\dots+c_t\equiv_2 0 }} \, \sum_{i=1}^{\#\mathcal{P}^{d_1,\dots,d_t}_{a_1,\dots,a_t}} \:\: \sum_{k=0}^{q^{n-\norm{\mathcal{P}^{d_1,\dots,d_t}_{a_1,\dots,a_t}}}-1} e\left(\left(q^{\norm{\mathcal{P}^{d_1,\dots,d_t}_{a_1,\dots,a_t}}}k + \overline{\mathfrak{p}^{d_1,\dots,d_t}_{a_1,\dots,a_t }(i)}^q\right)\omega\right) \\
&= 2(e\left(q^{n}\omega \right)-1) \sum_{\substack{d_j \in \mathcal{D}^{c_j}_j \\ c_1+\dots+c_t\equiv_2 0 }} \, \sum_{i=1}^{\#\mathcal{P}^{d_1,\dots,d_t}_{a_1,\dots,a_t}}  \dfrac{ e\left( \overline{\mathfrak{p}^{d_1,\dots,d_t}_{a_1,\dots,a_t }(i)}^q\omega\right)}{e\left(q^{\norm{\mathcal{P}^{d_1,\dots,d_t}_{a_1,\dots,a_t}}}\omega \right)-1}.
\end{align*}
Taking the absolute value and by Corollary \ref{IntersectionBounds},
\begin{align*}
\abs{\widetilde{C}_{q^n}(a_1,\dots,a_t,\omega,\mathbf{u})}
&\leq 2\mathfrak{C}_{a_t} \sum_{\substack{d_j \in \mathcal{D}^{c_j}_j \\ c_1+\dots+c_t\equiv_2 0 }} \dfrac{1}{\abs{\sin\left(\pi q^{\norm{\mathcal{P}^{d_1,\dots,d_t}_{a_1,\dots,a_t}}+1 }\omega \right)}} \\
&\leq \pi \mathfrak{C}_{a_t} \sum_{\substack{d_j \in \mathcal{D}^{c_j}_j \\ c_1+\dots+c_t\equiv_2 0 }} \norm{ q^{\norm{\mathcal{P}^{d_1,\dots,d_t}_{a_1,\dots,a_t}}+1}\omega }^{-1}_{\R/\Z}.
\end{align*}

Since $\norm{\mathcal{P}^{d_1,\dots,d_t}_{a_1,\dots,a_t}} = \norm{\mathcal{P}_{a_{j^*},d_{j^*}}} = \mathfrak{D}_{a_t} + \abs{d_{j^*}}$ for some $j^*$, we obtain
\[
\abs{\widetilde{C}_{q^n}(a_1,\dots,a_t,\omega)} \leq \pi \mathfrak{C}_{a_t} \sum_{\substack{d_j \in \mathcal{D}^{c_j}_j \\ c_1+\dots+c_t\equiv_2 0 } } \norm{q^{\mathfrak{D}_{a_t} + \abs{d_{j^*}}+1} \omega}^{-1}_{\R/\Z}.
\]
To estimate each term in this sum we argue as in subsection \ref{twistsec}: by Lemma \ref{LemmaLowerBound}, for almost every $\omega$ there exists $d_0(\omega)$ such that
\[
\norm{q^n\omega}_{\R/\Z} \geq \dfrac{1}{n^{1+\epsilon}},
\]
for all $n\geq d_0(\omega)$. In particular, 
\begin{align*}
\abs{\widetilde{C}_{q^n}(a_1,\dots,a_t,\omega,\mathbf{u})} &\leq O_{a_t,\omega}(1) + \mathfrak{C}_{a_t} \sum_{l=1}^t \sum_{\substack{d_j \in \mathcal{D}^{c_j}_j \\ c_1+\dots+c_t\equiv_2 0 \\ j^* = l \\ \abs{d_{j^*}} \geq d_0(\omega) } } \left(\mathfrak{D}_{a_t} + \abs{d_{l}}+1\right)^{1+\epsilon} \\
&= O_{a_t,\omega}(n^{t+1+\epsilon}).
\end{align*}

The even case is worked in a similar way: consider 
\[
\widehat{C}_{q^n}(a_1,\dots,a_t,\omega,\mathbf{u}) = C^{\pm}_{q^n}(a_1,\dots,a_t,\omega,\mathbf{u}) + \sum_{k=0}^{q^n-1} (-1)^{h(k)}e(k\omega).
\]
Since $\sum_{k=0}^{q^n-1} (-1)^{h(k)}e(k\omega) = O_{\omega}\left(e^{nI+B\sqrt{n\log\log(n)}}\right)$ almost surely by Theorem \ref{deviation} and we want a weaker estimate, it is enough to prove the required upper bound for $\widehat{C}_{q^n}(a_1,\dots,a_t,\omega,\mathbf{u})$. Note also that since $t$ is even, we can change the exponent of $-1$ from $h(k+a_1) + \dots + h(k+a_t)$ to 
\[
h(k+a_1)-h(k) + h(k+a_2)-h(k) \dots + h(k+a_t)-h(k),
\]
to justify the third equality below (and also that $(-1)^{h(k)} = e(h(k)/2)$).
\begin{align*}
&\widehat{C}_{q^n}(a_1,\dots,a_t,\omega,\mathbf{u}) \\
&= \sum_{k=0}^{q^n-1} \left((-1)^{h(k)}+(-1)^{h(k) + h(k+a_1) + \dots + h(k+a_t)} \right) e(k\omega) \\
&= \sum_{k=0}^{q^n-1} (-1)^{h(k)} \left( 1+(-1)^{h(k+a_1) + \dots + h(k+a_t)}  \right) e(k\omega) \\
&= \sum_{k=0}^{q^n-1} \left(1+\prod_{j=1}^t(-1)^{h(k+a_j)-h(k)} \right) e\left(\dfrac{h(k)}{2}+k\omega\right) \\
&= 2\sum_{\substack{d_j \in \mathcal{D}^{c_j}_j \\ c_1+\dots+c_t\equiv_2 0 }} \, \sum_{\substack{ k\in\cap_j\mathcal{S}_{a_j,d_j} \\ 0\leq k\leq q^n-1}} e\left(\dfrac{h(k)}{2}+k\omega\right) \\
&= 2\sum_{\substack{d_j \in \mathcal{D}^{c_j}_j \\ c_1+\dots+c_t\equiv_2 0 }} \, \sum_{i=1}^{\#\mathcal{P}^{d_1,\dots,d_t}_{a_1,\dots,a_t}} \:\: \sum_{k=0}^{q^{n-\norm{\mathcal{P}^{d_1,\dots,d_t}_{a_1,\dots,a_t}}}-1} \mathfrak{A}_k, \\
\end{align*}
where
\[
\mathfrak{A}_k = e\left(\dfrac{h\left(q^{\norm{\mathcal{P}^{d_1,\dots,d_t}_{a_1,\dots,a_t}}}k + \overline{\mathfrak{p}^{d_1,\dots,d_t}_{a_1,\dots,a_t }(i)}^q\right)}{2} + \left(q^{\norm{\mathcal{P}^{d_1,\dots,d_t}_{a_1,\dots,a_t}}}k + \overline{\mathfrak{p}^{d_1,\dots,d_t}_{a_1,\dots,a_t }(i)}^q\right)\omega\right).
\]

Let us factorize the terms independent of $k$ in the exponentials $\mathfrak{A}_k$, and continue from the last line.
\begin{align*}
&\widehat{C}_{q^n}(a_1,\dots,a_t,\omega,\mathbf{u}) \\
&= 2\sum_{\substack{d_j \in \mathcal{D}^{c_j}_j \\ c_1+\dots+c_t\equiv_2 0 }} \sum_{i=1}^{\#\mathcal{P}^{d_1,\dots,d_t}_{a_1,\dots,a_t}} \underbrace{e\left(\dfrac{h\left(\overline{\mathfrak{p}^{d_1,\dots,d_t}_{a_1,\dots,a_t }(i)}^q\right)}{2} + \overline{\mathfrak{p}^{d_1,\dots,d_t}_{a_1,\dots,a_t }(i)}^q\omega\right) }_{=\mathfrak{B}^{d_1,\dots,d_t}_{a_1,\dots,a_t}(i)} \\
& \qquad \qquad \qquad \qquad \qquad \sum_{k=0}^{q^{n-\norm{\mathcal{P}^{d_1,\dots,d_t}_{a_1,\dots,a_t}}}-1} e\left(\dfrac{h\left(q^{\norm{\mathcal{P}^{d_1,\dots,d_t}_{a_1,\dots,a_t}}}k\right)}{2} + k q^{\norm{\mathcal{P}^{d_1,\dots,d_t}_{a_1,\dots,a_t}}} \omega \right) \\
&= 2\sum_{\substack{d_j \in \mathcal{D}^{c_j}_j \\ c_1+\dots+c_t\equiv_2 0 }} \sum_{i=1}^{\#\mathcal{P}^{d_1,\dots,d_t}_{a_1,\dots,a_t}} \mathfrak{B}^{d_1,\dots,d_t}_{a_1,\dots,a_t}(i) \sum_{k=0}^{q^{n-\norm{\mathcal{P}^{d_1,\dots,d_t}_{a_1,\dots,a_t}}}-1} e\left(\dfrac{h\left(k\right)}{2} + kq^{\norm{\mathcal{P}^{d_1,\dots,d_t}_{a_1,\dots,a_t}}}\omega\right)\\
&= 2\sum_{\substack{d_j \in \mathcal{D}^{c_j}_j \\ c_1+\dots+c_t\equiv_2 0 }} \sum_{i=1}^{\#\mathcal{P}^{d_1,\dots,d_t}_{a_1,\dots,a_t}} \mathfrak{B}^{d_1,\dots,d_t}_{a_1,\dots,a_t}(i) \prod_{j=0}^{n-\norm{\mathcal{P}^{d_1,\dots,d_t}_{a_1,\dots,a_t}}-1} P(\omega q^{\norm{\mathcal{P}^{d_1,\dots,d_t}_{a_1,\dots,a_t}}+j})
\end{align*}
(we use Lemma \ref{product} in the last equality). Taking the absolute value,
\begin{align*}
&\abs{\widehat{C}_{q^n}(a_1,\dots,a_t,\omega,\mathbf{u})} \\
&\leq 2 \sum_{\substack{d_j \in \mathcal{D}^{c_j}_j \\ c_1+\dots+c_t\equiv_2 0 }} \sum_{i=1}^{\#\mathcal{P}^{d_1,\dots,d_t}_{a_1,\dots,a_t}}  \prod_{j=0}^{n-\norm{\mathcal{P}^{d_1,\dots,d_t}_{a_1,\dots,a_t}}-1} \left| P (\omega q^{\norm{\mathcal{P}^{d_1,\dots,d_t}_{a_1,\dots,a_t}}+j}) \right| \\
&\leq 2\mathfrak{C}_{a_t} \sum_{\substack{d_j \in \mathcal{D}^{c_j}_j \\ c_1+\dots+c_t\equiv_2 0 }} \prod_{j=\norm{\mathcal{P}^{d_1,\dots,d_t}_{a_1,\dots,a_t}}}^{n-1} \abs{P(\omega q^j)} \\
&= 2\mathfrak{C}_{a_t} \sum_{l=1}^t \sum_{\substack{d_j \in \mathcal{D}^{c_j}_j \\ c_1+\dots+c_t\equiv_2 0 \\ j^* =\: l } } \prod_{j=\mathfrak{D}_{a_t} + \abs{d_l}}^{n-1} \abs{P(\omega q^j)}.
\end{align*}
By Theorem \ref{deviation} there exists a universal $B>0$ such that for almost all $\omega$,
\[
\prod_{j=0}^{n-1} \left|\dfrac{P(\omega q^j)}{e^I}\right| \leq e^{ B\sqrt{n\log\log(n)}},
\]
for all $n\geq n_0(\omega)$, for some $n_0(\omega)$. In particular,
\[
\prod_{j=l}^{n-1} \left|P(\omega q^j)\right| = e^{(n-l)I}\prod_{j=0}^{n-1} \left|\dfrac{P(\omega q^j)}{e^I}\right| \leq e^{nI + B\sqrt{n\log\log(n)}}.
\]
This is, for all $n\geq0$
\[
\prod_{j=l}^{n-1} \left|P(\omega q^j)\right| \leq O_{\omega}\left(e^{nI + B\sqrt{n\log\log(n)}} \right).
\]
This finishes the proof of Theorem \ref{TwistCorr}.
\bibliographystyle{amsalpha}
\bibliography{bibliografia.bib}
\end{document}